\documentclass[a4paper,12pt]{article}

\usepackage[top=3.0cm,bottom=3.0cm,left=2.25cm,right=2.25cm]{geometry}%{geometry} 表示可以自定义页面设置
\usepackage{amsmath,amsthm,mathrsfs,graphicx,amsfonts}%{mathrsfs}用于产生一种数学用的花体字，数学公式宏包
\usepackage{bm}%处理数学公式中的黑斜体，公式中的粗体字符（用命令\boldsymbol）
\usepackage{cases}%\begin{numcases}{|x|=}x,&for$x\geq0$\\-x,&for$x<0$\end{numcases}
\usepackage[english]{babel}%算法
\usepackage{amsmath,amsthm}
\usepackage{amsfonts}
\usepackage{latexsym}
\usepackage{graphicx}%图形宏包
\usepackage{txfonts}
\usepackage[numbers,sort&compress]{natbib}
\usepackage[natural]{xcolor}
\usepackage{rotating}
\usepackage{mathtools}
\usepackage{enumitem}

\newtheorem{lem}{Lemma}[section]
\newtheorem{thm}[lem]{Theorem}

\newtheorem{prob}[lem]{Problem}
\newtheorem{claim}[lem]{\indent Claim}
\newtheorem{conj}[lem]{Conjecture}

\newtheorem{Observation}{Observation}
\begin{document}
\title{Paths of length five with equal-degree endpoints}
\date{}
%%%%%%%%%%%%%%%%%%%%%%%%%%%%%%%%%%%%%%%%%%%%%%%%%%%%
\author{Zhen Liu\footnote{Email: 1552580575@qq.com}, ~Qinghou Zeng\footnote{Research supported by National Key R\&D Program of China (Grant No. 2023YFA1010202) and National Natural Science Foundation of China (Grant No. 12371342). Email: zengqh@fzu.edu.cn (Corresponding
author)}\\
{\small Center for Discrete Mathematics, Fuzhou University, Fujian, 350003, China}}
%%%%%%%%%%%%%%%%%%%%%%%%%%%%%%%%%%%%%%%%%%%%%%%%%%%%%

%%%%%%%%%%%%%%%%%%%%%%%%%%%%%%%%%%%%%%%%%%%%%%%%%%%
\maketitle

\maketitle
\begin{abstract}
Addressing a question posed by Erd\H{o}s and Hajnal, Chen and Ma proved that, for all $n \ge 600$, the complete bipartite graph $K_{n,n+1}$ is the unique graph on $2n+1$ vertices with at least $n^2+n$ edges that contains no two vertices of equal degree joined by a path of length three. In this paper, we extend this result and show that, for all $n \ge 11$, $K_{n,n+1}$ is the unique $(2n+1)$-vertex graph with at least $n^2+n$ edges that avoids two equal-degree vertices joined by a path of length five. This confirms the very next case of a general conjecture of Chen and Ma on paths of odd length with equal-degree endpoints. 
\end{abstract}

\section{Introduction}\label{Intro}
A fundamental result in graph theory asserts that every graph with at least two vertices contains two vertices of the same degree. It is natural to ask whether these vertices can be required to satisfy additional structural constraints under suitable edge-density conditions. In 1991, Erd\H{o}s and Hajnal \cite{Erd1991} posed the following specific question (see also Problem \#816 in Thomas Bloom's collection of Erd\H{o}s problems \cite{Erd816}): 

\begin{prob} [Erd\H{o}s \cite{Erd1991}]\label{EH}
Is it true that every \((2n + 1)\)-vertex graph with \(n^{2}+n + 1\)
edges contains two vertices of the same degree that are joined by a path of length three?
\end{prob}

The complete bipartite graph $K_{n,n+1}$ shows that the edge bound in Problem~\ref{EH} is sharp, as it contains no such pairs of vertices. Recently, Chen and Ma \cite{CM2025} provided a stronger result as follows.

\begin{thm}[Chen and Ma \cite{CM2025}]\label{CM26}
Let $n\geq 600$. The unique \((2n + 1)\)-vertex graph with at least \(n^{2}+n\) edges,
that does not contain two vertices of the same degree joined by a path of length three, is the
complete bipartite graph \(K_{n,n + 1}\).
\end{thm}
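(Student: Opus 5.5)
The plan is to prove Theorem~\ref{CM26} (Chen and Ma's theorem) by a stability argument followed by exact cleanup. Let $G$ have $2n+1$ vertices and $e(G)\ge n^2+n$, and suppose no two vertices of equal degree are joined by a path of length three. For each degree value $d$, let $V_d$ be the set of vertices of degree $d$. The first structural observation is that no path of length three in $G$ may join two vertices of $V_d$. Two consequences follow. If $V_d$ spans an edge $uv$, then $u$ and $v$ can have no further neighbours outside a very restricted set, since any $x\in N(u)\setminus\{v\}$ and $y\in N(v)\setminus\{u\}$ with $x\ne y$ give a path $x u v y$, which is not itself forbidden. More usefully, for $u,w\in V_d$ there is no edge between $N(u)\setminus\{w\}$ and $N(w)\setminus\{u\}$ using distinct endpoints. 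Hence each class $V_d$ is ``sparse at distance three'', and the neighbourhoods of same-degree vertices are mutually non-adjacent.

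The first main step is counting. Since $\sum_d |V_d|\,d = 2e(G)\ge 2n^2+2n$ and degrees are at most $2n$, the density forces most vertices to have degree about $n$. We then argue that a large class $V_d$ with $|V_d|=k$ forces many non-edges. Summing over pairs $u,w\in V_d$, the sets $N(u)$ and $N(w)$ have no edges between them (apart from $O(1)$ exceptions), which gives about $\sum d^2$ missing edges. Combining this with the edge count and using that $K_{n,n+1}$ is extremal for triangle-free-like constraints, we aim to show first that $G$ has no triangles through high-degree vertices, and then, via a Füredi-type stability argument, that $G$ is $o(n^2)$-close to $K_{n,n+1}$. More concretely, we would show that $G$ has a bipartition $A\cup B$ with $|A|\approx|B|\approx n$ and $o(n^2)$ edges inside the parts.

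The second step is exact cleanup. Take a max-cut partition $A\cup B$. Vertices with many neighbours on their own side are ``bad'', and we show there are few of them. Every good vertex in $A$ has degree close to $|B|$, and every good vertex in $B$ has degree close to $|A|$. Now pigeonhole: in a near-complete bipartite graph, any two vertices on the same side are joined by a path of length three (alternating $A$–$B$–$A$ would give length two; we take $a\,b\,a'\,b'$ for opposite sides, or use an internal edge for the same side). Any two vertices on opposite sides $a\in A$, $b\in B$ are joined by a path $a\,b'\,a'\,b$ of length three. So a good vertex of $A$ and a good vertex of $B$ must have distinct degrees. In addition, within a side, an edge inside $A$ creates length-three paths between same-side vertices of equal degree. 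The constraints therefore force all degrees on the $A$-side to differ from all degrees on the $B$-side. They also force no edges inside the parts, since an internal edge $aa'$ plus near-completeness gives a path $a\,a'\,b\,a''$ joining vertices of $A$, and many of these share a degree. With no internal edges, $e(G)\le |A||B|\le n(n+1)$, and equality forces $G=K_{n,n+1}$.

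The main obstacle is the stability step: turning the local ``no $P_4$ between equal-degree vertices'' condition into global near-bipartiteness. The condition only involves vertices of equal degree, and degrees could a priori be spread so that each class is small. I would first try bounding the number of distinct degrees near $n$. Vertices whose degrees lie within a window of width $\epsilon n$ around $n$ can number at most about $2\epsilon n+1$ distinct values, so many classes have size at least $1/(2\epsilon)$. Applying the non-adjacency of neighbourhoods within each such class would then yield $\Omega(n^2)$ missing edges unless the neighbourhoods align into two sides. This is where the large threshold $n\ge 600$ enters.
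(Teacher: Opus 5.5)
\textbf{There is a genuine gap.} The stability step, which you yourself flag as the main obstacle, is where the whole proof lives, and the heuristics you offer for it do not work.

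First, $2e(G)\ge 2n^2+2n$ does not force most degrees to be near $n$. The antiregular graph on $2n+1$ vertices (the connected threshold graph obtained by alternately adding isolated and dominating vertices) has degrees $1,2,\dots,2n$ plus one extra vertex of degree $n$. It therefore has exactly $n^2+n$ edges. So a completely spread-out degree sequence sits exactly at the threshold and is excluded only by the path condition. Your window-and-pigeonhole argument presupposes the concentration it is meant to produce, so it is circular.

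Second, the count of ``about $\sum d^2$ missing edges'' over pairs $u,w\in V_d$ overcounts. Pairs with equal or overlapping neighbourhoods force the same non-edges: in $K_{n,n+1}$ every pair on the $n$-side forces the same $\binom{n+1}{2}$ non-edges. Moreover, the total number of non-edges is at most $\binom{2n+1}{2}-(n^2+n)=n^2$.

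Third, a F\"uredi-type stability theorem needs $G$ to be (nearly) triangle-free, and neither the hypothesis nor your sketch supplies this. The phrase ``neighbourhoods align into two sides'' is a restatement of the theorem, not a step toward it.

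The missing idea is to let the degree sequence absorb the spread-out case rather than fight it. The paper does not reprove this theorem, but this is the method it adapts from Chen and Ma for $\ell=5$. Let $\beta$ be the largest degree occurring at least twice. All degrees above $\beta$ are then distinct, and $2e(G)$ is bounded by the optimisation in Lemma~\ref{lambda}. The path condition is used only locally, in two places:
\begin{itemize}
\item On the two vertices of degree $\beta$, with Observation~\ref{1} supplying common neighbours, to show $\beta\le n+1$ with equality only for $K_{n,n+1}$ (cf.\ Lemma~\ref{beta 1/2}).
\item On equal-degree neighbours of the maximum-degree vertex, through a bound playing the role of Lemma~\ref{dudv}, to show that $\beta$ is close to $\Delta$ or $\Delta\le n+O(1)$ (cf.\ Lemma~\ref{beta n+2}). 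For length three the analogue is easy: if $d(u)=d(v)$ and $w\in N(u)\cap N(v)$, then $N(u)\cap N(w)\subseteq\{v\}$, hence $d(w)\le |V(G)|-d(u)+1$.
\end{itemize}
A final degree count then finishes, with no stability argument at all.

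Your cleanup also has a separate hole. The path $a\,a'\,b\,a''$ yields a contradiction only if some $a''$ satisfies $d(a'')=d(a)$. Internal edges whose endpoints have degrees unique on their side, and different from every degree on the other side, are therefore untouched and need their own degree-sum argument. Note too that in a bipartite graph two vertices on the same side are joined only by paths of even length. That is precisely why $K_{n,n+1}$ is admissible, so the first sentence of your pigeonhole step is misstated.
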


%This problem is also listed as Problem \# 816 in Thomas Bloom's collection of Erd\H {o}s of problems \cite{Erd816}.
Subsequently, Liu and Zeng \cite{LZ2025} extended this result to all $n \ge 2$, thereby completely resolving the Erd\H{o}s–Hajnal problem.

Motivated by these developments, Chen and Ma \cite{CM2025} initiated the study of paths of arbitrary length. For positive integers $\ell$ and $n$, they introduced the function $p_\ell(n)$, defined as the maximum number of edges in an $n$-vertex graph containing no two vertices of equal degree joined by a path of length $\ell$. Based on this definition, they 
%determined \(p_{\ell}(n)\) for each $\ell\in\{1,2,3\}$, and 
proposed the following conjecture.

%\begin{prob}[Chen and Ma \cite{CM2025}]\label{Prob-CM}
%Determine the exact value of \(p_\ell(2n)\) for all even \(\ell\) %and sufficiently large \(n\).
%\end{prob}
\begin{conj}[Chen and Ma \cite{CM2025}]\label{Conj-Odd}
For any odd integer \(\ell \geq 3\) and sufficiently large \(n\),
\[p_\ell(2n+1) = n^2 + n.\]
\end{conj}
Theorem \ref{CM26} implies Conjecture \ref{Conj-Odd} for $\ell = 3$. In this paper, we verify Conjecture \ref{Conj-Odd} for $\ell = 5$ and establish the following theorem.

\begin{thm}\label{LZ}
Let \(n \ge 11\) be an integer. The unique \((2n+1)\)-vertex graph with at least \(n^2+n\) edges that contains no two vertices of the same degree joined by a path of length five is the complete bipartite graph \(K_{n,n+1}\). Consequently, $p_5(2n+1) = n^2 + n$ for \(n \ge 11\).
\end{thm}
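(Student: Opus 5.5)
We argue by contradiction. Let $G$ be a $(2n+1)$-vertex graph with $e(G)\ge n^2+n$, $n\ge 11$, and with no two equal-degree vertices joined by a path of length five. We aim to show $G\cong K_{n,n+1}$. First we check that $K_{n,n+1}$ itself is admissible. Every path of length five alternates sides, so its endpoints lie on opposite sides. The two sides have degrees $n+1$ and $n$, which differ. Hence $p_5(2n+1)\ge n^2+n$, and the consequence follows once uniqueness is proved.

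\textbf{Step 1: minimum degree reduction.} If some vertex $v$ has $d(v)\le n$, delete it. The graph $G-v$ has $2n$ vertices and at least $n^2$ edges. Also, deleting $v$ changes the degrees of its neighbours, so equal-degree pairs in $G-v$ need not be equal in $G$. For this reason we will avoid naive induction. Instead we prove directly, from the density, that $G$ has huge internal connectivity.

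Since $e(G)\ge n^2+n$ exceeds the Turán bound $\lfloor (2n+1)^2/4\rfloor=n^2+n$ only when the graph is not bipartite-extremal, we split into two cases. Either $G$ contains a triangle, or $G$ is triangle-free with $n^2+n$ edges, which forces $G=K_{n,n+1}$ by Turán/Mantel. So assume $G$ contains triangles and is not $K_{n,n+1}$.

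\textbf{Step 2: many paths of length five.} The key lemma we aim for is the following. In a graph with $2n+1$ vertices and at least $n^2+n$ edges, except for a small exceptional set $S$ of low-degree vertices (say $d(v)\le n/2$), every two vertices of $V\setminus S$ are joined by a path of length five.

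To prove this, take $u,w$ of large degree. Their neighbourhoods have size about $n$ or more. Using the edge count, the graph $G[V\setminus\{u,w\}]$ is dense enough that we can find $x\in N(u)$, $y\in N(w)$ and a path $x\,a\,b\,y$ of length three avoiding $u,w$. For the odd-length path we need one of two things. Either $u$ and $w$ are "on opposite sides" in a near-bipartite structure, or a triangle is used to fix parity. Here the odd cycle guaranteed by Step 1 is what allows us to switch parity. Thus all non-exceptional vertices must have pairwise distinct degrees.

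\textbf{Step 3: counting degrees.} The vertices outside $S$ have distinct degrees, all lying between $n/2$ and $2n$. So there are at most about $3n/2$ of them, and $|S|\ge n/2$ roughly. We then sum degrees. Let $d_1<d_2<\dots<d_k$ be the distinct degrees of $V\setminus S$, each at most $2n$. Then
\[2e(G)\le \sum_{i=0}^{k-1}(2n-i) + |S|\cdot \tfrac{n}{2}.\]
We compare this with $2n^2+2n$ and optimise over $k$ to get a contradiction for $n\ge 11$. This requires the threshold defining $S$ to be tuned. If the crude bound fails, we refine it. We allow $S$ to have at most a few vertices of each degree, since vertices of equal degree in $S$ must not be joined by five-paths either. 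We also iterate Step 2 inside $S$, as low-degree vertices still typically have neighbours in the dense core.

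\textbf{Main obstacle.} The hardest part will be Step 2, in particular the parity issue. Graphs very close to $K_{n,n+1}$, for example $K_{n,n+1}$ with a few edges moved inside one side, have almost all five-paths joining opposite sides. Here degrees can coincide only via the few perturbed vertices. I would handle this by a stability argument. First show that $G$ is within $o(n^2)$, or explicitly $cn$, edges of $K_{n,n+1}$ with parts $A,B$. Then any edge inside a part yields five-paths between same-side vertices of nearly equal degree, and a pigeonhole argument on the degrees within $A$ (or $B$) produces a forbidden pair. Making all constants explicit enough to reach $n\ge 11$ will need careful case analysis on small configurations.
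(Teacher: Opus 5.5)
Your check that $K_{n,n+1}$ is admissible is correct, and so is your Mantel reduction: a triangle-free graph on $2n+1$ vertices with at least $n^2+n$ edges must be $K_{n,n+1}$. The rest does not go through.

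\textbf{Step 2 is asserted, not proved, and is false as stated.} $K_{n,n+1}$ has $n^2+n$ edges and minimum degree $n$, yet no two vertices on the same side are joined by a path of length five. Knowing that a triangle exists somewhere does not fix parity. A path of length five is simple and has only six vertices, so an odd cycle helps only if it sits close to both endpoints.

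\textbf{Step 3 fails even if Step 2 is granted.} With threshold $n/2$ (take $n$ even), the best bound your count gives is
\[
\sum_{j=n/2+1}^{2n} j+\Bigl(\frac n2+1\Bigr)\frac n2=2n^2+\frac{n^2}{8}+\frac{5n}{4},
\]
which is larger than $2n^2+2n$ for every $n\ge 7$. In general, if degrees above $t$ are distinct and all others are at most $t$, the degree sum can reach $2n^2+n+t(t+1)/2$. To contradict $e(G)\ge n^2+n$ you therefore need $t(t+1)/2<n$, i.e.\ $t$ of order $\sqrt{2n}$.

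\textbf{At that threshold a density-based Step 2 is false, even with triangles present.} Glue a clique $K_m$ to $K_{p,p}$ at a single vertex, with $m+2p-1=2n+1$. For example, $n=30$, $m=42$, $p=10$ gives $961\ge 930$ edges; in general $p\approx n/3$ is possible. The cut vertex forces every path between two same-side vertices of $K_{p,p}$ (other than the glued vertex) to stay inside the bipartite block. So two vertices of degree $p>\sqrt{2n}$ are joined by no path of length five. Your proposed stability step is equally unsupported, since density alone does not put $G$ near $K_{n,n+1}$ (the example above, or a clique plus isolated vertices). It would have to come from the forbidden-path hypothesis, which is the whole difficulty.

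\textbf{The missing idea is a local argument around one fixed equal-degree pair, not a global connectivity lemma.} The paper proves that any common neighbour $w$ of two vertices $u,v$ with $d(u)=d(v)$ satisfies $d(w)\le 2n+3-d(u)+\mathbf{1}_{uv}$. It then works with $\beta$, the largest degree shared by two vertices.
\begin{enumerate}
\item A case analysis of $A_u,A_v,B_{uv},D_{uv}$ for a pair of degree $\beta$ gives $\beta\le n+1$, with equality only for $K_{n,n+1}$.
\item Applying the common-neighbour lemma at the unique vertex of degree $\Delta$ shows that its neighbours of degree at least $2n-\Delta+5$ have pairwise distinct degrees. An optimisation over admissible degree sequences then gives $\beta\ge\Delta-3$ or $\Delta\le n+3$.
\end{enumerate}
Hence, outside $K_{n,n+1}$, the degrees above $n$ are distinct and at most $n+3$, so $2e(G)\le 2n^2+n+6<2n^2+2n$.

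This argument allows repeated degrees all the way up to $n$ and instead caps $\Delta$. A count that caps only the repeated degrees needs the cap near $\sqrt{2n}$, and a density-based connectivity lemma like your Step 2 cannot deliver that.
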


%Our method can also be extended to the case of even numbers of vertices, yielding the following theorem.
%\begin{thm}\label{even}
%Let $n\ge 19$. The unique \((2n)\)-vertex graph with at least \(n^{2}-1\) edges
%that contains no two vertices of the same degree joined by a path of length five is the
%complete bipartite graph \(K_{n-1,n + 1}\).
%\end{thm}
%Since the proof is essentially identical to that of Theorem~\ref{LZ}, we leave it to the appendix to avoid redundancy.

\noindent\textbf{Notation}. Throughout the paper, we use standard graph theory notation. For any graph \(G\), \(V(G)\) denotes the vertex set of \(G\), and \(E(G)\) denotes the edge set of \(G\). For any \(v\in V(G)\), we denote by \(N_{G}(v)\) the set of \emph{neighbors} of \(v\) in \(G\), and by \(d_G(v)\) the \emph{degree} of \(v\) in \(G\). For any \(S\subseteq V(G)\), let \(G[S]\) denote the induced subgraph of \(G\) on \(S\), and let \(e(G[S])\) denote the number of edges in \(E(G[S])\). We will drop the reference to \(G\) when there is no danger of confusion.

\section{Technical lemmas}
In this section, we present some useful observations and lemmas,  all of which play crucial roles in the proof of our main theorem. 

Let $G$ be a graph. Suppose that $u$ and $v$ are two vertices of $G$, and let $\mathbf{1}_{uv}=1$ if $uv\in E(G)$ and $\mathbf{1}_{uv}=0$ otherwise. Set $B_{uv} := N(u) \cap N(v)$,
$A_u := N(u)\setminus(\{v\} \cup B_{uv})$, $ A_v := N(v)\setminus(\{u\} \cup B_{uv})$ and $D_{uv} :=V(G)\setminus(N(u)\cup N(v)\cup\{u,v\})$. We first state the following simple but frequently used observation.
\begin{Observation}\label{1}
Let $G$ be a graph. For two distinct vertices $u$ and $v$ in $G$, we have
$$|B_{uv}| = d(u) + d(v) + |D_{uv}| - |V(G)| +2\left(1-\mathbf{1}_{uv}\right).$$
\end{Observation}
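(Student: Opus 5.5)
The identity follows from a single count: partition $V(G)$ into disjoint pieces and compare $|V(G)|$ with $d(u)+d(v)$, using inclusion--exclusion on $N(u)$ and $N(v)$.

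First I will check that
\[
V(G)=\{u,v\}\,\cup\, (N(u)\cup N(v))\setminus\{u,v\}\,\cup\, D_{uv}
\]
is a disjoint union. Neither $u$ nor $v$ lies in $B_{uv}$, since there are no loops. By definition, $D_{uv}$ is everything outside $N(u)\cup N(v)\cup\{u,v\}$. Hence
\[
|V(G)| = 2 + |(N(u)\cup N(v))\setminus\{u,v\}| + |D_{uv}|.
\]

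Next I will compute $|(N(u)\cup N(v))\setminus\{u,v\}|$, splitting on whether $uv$ is an edge.
\begin{itemize}
\item If $uv\notin E(G)$, then $u,v\notin N(u)\cup N(v)$. Inclusion--exclusion gives the size $d(u)+d(v)-|B_{uv}|$.
\item If $uv\in E(G)$, then $v\in N(u)$ and $u\in N(v)$, while $u,v\notin B_{uv}$. So $|N(u)\cup N(v)| = d(u)+d(v)-|B_{uv}|$, and removing $u$ and $v$ leaves $d(u)+d(v)-|B_{uv}|-2$.
\end{itemize}
In both cases the size equals $d(u)+d(v)-|B_{uv}|-2\mathbf{1}_{uv}$. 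Equivalently, $N(u)\cup N(v)\setminus\{u,v\}=A_u\cup A_v\cup B_{uv}$, and $d(u)=|A_u|+|B_{uv}|+\mathbf{1}_{uv}$, with the symmetric identity for $v$.

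Substituting into the first count gives
\[
|V(G)| = 2 + d(u)+d(v)-|B_{uv}|-2\mathbf{1}_{uv}+|D_{uv}|.
\]
Solving for $|B_{uv}|$ yields the claimed formula
\[
|B_{uv}| = d(u)+d(v)+|D_{uv}|-|V(G)|+2(1-\mathbf{1}_{uv}).
\]

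There is no real obstacle here. The only care needed is the bookkeeping of $u$ and $v$ themselves, which the indicator $\mathbf{1}_{uv}$ tracks.
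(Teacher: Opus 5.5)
Your proof is correct and is essentially the paper's argument. Both partition $V(G)$ into $\{u,v\}$, $(N(u)\cup N(v))\setminus\{u,v\}=A_u\cup A_v\cup B_{uv}$, and $D_{uv}$, then count the middle part from the degrees with the $\mathbf{1}_{uv}$ correction; your inclusion--exclusion on $N(u)\cup N(v)$ just repackages the paper's identity $|A_u|+|A_v|=d(u)+d(v)-2|B_{uv}|-2\mathbf{1}_{uv}$.
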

\begin{proof}
Since $V(G) = \{u, v\} \cup A_u \cup A_v \cup B_{uv} \cup D_{uv}$, it follows that
\begin{align*}
|B_{uv}|
&= |V(G)| - 2 - |A_u| - |A_v| - |D_{uv}| \\
&= |V(G)| - 2 - \bigl(d(u) + d(v) - 2|B_{uv}| - 2\cdot\mathbf{1}_{uv}\bigr) - |D_{uv}| \\
&= 2|B_{uv}|-d(u) - d(v) -|D_{uv}|+|V(G)| -  2(1 - \mathbf{1}_{uv}).
\end{align*}
This implies the desired result.
\end{proof}

The next lemma serves as a key ingredient in the two core parts (see Lemmas \ref{beta 1/2} and \ref{beta n+2} in Section \ref{Main-Sec}) of the proof of Theorem \ref{LZ}.
\begin{lem}\label{dudv}
Let $G$ be a graph. Suppose that \(G\) contains no two vertices of equal degree joined by a path of length five. Let \(u\) and \(v\) be two distinct vertices of \(G\) with \(d(u) = d(v)\). Then for any vertex \(w \in N(u) \cap N(v)\), we have 
$$d(w) \le |V(G)| - d(u) + 2+\mathbf{1}_{uv}.$$
\end{lem}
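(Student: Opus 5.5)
The plan is to argue by contradiction. Assume $d(w)\ge |V(G)|-d(u)+3+\mathbf{1}_{uv}$, and use this to build a path of length five from $u$ to $v$ through $w$. This contradicts the hypothesis, since $d(u)=d(v)$. Write $N=|V(G)|$, $S=N(w)\setminus\{u,v\}$, so that $|S|=d(w)-2$.

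\textbf{Step 1: large common neighbourhoods.} Set $X=N(u)\cap S$ and $Y=N(v)\cap S$. The sets $N(u)\setminus\{w,v\}$ and $S$ both lie in $V(G)\setminus\{u,v,w\}$, which has $N-3$ vertices. Hence
$$|X|\ \ge\ (d(u)-1-\mathbf{1}_{uv})+(d(w)-2)-(N-3)\ \ge\ 3,$$
and the same bound holds for $|Y|$, because $d(v)=d(u)$. The term $\mathbf{1}_{uv}$ in the statement is exactly what absorbs the possible loss of $v$ from $N(u)$. So $u$ and $v$ each have at least three neighbours inside $N(w)$ other than $u,v$.

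\textbf{Step 2: closing a path of length five.} Two shapes of path are available:
$$u\,-\,a\,-\,c\,-\,w\,-\,b\,-\,v \qquad\text{and}\qquad u\,-\,a\,-\,w\,-\,c\,-\,b\,-\,v .$$
In the first, $a\in X$ and $b\in Y$ are distinct, and $c\in S$ is a common neighbour of $a$ and $w$ distinct from $a$ and $b$. The second is the symmetric version. So it suffices to find one edge $ac$ with $a\in X$ and $c\in S\setminus\{a,b\}$, or one edge $bc$ with $b\in Y$ and $c\in S\setminus\{a,b\}$. The fact that $|X|,|Y|\ge 3$ gives room to pick $a\neq b$ and to avoid the finitely many forbidden coincidences.

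\textbf{Main obstacle.} The hard case is when no such edge exists, so that $X\cup Y$ has no neighbours inside $S$ apart from the forbidden ones. In that case I would switch to $u$ or $v$ playing the role of the middle vertex. For example, if $uv\in E$, or if $X\cap Y\neq\emptyset$, the path $u-a-w-a'-b-v$ can be tried, with $a'\in X$ and $b\in Y\cap N(a')$. Otherwise I would use vertices of $X\cap Y=B_{uv}\cap S$ together with Observation~\ref{1} to force a short detour. The counting for this degenerate configuration, and pinning down exactly where the additive constant $2+\mathbf{1}_{uv}$ is sharp, is the step I expect to need the most care. I would treat $uv\in E$ and $uv\notin E$ separately there.
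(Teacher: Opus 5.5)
\textbf{Verdict: there is a genuine gap.} Your Steps 1 and 2 are correct; they are weaker forms of the paper's first two claims. The gap is the overall plan of always producing a $u$--$v$ path of length five. In the degenerate configuration such a path need not exist at all, so no amount of care in that case can close the argument.

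\emph{First example.} Take $u,v,w$ and independent sets $X=\{x_1,\dots,x_k\}$, $Y=\{y_1,\dots,y_k\}$ with $k\ge 3$. Join $u$ to $X\cup\{w\}$, $v$ to $Y\cup\{w\}$, and $w$ to $X\cup Y$. Then $|V(G)|=2k+3$, $d(u)=d(v)=k+1$ and $d(w)=2k+2\ge k+5=|V(G)|-d(u)+3$, so your assumption holds. But $w$ is a cut vertex, and every $u$--$w$ path in $G[X\cup\{u,w\}]$ and every $w$--$v$ path in $G[Y\cup\{v,w\}]$ has length at most two. Hence every $u$--$v$ path has length at most four. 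The hypothesis is violated only by a different pair: $x_1\,u\,w\,y_2\,v\,y_1$ joins two vertices of degree $2$.

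\emph{Second example.} Join $u$ and $v$ to every vertex of $\{w,b_1,\dots,b_{k-1}\}$ with $k\ge 4$, with or without the edge $uv$, and join $w$ to every $b_i$. Then $G-\{u,v\}$ is a star, so there is no $u$--$v$ path of length five. Yet $b_1\,u\,b_3\,v\,w\,b_2$ joins two vertices of degree $3$. This rules out your fallback for the cases $X\cap Y\neq\emptyset$ and $uv\in E(G)$. In fact that fallback path $u\,a\,w\,a'\,b\,v$ needs an edge $a'b$ inside $S$, which is exactly what the degenerate case excludes.

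\textbf{The missing idea.} The final contradiction must come from a \emph{new} equal-degree pair, namely low-degree vertices of $X\cup Y$, and this needs a sharper count than Step 1.

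Let $D_2=V(G)\setminus(N(u)\cup N(v)\cup N(w)\cup\{u,v\})$. Since $V(G)\setminus N(w)$ contains both $D_2$ and $(N(u)\setminus\{v\})\setminus X$, you get $|X|\ge |D_2|+3$, and likewise $|Y|\ge |D_2|+3$.

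Paths of your kind then force the structure:
\begin{itemize}
\item $u\,a\,c\,w\,b\,v$ with $c\in S$;
\item $u\,c\,a\,w\,b\,v$ with $c\in N(u)\setminus(N(w)\cup\{w\})$, and its symmetric version;
\item $u\,a\,z\,a'\,w\,v$ with $z\in D_2$.
\end{itemize}
Together these show that $N(z)\subseteq\{u,v,w\}\cup D_2$ for every $z\in X\cup Y$, and that each vertex of $D_2$ has at most one neighbour in $X\cup Y$.

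Now split on $|N(u)\cap N(v)|$.
\begin{itemize}
\item If $|N(u)\cap N(v)|\ge 4$: on $X$, $d(z)=2+|N(z)\cap(\{v\}\cup D_2)|$ takes at most $|D_2|+2$ values. By pigeonhole, some $u_1,u_2\in X$ have equal degree. Then $u_1\,u\,u_3\,v\,w\,u_2$, with $u_3\in N(u)\cap N(v)\setminus\{u_1,u_2,w\}$, is a forbidden path.
\item If $|N(u)\cap N(v)|\le 3$: then $|X\cap Y|\le 2$, so $|X\setminus Y|>|D_2|$ and $|Y\setminus X|>|D_2|$. At most $|D_2|$ vertices of $X\cup Y$ see $D_2$. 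So there exist $w_1\in X\setminus Y$ with $N(w_1)=\{u,w\}$ and $w_2\in Y\setminus X$ with $N(w_2)=\{v,w\}$. These are joined by $w_1\,u\,w\,w_3\,v\,w_2$ for any $w_3\in Y\setminus\{w_2\}$.
\end{itemize}
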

\begin{proof}[{\bf Proof}]
Suppose that there exists $w \in N(u) \cap N(v)$ such that $d(w) \ge |V(G)| - d(u) + 3+\mathbf{1}_{uv}$.
Now, we partition the sets $A_u$, $A_v$, $B_{uv}$ and $D_{uv}$ with respect to $w$ as follows:
\begin{align*}
A_1 &= N(w) \cap A_u, &\, A_2 &= A_u \setminus A_1, &\,
A_3 &= N(w) \cap A_v, &\, A_4 &= A_v \setminus A_3,\\
B_1 &= N(w) \cap B_{uv}, &\, B_2 &= B_{uv} \setminus B_1, &\,
D_1 &= N(w) \cap D_{uv}, &\, D_2 &= D_{uv} \setminus D_1.
\end{align*}
Based on these sets, we first establish several claims.
\begin{claim}\label{D23}    
$|A_1|+|B_1| - |D_2|\ge 3$ and $|A_3|+|B_1| - |D_2|\ge 3$.
\end{claim}
\begin{proof}    
Note first that $A_2 \cup B_2 \cup A_4 \cup D_2 = V(G) \setminus N(w)$. Clearly, we have
\begin{align}\label{x-4}    
|A_2| + |B_2| + |A_4| + |D_2|% = |V(G)| - d(w) 
\le |V(G)| - (|V(G)|-d(u)+3+\mathbf{1}_{uv}) = d(u) - 3-\mathbf{1}_{uv}.
\end{align}
Moreover, it is easy to see that
\begin{align}\label{x-}    
|A_1| + |A_2| + |B_1| + |B_2| = d(u) - \mathbf{1}_{uv}
\quad\text{and}\quad
|A_3| + |A_4| + |B_1| + |B_2| = d(v) - \mathbf{1}_{uv}.
\end{align}
Subtracting \eqref{x-4} from the first equality in \eqref{x-} yields that
\[
|A_1| + |B_1| - (|A_4| + |D_2|) \ge (d(u) - \mathbf{1}_{uv}) - (d(u) - 3-\mathbf{1}_{uv}) =  3.
\]
This implies that $|A_1| + |B_1| - |D_2| \ge 3$.
The second inequality follows analogously by subtracting \eqref{x-4} from the second equality in \eqref{x-} as $d(u)=d(v)$.
\end{proof}
\begin{claim}\label{eA0}
The following results hold:
\begin{itemize}
    \item[\textup{(a)}] $e(A_1 \cup B_1 \cup A_3) = 0$,
    \item[\textup{(b)}]  $e\left(A_1 \cup B_1 \cup A_3, (A_2 \cup B_2 \cup A_4 \cup D_1) \setminus \{w\}\right) = 0$, 
    \item[\textup{(c)}] For every vertex $u_1 \in D_2$, we have $|N(u_1) \cap (A_1 \cup B_1 \cup A_3)| \le 1$.
\end{itemize}
\end{claim}
\begin{proof}
   \textup{(a)} Suppose that there exist two distinct vertices $u_1,u_2\in A_1\cup B_1\cup A_3$ with $u_1u_2\in E(G)$. Without loss of generality, we may assume that $u_1\in N(u)$. By Claim~\ref{D23}, there exists some vertex $u_3\in B_1\cup A_3$ different from $u_1$ and $u_2$, implying that $u_3\in N(v)$. Then $u u_1 u_2 w u_3 v$ is a path of length five with $d(u) = d(v)$, a contradiction.

\textup{(b)} Suppose that there exist $u_1 \in A_1 \cup B_1 \cup A_3$ and $u_2 \in (A_2 \cup B_2 \cup A_4 \cup D_1) \setminus \{w\}$ with $u_1u_2 \in E(G)$.
 Without loss of generality, we may assume that $u_1\in N(u)$.
If $u_2 \in D_1$, by Claim~\ref{D23} we can choose $u_3 \in B_1 \cup A_3$ distinct from $u_1$ and $u_2$.
Then $uu_1u_2wu_3v$ is a path of length five with $d(u) = d(v)$, a contradiction.
Suppose that $u_2 \in (A_2 \cup B_2 \cup A_4) \setminus \{w\}$. Without loss of generality, we may assume that $u_2 \in N(u)$.
By Claim~\ref{D23}, we can choose $u_3 \in B_1 \cup A_3$ different from $u_1$ and $u_2$.
Then $uu_2u_1wu_3v$ is a path of length five with $d(u) = d(v)$, a contradiction.

\textup{(c)}  Suppose that $u_1 \in D_2$ has two neighbors $u_2,u_3$ in $A_1\cup B_1\cup A_3$. Without loss of generality, we  may assume that $u_2\in N(u)$. Then $u u_2 u_1 u_3 w v$ is a path of length five with $d(u) = d(v)$, a contradiction.
\end{proof}

We now split the remainder of the proof into two cases based on the value of $|B_{uv}|$.

\medskip
{\bf Case 1}. $|B_{uv}|\ge 4$.
\medskip

%By Claim~\ref{D23}, we have $|A_1\cup B_1| \ge |D_2|+3$. 
For each vertex $z\in A_1\cup B_1$, we consider the number of neighbors of $z$ in $\{v\}\cup D_2$. Note that this number is between $0$ and $|D_2|+1$. Since $|A_1\cup B_1| > |D_2|+2$ by Claim~\ref{D23}, the pigeonhole principle guarantees two distinct vertices $u_1,u_2\in A_1\cup B_1$ with
\[
|N(u_1)\cap(\{v\}\cup D_2)| = |N(u_2)\cap(\{v\}\cup D_2)|.
\]
Observe that Claim~\ref{eA0}(a) and \ref{eA0}(b) imply $N(A_1 \cup B_1 \cup A_3) \subseteq \{u, v, w\} \cup D_2$.
Moreover, $u$ and $w$ are adjacent to every vertex in $A_1 \cup B_1$.
Consequently, $d(u_1) = d(u_2)$.
Choose a vertex $u_3 \in B_{uv}$ different from $u_1$, $u_2$ and $w$ (such a vertex exists as $|B_{uv}| \ge 4$).
Then $u_1uu_3vw u_2$ is a path of length five with $d(u_1) = d(u_2)$, a contradiction.

\medskip
{\bf Case 2}. $|B_{uv}|\le 3$.
\medskip

Since $B_1 \subseteq B_{uv}\setminus\{w\}$ and $|B_{uv}| \le 3$, we have $|B_1| \le 2$.
This together with Claim~\ref{D23} implies that $|A_1| > |D_2|$ and $|A_3| > |D_2|$.
By Claim~\ref{eA0}(c), each vertex in $D_2$ is adjacent to at most one vertex in $A_1 \cup B_1 \cup A_3$. Hence $e(D_2, A_1 \cup B_1 \cup A_3) \le |D_2|$.
Combining $|A_1| > |D_2|$ and $|A_3| > |D_2|$, we can select $w_1 \in A_1$ and $w_2 \in A_3$ such that $N(w_1) \cap D_2 = N(w_2) \cap D_2 = \emptyset$.
Claim~\ref{eA0}(a) and \ref{eA0}(b) imply that $w_1$ and $w_2$ have no neighbors in $V(G) \setminus (\{u, v, w\} \cup D_2)$, and we have already excluded neighbors in $D_2$.
Thus, $N(w_1) = \{u, w\}$ and $N(w_2) = \{v, w\}$, i.e., $d(w_1) = d(w_2)$.
Since $|A_3 \cup B_1| \ge 3$ by Claim~\ref{D23}, there exists a vertex $w_3 \in (A_3 \cup B_1) \setminus \{w_2\}$.
Then $w_1uw w_3vw_2$ is a path of length five with $d(w_1) = d(w_2)$, a contradiction.
\end{proof}

The last lemma is implicit in Chen and Ma \cite{CM2025} (see the proof of Lemma 5) and can be stated as follows. We include its proof here for the sake of completeness. Throughout this paper, we set $\binom{x}{2} = 0$ for any integer $x<2$.

\begin{lem}[Chen and Ma \cite{CM2025}]\label{lambda}
Let $\nu,\Delta,\beta$ and $b$ be positive integers such that $\Delta\ge b$ and $\nu\ge \Delta+1$, and let
\[
\lambda(\nu,\Delta,\beta,b):=\max_{\mathcal{A}, \mathcal{B}} \sum_{k \in \mathcal{A} \cup \mathcal{B}} k,
\]
where
\begin{itemize}
\item[(1)] $\mathcal{A}$ is a sequence of $\Delta-b$ distinct integers in $\{1, 2, \dots, \Delta - 1\}$;
\item[(2)] $\mathcal{B}$ is a sequence of $\nu - \Delta-1$ integers in $\{0,1,\dots, \Delta - 1\}$;
\item[(3)] no two elements in $\mathcal{A} \cup \mathcal{B}$ share the same value greater than $\beta$.
\end{itemize}
Then
\[
\lambda(\nu,\Delta,\beta,b)=\sum_{k=b}^{\Delta-1} k + (\nu-1-\Delta)\beta+\binom{b-\beta}{2}-\binom{(b-\beta)-(\nu-1-\Delta)}{2}.
\]
\end{lem}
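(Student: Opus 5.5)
The plan is to prove the formula as a matching upper bound and construction. For the upper bound, I compress an arbitrary feasible pair $(\mathcal{A},\mathcal{B})$ into a one-parameter family of configurations and then optimize over that parameter. Write $m:=\nu-1-\Delta\ge 0$ for the length of $\mathcal{B}$.

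\emph{Upper bound.} Fix feasible $\mathcal{A},\mathcal{B}$. Let $\mathcal{B}^{>}$ be the elements of $\mathcal{B}$ with value greater than $\beta$, and set $s:=|\mathcal{B}^{>}|$, so $0\le s\le m$.
\begin{itemize}
\item The elements of $\mathcal{A}$ are pairwise distinct by (1).
\item By (3), the elements of $\mathcal{B}^{>}$ are pairwise distinct and differ from every element of $\mathcal{A}$.
\end{itemize}
Hence $U:=\mathcal{A}\cup\mathcal{B}^{>}$ is a set of $\Delta-b+s$ distinct integers in $\{1,\dots,\Delta-1\}$. In particular $\Delta-b+s\le\Delta-1$, that is, $s\le b-1$. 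The sum over $U$ is at most the sum of the largest $\Delta-b+s$ integers of $\{1,\dots,\Delta-1\}$, namely $\sum_{k=b-s}^{\Delta-1}k$. Each of the remaining $m-s$ elements of $\mathcal{B}$ is at most $\beta$. Therefore
\[
\sum_{k\in\mathcal{A}\cup\mathcal{B}}k\;\le\; f(s):=\sum_{k=b-s}^{\Delta-1}k+(m-s)\beta .
\]

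\emph{Optimizing over $s$.} The increment is
\[
f(s+1)-f(s)=(b-s-1)-\beta ,
\]
which is positive exactly when $s<b-\beta-1$ and decreasing in $s$. So $f$ is maximized on $0\le s\le m$ at $s^*=\min\{m,\max(0,b-\beta-1)\}$. Then $f(s^*)$ equals $\sum_{k=b}^{\Delta-1}k+m\beta$ plus the sum of the first $s^*$ increments:
\[
f(s^*)=\sum_{k=b}^{\Delta-1}k+m\beta+\sum_{j=0}^{s^*-1}(b-\beta-1-j).
\]
The last sum is the sum of the top $s^*$ elements of $\{1,\dots,b-\beta-1\}$, which is
\[
\binom{b-\beta}{2}-\binom{b-\beta-s^*}{2}.
\]
The convention $\binom{x}{2}=0$ for $x<2$ makes this agree with $\binom{b-\beta}{2}-\binom{(b-\beta)-m}{2}$ in all cases:
\begin{itemize}
\item if $b\le\beta$, both binomials vanish and $s^*=0$;
\item if $m\ge b-\beta-1$, then $s^*=b-\beta-1$ and $\binom{b-\beta-m}{2}=0=\binom{1}{2}$;
\item otherwise $s^*=m$.
\end{itemize}
This gives the claimed upper bound.

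\emph{Construction.} Take $\mathcal{A}=\{b,\dots,\Delta-1\}$. Let $\mathcal{B}$ consist of the values $b-1,b-2,\dots,b-s^*$, all of which exceed $\beta$ and are unused by $\mathcal{A}$, together with $m-s^*$ copies of $\beta$. We need $\beta\le\Delta-1$ so that $\beta$ is an allowed value in $\mathcal{B}$; if $\beta\ge\Delta$, condition (3) is vacuous, and I would check that the formula still reads correctly there (the $\binom{\cdot}{2}$ terms vanish and the bound from $f$ uses $\Delta-1$ instead of $\beta$). Conditions (1)--(3) hold, and the sum equals $f(s^*)$.

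The main obstacle is this last boundary bookkeeping: the $\beta\ge\Delta$ case, and verifying that the binomial convention reproduces $s^*$ exactly in every regime. The exchange step itself is straightforward once $U$ is observed to be a set of distinct values.
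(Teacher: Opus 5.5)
Your argument is correct for $\beta\le\Delta-1$ (see the second paragraph), and it is organized differently from the paper's.

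The paper splits into three regimes according to $b-\beta$:
\begin{itemize}
\item for $b-\beta\le 0$ it uses an exchange argument to force $\mathcal{A}=\{b,\dots,\Delta-1\}$ with every entry of $\mathcal{B}$ equal to $\beta$;
\item for $b-\beta\ge\nu-\Delta$ it takes all $\nu-1-b$ entries to be the top distinct values, all exceeding $\beta$;
\item for the intermediate range it simply asserts that the optimum fills $\{\beta+1,\dots,\Delta-1\}$ and sets the remaining entries to $\beta$.
\end{itemize}
You instead parametrize every feasible pair by $s=|\mathcal{B}^{>}|$. The observation that $\mathcal{A}\cup\mathcal{B}^{>}$ is a set of distinct values gives the single bound $f(s)$, and you then maximize a discrete concave function. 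The binomial terms fall out as partial sums of the increments $b-\beta-1-j$. The paper's three regimes become $s^*=0$, $s^*=m$ and $s^*=b-\beta-1$ respectively. Your route is uniform and gives a real proof of the intermediate case, which the paper only asserts. Its upper-bound half also holds for all parameter values. The paper's route is more configurational but leaves optimality in its third case unjustified.

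On the boundary case you flagged: do not try to verify the formula when $\beta\ge\Delta$, because it is false there whenever $m\ge 1$. Take $b=\Delta$, $\nu=\Delta+2$ and any $\beta\ge\Delta$. Then $\mathcal{A}$ is empty and $\mathcal{B}$ is a single entry from $\{0,\dots,\Delta-1\}$, so $\lambda=\Delta-1$, whereas the right-hand side equals $\beta$. So no check can succeed, and your hint that the bound should use $\Delta-1$ in place of $\beta$ confirms that the stated value changes.

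The statement needs the extra hypothesis $\beta\le\Delta-1$. The paper uses it tacitly, since its first case sets every entry of $\mathcal{B}$ equal to $\beta$. It also holds in the application, where $\beta\le\Delta-4$. Moreover, the application only uses the inequality $\sum_{v\in A\cup\overline{N}(v_0)}d(v)\le\lambda(2n+1,\Delta,\beta,|B|)$, and your bound $f(s)\le f(s^*)$ supplies this with no restriction on $\beta$. With the hypothesis added, your construction is valid and the proof is complete.
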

\begin{proof}[{\bf Proof}]
We distinguish three cases according to the value of $b-\beta$.

\medskip
\textbf{Case 1}. $b-\beta\le0$.
\medskip

We argue that  $\mathcal{A} \supseteq \{\beta, \beta+1,\dots,\Delta-1\}$ when the maximum is achieved.
Otherwise, there exist $k\in\mathcal{A}$ and $\ell\notin\mathcal{A}$ with $k<\beta\le\ell\le\Delta-1$.
If $\ell$ is not included in $\mathcal{B}$, then we replace $k$ with $\ell$ in $\mathcal{A}$.
If $\ell$ is included in $\mathcal{B}$, then we replace $k$ with $\ell$ in $\mathcal{A}$ and also replace $\ell$ with $\beta$ in $\mathcal{B}$.
In both cases, conditions (1), (2), and (3) remain satisfied,
but the sum $\sum_{k\in\mathcal{A}\cup\mathcal{B}}k$ strictly increases, a contradiction.
Since $\mathcal{A}\supseteq\{\beta,\beta+1,\dots,\Delta-1\}$ and $\mathcal{A}$ satisfies condition (1),
we easily obtain $\mathcal{A}=\{b,b+1,\dots,\Delta-1\}$ and all elements in $\mathcal{B}$ must be $\beta$. Then
\begin{align*}
    \lambda(\nu,\Delta,\beta,b)=\sum_{k=b}^{\Delta-1}k+(\nu-1-\Delta)\beta.
\end{align*}

\medskip
{\bf Case 2}.  $b-\beta\ge \nu-\Delta$.
\medskip

Note that the combined sequence $\mathcal{A} \cup \mathcal{B}$ has $\nu-1 -b$ elements.
Since $b-\beta\ge (\nu-1-\Delta)+ 1$, we have $\Delta + b - (\nu-1)\ge \beta+1$.
Condition (3) implies that the optimal configuration for $\mathcal{A} \cup \mathcal{B}$ is
$\{\Delta-1, \Delta-2, \dots, \Delta + b - (\nu -1)\}$,
for which conditions (1) and (2) remain satisfied. Thus
\begin{align*}
\lambda(\nu, \Delta, \beta, b) = \sum_{k = \Delta + b - (\nu-1)}^{\Delta - 1} k.
\end{align*}

\medskip
\textbf{Case 3}. $1\le b-\beta\le \nu-\Delta-1$.
\medskip

With the optimization reasoning from the previous cases in mind,
it is straightforward to see that the optimal configuration in this case consists of all elements of $\mathcal{A}$ and some elements of $\mathcal{B}$ forming $\{\Delta-1, \Delta-2, \dots, \beta+1\}$,
while the remaining elements of $\mathcal{B}$ are all $\beta$.
Therefore
\[
\lambda(\nu, \Delta, \beta, b)=\sum_{k=\beta+1}^{\Delta-1}k + \big((\nu-1)-b-\Delta+\beta+1\big)\beta.
\]

Combing the above three cases, the desired result follows. This completes the proof of Lemma \ref{lambda}.
\end{proof}

\section{Proof of Theorem \ref{LZ}}\label{Main-Sec}
Throughout this section, let \(n \ge 11\) be an integer and let \(G\) be a graph on \(2n+1\) vertices with at least \(n^2+n\) edges. Assume that \(G\) contains no two vertices of equal degree joined by a path of length five. Our goal is to show that \(G\) must be the complete bipartite graph \(K_{n,n+1}\).

%To complete our proof, we establish the following two lemmas. Both results play key roles in our proof; however, the first one represents the most technical part of our argument, and the second one employs the same method as that of Chen and Ma \cite{CM2025}.

To complete our proof, we establish the following two lemmas. Both play key roles: the first constitutes the most technical part of our argument, whereas the second follows the same method as Chen and Ma \cite{CM2025}—despite the crucial use of Lemma \ref{dudv}. Let \(\beta\) be the largest integer such that \(G\) contains two vertices of degree \(\beta\). 

\begin{lem}\label{beta 1/2}
    We have \(\beta \leq n + 1\). Moreover, if \(\beta = n + 1\), then \(G\) is isomorphic to \(K_{n,n+1}\).
\end{lem}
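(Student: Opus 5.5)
We argue by contradiction, assuming first that $\beta \ge n+2$, and study two vertices $u,v$ with $d(u)=d(v)=\beta$. The case $\beta=n+1$ is handled afterwards by the same machinery with the inequalities made tight.

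Since $|V(G)|=2n+1$ and $\beta\ge n+2$, Observation~\ref{1} gives
\[
|B_{uv}| \ge 2\beta-(2n+1)+2(1-\mathbf{1}_{uv}) \ge 3-2\mathbf{1}_{uv},
\]
so $u$ and $v$ have common neighbours, and in fact many of them when $\beta$ is large. By Lemma~\ref{dudv}, every $w\in B_{uv}$ satisfies
\[
d(w)\le 2n+3+\mathbf{1}_{uv}-\beta\le n+2.
\]
So the common neighbourhood consists of vertices of small degree.

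The next step is to bound the number of edges. We partition $V(G)$ into the set $H$ of vertices of degree greater than $\beta$, which have pairwise distinct degrees, and the rest. Then we count
\[
2e(G)=\sum_{x\in V(G)} d(x)
\]
using Lemma~\ref{lambda}, applied with $\nu=2n+1$, $\Delta$ the maximum degree, and $b$ chosen so that $\Delta-b$ counts the vertices whose degrees are forced to be distinct. The extra information from Lemma~\ref{dudv}, namely that the vertices of $B_{uv}$, of which there are at least about $2\beta-2n-1$, have degree at most $2n+3-\beta$ rather than up to $\beta$, lowers the sum below the generic bound. We aim to show that
\[
\sum_x d(x) < 2n^2+2n
\]
whenever $\beta\ge n+2$ and $n\ge 11$, which gives the contradiction. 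Roughly, the generic bound from Lemma~\ref{lambda} is about $\sum_{k}k+(\nu-1-\Delta)\beta$. Replacing $|B_{uv}|$ of the $\beta$-terms (or of the higher terms) by $2n+3-\beta$ costs at least $|B_{uv}|(2\beta-2n-3)$, which grows quadratically in $\beta-n$ and should beat the gain from having $\beta$ large. This optimisation, that is, choosing the worst $\Delta$ and tracking the boundary cases $\beta=n+2$ and $\beta=n+3$ where $|B_{uv}|$ may be as small as $1$ to $3$, is where I expect the main obstacle. For those small cases I would additionally use the structure from Lemma~\ref{dudv}'s proof: paths $u\,x\,w\,y\,v$ extended by one edge force many non-edges. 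The threshold $n\ge 11$ should come out of this final numerical check.

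For $\beta=n+1$, the same count must be tight. Observation~\ref{1} gives $|B_{uv}|\ge 1$ when $uv\notin E$, and Lemma~\ref{dudv} bounds common neighbours by degree $n+2$. Equality in the degree-sum bound (with $e(G)\ge n^2+n$) should force all degree-$(n+1)$ vertices to have pairwise distinct other-degree patterns. It should also force every vertex to have degree $n$ or $n+1$, with the degree-$(n+1)$ vertices, $n$ of them, forming an independent set completely joined to the $n+1$ vertices of degree $n$, which in turn form an independent set. That is precisely $K_{n,n+1}$.
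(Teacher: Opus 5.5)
\textbf{There is a genuine gap: the degree count you describe cannot produce a contradiction.} The failure is not limited to the boundary cases $\beta\in\{n+2,n+3\}$; it occurs for every $\beta$ up to roughly $3n/2$.

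Your count uses only two constraints:
\begin{itemize}
\item degrees above $\beta$ are pairwise distinct;
\item vertices of $B_{uv}$ have degree at most $2n+3+\mathbf{1}_{uv}-\beta$, by Lemma~\ref{dudv}.
\end{itemize}
Every other vertex may therefore have degree $\beta\ge n+1$. Already $(2n+1)(n+1)=2n^2+3n+1>2n^2+2n$, so the $B_{uv}$-correction would have to be of order $n(\beta-n)$, and it is not.

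A concrete instance: take $\beta=n+c$ with $c\ge 2$ and $uv\in E(G)$. Observation~\ref{1} allows $|B_{uv}|=2c-1$ (when $D_{uv}=\emptyset$). Assign degree $n+c$ to the $2n+2-2c$ vertices outside $B_{uv}$ and degree $n-c+4$ to the $2c-1$ vertices of $B_{uv}$. This assignment satisfies both of your constraints, so your upper bound is at least its total, $2n^2+(2c+1)n-4c^2+11c-4$. That exceeds $2n^2+2n$ for every $c$ up to about $n/2$.

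In two cases Lemma~\ref{dudv} gives no information at all: $\beta=n+1$, and $\beta=n+2$ with $uv\in E(G)$. There the cap $2n+3+\mathbf{1}_{uv}-\beta$ is at least $\beta$. So no optimisation over $\Delta$ with Lemma~\ref{lambda} can close the argument. For the same reason, when $\beta=n+1$ there is no tight inequality whose equality case could force $K_{n,n+1}$. The structure you list at the end (degrees $n$ and $n+1$, two independent classes, complete join) is asserted, not derived.

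\textbf{The missing idea is to show that nearly every vertex has degree at most $n$, not merely at most $\beta$.} The paper does this with path-of-length-five arguments whose endpoints are $u$ and $v$:
\begin{itemize}
\item Claims~\ref{D2N+1}--\ref{2.10} show that every vertex of $D_{uv}$ has degree at most $n$ unless $G\cong K_{n,n+1}$. The extremal graph appears inside Claim~\ref{2.10}, in the case $\mathbf{1}_{uv}=0$, $c=1$, $|B_{uv}|=n+1$, $A_u=A_v=\emptyset$.
\item Claims~\ref{2.11}--\ref{2n+4} show that at most two vertices of $A_u\cup B_{uv}\cup A_v$ have degree at least $n+1$, and any two vertices there have degree sum at most $2n+4$.
\end{itemize}
Together these give $2e(G)\le 2n^2+n+2c+4$, which is off by only $O(c)$. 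Only at that point is Lemma~\ref{dudv} used: it caps $B_{uv}$ at $n-c+3+\mathbf{1}_{uv}$ and eliminates $c\ge 3+\mathbf{1}_{uv}$. The leftover case $c=4$, $\mathbf{1}_{uv}=1$ needs one more explicit path.

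Lemma~\ref{lambda} plays no role in this lemma; the paper uses it for Lemma~\ref{beta n+2}. Your remark that paths $u\,x\,w\,y\,v$ force many non-edges points in the right direction. It would have to be developed into exactly these degree bounds on all of $D_{uv}$ and $A_u\cup B_{uv}\cup A_v$.
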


\begin{lem}\label{beta n+2}
We have either $\beta \geq \Delta-3$ or $\Delta\leq n+3$.
\end{lem}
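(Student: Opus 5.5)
We argue by contradiction: assume $\beta \le \Delta-4$ and $\Delta \ge n+4$, and show that $e(G) < n^2+n$. This follows the counting scheme of Chen and Ma, with Lemma~\ref{lambda} as the main tool.

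\emph{Setting up the degree sequence.} Every degree strictly above $\beta$ is taken by at most one vertex. Let $v_0$ be a vertex of maximum degree $\Delta$. Let $b$ be a threshold with $b \le \Delta$, chosen so that the $\Delta-b$ neighbours of $v_0$ we single out have pairwise distinct degrees in $\{1,\dots,\Delta-1\}$. These neighbours form the sequence $\mathcal A$. The remaining $\nu-\Delta-1$ vertices, with $\nu = 2n+1$, form the sequence $\mathcal B$, with degrees in $\{0,\dots,\Delta-1\}$. Condition (3) of Lemma~\ref{lambda} is then exactly the definition of $\beta$.

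\emph{Choosing $b$.} The natural choice is to take $b$ to be the number of neighbours of $v_0$ whose degree is at most $\beta$ (or repeats another degree). The remaining neighbours then have distinct degrees above $\beta$, which is what $\mathcal A$ requires. The other neighbours of $v_0$ have degrees at most $\beta$.

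\emph{Where Lemma~\ref{dudv} enters.} We must show $b$ is large, or that these low-degree vertices contribute little. Take two vertices $u,v$ of equal degree $\beta$. By Lemma~\ref{dudv}, every common neighbour $w$ of $u$ and $v$ satisfies $d(w) \le 2n+1-\beta+3$. In particular, if $\Delta \ge 2n+5-\beta$, then $v_0$ cannot be a common neighbour of any equal-degree pair. So the neighbourhood of $v_0$ contains at most one vertex of each degree, including degrees $\le \beta$. This lets us put essentially all of $N(v_0)$ into $\mathcal A$, with distinct values, and forces $b$ to be small. If instead $\Delta$ is smaller, we get $\beta \ge 2n+5-\Delta$, so $\beta$ is large, and we use Observation~\ref{1} to bound the overlaps.

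\emph{Bounding the edges.} With this setup,
\[
2e(G) \le \Delta + \lambda(2n+1,\Delta,\beta,b),
\]
and we evaluate the right-hand side with the closed formula of Lemma~\ref{lambda}. Using $\beta \le \Delta-4$ and $\Delta \ge n+4$, we then check that this is less than $2(n^2+n)$. This is a quadratic inequality in $\Delta$ and $\beta$; we would verify it at the extremal values and use convexity in between. We also use Lemma~\ref{beta 1/2}, which gives $\beta \le n+1$.

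The main obstacle is the step controlling the neighbours of $v_0$ that share degrees, since that is what makes $b$ small enough for the bound to close. We would first try the dichotomy on whether $\Delta + \beta \ge 2n+5$ described above. If that is too weak, we would apply Lemma~\ref{dudv} to each equal-degree pair inside $N(v_0)$, which forces their common degree to be at least $2n+5-\Delta$, and bound how many such pairs there can be.
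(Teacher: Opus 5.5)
There is a genuine gap, and it sits exactly at the step you flag as the main obstacle.

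\textbf{The final inequality is invalid.} The bound $2e(G)\le\Delta+\lambda(2n+1,\Delta,\beta,b)$ does not hold. $\lambda$ sums only $(\Delta-b)+(2n-\Delta)$ values, so the $b$ neighbours of $v_0$ that you leave out of $\mathcal A$ are dropped entirely. With your ``natural choice'' (neighbours of degree at most $\beta$), the only bound you have for them is $\beta$ each. The estimate then reduces to ``degrees above $\beta$ occur once, all others are at most $\beta$''. That cannot close, even with $\beta\le n$ from Lemma~\ref{beta 1/2}. For $\Delta=2n$ and $\beta=n$ it allows degrees $2n,2n-1,\dots,n+1$ once each plus $n+1$ vertices of degree $n$, a degree sum of $(5n^2+3n)/2>2n^2+2n$.

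\textbf{Lemma~\ref{dudv} is misapplied.} For $u,v\in N(v_0)$ with $d(u)=d(v)=d$, the lemma with $w=v_0$ gives $\Delta\le 2n+4-d$. The relevant quantity is $d$, not $\beta$. So $\Delta\ge 2n+5-\beta$ does not forbid repeated degrees in $N(v_0)$ below $2n+5-\Delta$, and you cannot put ``essentially all of $N(v_0)$'' into $\mathcal A$. Your fallback states the inequality backwards: equal-degree pairs in $N(v_0)$ have common degree \emph{at most} $2n+4-\Delta$, not at least $2n+5-\Delta$. The ``$\Delta$ smaller'' branch is reversed too: $\Delta<2n+5-\beta$ means $\beta\le 2n+4-\Delta$.

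\textbf{The missing idea.} The paper splits $N(v_0)$ at the $\beta$-independent threshold $2n-\Delta+5$.
\begin{itemize}
\item The set $A$ of neighbours of degree at least $2n-\Delta+5$ has pairwise distinct degrees in an interval of $2\Delta-2n-5$ integers. Hence $|A|\le 2\Delta-2n-5$ and $|B|=\Delta-|A|\ge 2n-\Delta+5$.
\item Every vertex of $B$ has degree at most $2n-\Delta+4\le n$.
\end{itemize}
The correct estimate is
\[
2e(G)\le \Delta+\lambda(2n+1,\Delta,\beta,|B|)+|B|(2n-\Delta+4).
\]
It closes because $b=|B|$ is \emph{large}, i.e.\ many neighbours of $v_0$ have small degree. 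This is the opposite of your aim of making $b$ small.

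\textbf{The final check is not a single convexity argument.} The paper splits into the three regimes of Lemma~\ref{lambda}: $|B|-\beta\le 0$, $|B|-\beta\ge 2n-\Delta+1$, and the range in between. The middle regime is split further by the sign of $2n-\Delta+4-\beta$. Different cases use $|B|\ge 2n-\Delta+5$, $\beta\ge 5$ (Lemma~\ref{beta=4}), and $\beta\le\Delta-4$.
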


We now give a proof of Theorem \ref{LZ}, postponing the proofs of Lemmas~\ref{beta 1/2} and \ref{beta n+2} to the next two subsections.
\begin{proof}[\bf{Proof of Theorem \ref{LZ}}]
Lemmas~\ref{beta 1/2} and \ref{beta n+2} imply that either (i) $\beta = n+1$ and $G$ is isomorphic to $K_{n,n+1}$, or (ii) $\beta \le n$ and $\Delta \le n+3$.
However, the second case yields that
$$2e(G)=\sum_{v\in V(G)} d(v) \le (2n+1-3)n+\sum_{k=n+1}^{n+3} k= 2n^2 + n + 6.$$
Since $n \ge 11$, this contradicts our assumption that $e(G)\ge n^2 + n$.
Therefore, we conclude that $G$ must be the complete bipartite graph $K_{n,n+1}$.
This completes the proof of Theorem~\ref{LZ}.
\end{proof}

\subsection{Proof of Lemma \ref{beta 1/2}}
In this subsection, we give a proof of Lemma \ref{beta 1/2}.
\begin{proof}[\bf{Proof of Lemma \ref{beta 1/2}}]
    Suppose  for a contradiction that \(\beta = n + c\) with \(c \geq 1\). Choose two vertices \(u, v \in V(G)\) such that \(d(u) = d(v) = n + c\).
 Let \(|B_{uv}| = x\). Then \(|A_u| =|A_v|=n+c-x-\textbf{1}_{uv}\) and \(|D_{uv}| = 2n-1-|B_{uv}| -|A_u|-|A_v|=x-1-2c+2\cdot\textbf{1}_{uv}\). Clearly, we have $|D_{uv}| < |B_{uv}|$ as $c \ge 1$.

\begin{claim}\label{D2N+1}
   Let $|D_{uv}| \neq 0$ and suppose that there exists a vertex $w \in D_{uv}$ with $d(w) \ge n+1$. Then for every vertex $w_1 \in A_u \cup A_v \cup B_{uv}$, either (i) $d(w)+d(w_1) \le 2n$, or (ii) $d(w)+d(w_1)=2n+1$ and $ww_1 \in E(G)$.
\end{claim}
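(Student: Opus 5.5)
The plan is a contradiction argument: assume the conclusion fails for some \(w_1\), and build a path of length five between the equal-degree vertices \(u\) and \(v\). The path will run \(u\)–\(w_1\)–\(z\)–\(w\)–\(y\)–\(v\), where \(z\) is a common neighbour of \(w\) and \(w_1\), and \(y\) is a common neighbour of \(w\) and \(v\). Both common neighbours will come from Observation~\ref{1} applied on the \(2n+1\) vertices of \(G\).

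Fix \(w\in D_{uv}\) with \(d(w)\ge n+1\) and some \(w_1\in A_u\cup A_v\cup B_{uv}\). By the symmetry between \(u\) and \(v\) (they have equal degree), we may assume \(w_1\in N(u)\). Suppose the conclusion fails, so either \(d(w)+d(w_1)\ge 2n+2\), or \(d(w)+d(w_1)=2n+1\) and \(ww_1\notin E(G)\).

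\textbf{Step 1: the vertex \(z\).} Apply Observation~\ref{1} to the pair \(w,w_1\), using \(|D_{ww_1}|\ge 0\):
\[
|B_{ww_1}|\ \ge\ d(w)+d(w_1)-(2n+1)+2\bigl(1-\mathbf{1}_{ww_1}\bigr).
\]
In the first case the right-hand side is at least \(1\). In the second case it equals \(0+2=2\). Either way there is a vertex \(z\in N(w)\cap N(w_1)\). Since \(w\in D_{uv}\) is adjacent to neither \(u\) nor \(v\), we get \(z\notin\{u,v\}\).

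\textbf{Step 2: the vertex \(y\).} Apply Observation~\ref{1} to the nonadjacent pair \(w,v\):
\[
|B_{wv}|\ \ge\ (n+1)+(n+c)-(2n+1)+2\ =\ c+2\ \ge\ 3.
\]
So we may choose \(y\in N(w)\cap N(v)\) with \(y\notin\{w_1,z\}\).

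\textbf{Step 3: the path.} Consider \(u\,w_1\,z\,w\,y\,v\). We check that the six vertices are distinct.
\begin{itemize}
\item \(w\) differs from all the others, since \(w\in D_{uv}\) while the others are \(u\), \(v\), or neighbours of \(u\) or \(v\).
\item \(z\) and \(y\) differ from \(u\), because they are neighbours of \(w\) and \(w\) is not adjacent to \(u\).
\item \(z\) and \(y\) differ from \(v\): \(z\) by Step~1, and \(y\) because it is a neighbour of \(v\).
\item \(w_1\notin\{u,v\}\) by definition of \(A_u\cup A_v\cup B_{uv}\).
\item \(y\notin\{w_1,z\}\) by the choice in Step~2.
\end{itemize}
Hence this is a path of length five joining \(u\) and \(v\), which have equal degree \(n+c\). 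This contradicts the assumption on \(G\).

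The only delicate point is the distinctness check in Step~3, in particular \(y\ne z\). This is exactly why we need \(|B_{wv}|\ge 3\) rather than merely \(|B_{wv}|\ge1\), and it holds because \(c\ge1\).
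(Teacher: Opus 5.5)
Your proof is correct and is the paper's argument: Observation~\ref{1} gives a common neighbour $z$ of $w$ and $w_1$, the bound $|N(w)\cap N(v)|\ge 3$ gives a further common neighbour $y$ of $w$ and $v$, and together they yield the path $u\,w_1\,z\,w\,y\,v$. One small fix in your distinctness check: $z$ need not be a neighbour of $u$ or $v$, so justify $z\neq w$ (and the unlisted $z\neq w_1$) directly from $z\in N(w)\cap N(w_1)$.
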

\begin{proof}
   Assume, to the contrary, that there exists some vertex $w_1 \in A_u \cup A_v \cup B_{uv}$ such that $d(w)+d(w_1) \ge 2n+2$, or $d(w)+d(w_1)=2n+1$ but $ww_1 \notin E(G)$.
Without loss of generality, we may assume that $w_1 \in N(u)$.
Applying Observation \ref{1} to the pair $(w,w_1)$, it is easy to  obtain that $|N(w)\cap N(w_1)| \ge 1$ by our assumption. Thus, there exists a vertex $w_2 \in N(w)\cap N(w_1)$.
Since $w\in D_{uv}$, we have $wu,wv\notin E(G)$.
Now apply Observation \ref{1} to $(w,v)$. It follows from $d(w)\ge n+1$ and $d(v)=\beta\ge n+1$ that $|N(w)\cap N(v)| \ge 3$.
Hence, we can find a vertex $w_3\in N(w)\cap N(v)$ distinct from $w_1$ and $w_2$.
Then $u w_1 w_2 w w_3 v$ is a path of length five whose endpoints have the same degree, a contradiction.
\end{proof}
\begin{claim}\label{2.9}
  If $|D_{uv}|\neq 0$, then we have $d(w)\le n+1$  for any vertex $w\in D_{uv}$.
\end{claim}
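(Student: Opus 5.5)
The plan is to argue by contradiction: assume some $w\in D_{uv}$ has $d(w)\ge n+2$, and show that $G$ then has too few edges. The first step is to feed this $w$ into Claim~\ref{D2N+1}. Alternative (ii) of that claim requires $d(w_1)=2n+1-d(w)$, and alternative (i) gives $d(w_1)\le 2n-d(w)$. So for every $w_1\in S:=A_u\cup A_v\cup B_{uv}$ we get
\[
d(w_1)\le 2n+1-d(w)\le n-1 .
\]
Every vertex of the set $S$ therefore has degree at most $n-1$. By contrast, $u$ and $v$ have degree $n+c\ge n+1$.

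Second, I would collect the structural facts needed for counting.
\begin{itemize}
\item Since $wu,wv\notin E(G)$, Observation~\ref{1} applied to $(w,u)$ and to $(w,v)$ gives $|N(w)\cap N(u)|\ge c+3$ and $|N(w)\cap N(v)|\ge c+3$.
\item From $|D_{uv}|=x-1-2c+2\cdot\mathbf{1}_{uv}$ and $x\le n+c-\mathbf{1}_{uv}$, we get $|D_{uv}|\le n-c+1\le n$. Hence $|S|=2n-1-|D_{uv}|\ge n-1$.
\item All neighbours of $u$ and $v$ lie in $S\cup\{u,v\}$. So every edge of $G$ either meets $S$, or is the edge $uv$, or lies inside $D_{uv}$.
\end{itemize}
This yields the bound
\[
e(G)\le \sum_{s\in S} d(s) - e(G[S]) + \mathbf{1}_{uv} + e(G[D_{uv}]) .
\]
Combining it with $d(s)\le n-1$ and $|D_{uv}|=d$ gives $e(G)\lesssim (2n-1-d)(n-1)+\binom{d}{2}$.

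The main obstacle is that this crude bound is not below $n^2+n$ when $d$ is small: for $d\approx 1$ it is about $2n^2$. So the counting must be sharpened. I would do this in two ways, and whichever works I would pursue first.
\begin{itemize}
\item \emph{Bounding edges inside $D_{uv}$.} Vertices of $D_{uv}$ with degree at least $n+1$ are, by Claim~\ref{D2N+1}, nearly complementary in degree to all of $S$. This should force $e(G[D_{uv}])$ and the edges from $D_{uv}$ to $S$ to be small. In particular, a second high-degree vertex of $D_{uv}$ would push the degrees in $S$ down further.
\item \emph{Exploiting the degree restriction on $S$.} Vertices of $S$ take degrees in $\{1,\dots,n-1\}$, and there are at least $n-1$ of them, so many degrees repeat. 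A repeated pair $s_1,s_2\in N(u)\cup N(v)$ should be joined by a path of length five routed through $u$, $v$, and the common neighbours of $w$ with $u$ and with $v$. These common neighbours supply the disjoint interior vertices, for instance paths of the form $s_1\,u\,a\,w\,b\,v$ extended appropriately. The aim is to forbid repetitions except at small degree values, exactly as in the setup of Lemma~\ref{lambda}. Then $\sum_{s\in S}d(s)$ is bounded via $\lambda(\nu,\Delta,\beta,b)$ with $\Delta\le n-1$ playing the role of the maximum degree on $S$.
\end{itemize}

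I expect the two refinements together to give $\sum_{v} d(v)<2n^2+2n$ for $n\ge 11$. The hardest part is the second refinement: constructing the length-five paths between equal-degree vertices of $S$ with all interior vertices distinct, using only the guaranteed common neighbourhoods of size $c+3$.
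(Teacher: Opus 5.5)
\textbf{There is a genuine gap: the proposal never derives a contradiction.} Your first step is correct and is also how the paper starts. By Claim~\ref{D2N+1}, a vertex $w\in D_{uv}$ with $d(w)\ge n+2$ forces $d(s)\le 2n+1-d(w)\le n-1$ for every $s\in S$.

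After that, you switch to counting edges through $S$. Bounding $\sum_{s\in S}d(s)-e(G[S])$ by $|S|(n-1)$ charges every edge at $S$ with full weight, so you lose the factor $\tfrac12$ that the handshake identity provides. You then correctly see that the bound is about $2n^2$, but you stop at two refinements that are never carried out.

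The second refinement is also doubtful as stated:
\begin{itemize}
\item All degrees in $S$ are at most $n-1<\beta$, so repeated degrees in $S$ are not excluded by the definition of $\beta$.
\item You give no actual length-five path between two equal-degree vertices of $S$. Your template $s_1\,u\,a\,w\,b\,v$ ends at $v$, whose degree $n+c$ differs from $d(s_1)$, so it yields nothing.
\end{itemize}
A $\lambda$-type bound on $S$ would therefore rest on an unproved structural claim.

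\textbf{The missing idea} is to bound the degree sum $\sum_z d(z)=2e(G)\ge 2n^2+2n$ directly, applying the complementary inequality to all of $D_{uv}$ at once.
\begin{enumerate}
\item Choose $w$ of maximum degree in $D_{uv}$. Then $d(z)+d(s)\le d(w)+d(s)\le 2n+1$ for every $z\in D_{uv}$ and every $s\in S$.
\item Since $|D_{uv}|=x-1-2c+2\cdot\mathbf{1}_{uv}<x=|B_{uv}|\le |S|$, you can match $D_{uv}$ injectively into $S$.
\item Each matched pair contributes at most $2n+1$, each unmatched vertex of $S$ at most $n-1$, and $d(u)+d(v)=2n+2c$. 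Using $x\le n+c-\mathbf{1}_{uv}$, this gives
\[
\sum_z d(z)\le 2n^2-n-4c+3x+6\cdot\mathbf{1}_{uv}-2\le 2n^2+2n-c+3\cdot\mathbf{1}_{uv}-2 .
\]
\item This forces $c=1$ and $uv\in E(G)$, with equality throughout. Hence $|D_{uv}|=n-1\ge 2$, and every matched pair sums to exactly $2n+1$.
\item A matched pair summing to $2n+1$ forces $d(z)\ge d(w)$, so every $z\in D_{uv}$ has $d(z)=d(w)\ge n+2>\beta$. Two vertices then share a degree above $\beta$, which is a contradiction.
\end{enumerate}

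The degree count by itself is tight when $c=1$ and $uv\in E(G)$, so some further input is needed in that case. The paper supplies it through the uniqueness of degrees above $\beta$, applied to $D_{uv}$. Your plan never invokes this, because your repeated-degree idea targets $S$, where no such restriction is available.
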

\begin{proof}
Choose $w\in D_{uv}$ such that $d(w)$ is maximal, and assume that $d(w) \ge n+2$. By Claim \ref{D2N+1}, for every $w_1\in A_u\cup B_{uv}\cup A_v$, we have $d(w)+d(w_1)\le 2n+1$. Consequently, $d(w_1)\le n-1$ and $d(z)+d(w_1)\le 2n+1$  for any $z\in D_{uv}$ and any $w_1\in A_u\cup B_{uv}\cup A_v$. Recall that $|D_{uv}| < |B_{uv}|$.
%and consequently $d(w_1)\le n-1$ because $d(w)\ge n+2$. Moreover, for any $z\in D_{uv}$ and any $w_1\in A_u\cup B_{uv}\cup A_v$, we have $d(z)+d(w_1)\le d(w)+d(w_1)\le 2n+1$.
%Recall that $|D_{uv}| = x-1-2c+2\mathbf{1}_{uv}$ and $|A_u\cup B_{uv}\cup A_v| = 2n-1-|D_{uv}|$. Note that $|D_{uv}| < |B_{uv}| = x$.
Now we can pair each vertex of $D_{uv}$ with a distinct vertex of $A_u\cup B_{uv}\cup A_v$, leaving $|A_u\cup B_{uv}\cup A_v| - |D_{uv}|$ vertices unpaired. For each paired couple $(z,w_1)$, we have $d(z)+d(w_1)\le 2n+1$, and each unpaired vertex has degree at most $n-1$. Let $D'$ be the set of vertices in $A_u\cup B_{uv}\cup A_v$ paired with $D_{uv}$. Therefore,
\begin{align*}    
\sum_{z\in V(G)} d(z) 
&=  d(u)+d(v) +\sum_{z\in D'\cup D_{uv}} d(z) + \sum_{z\in (A_u\cup B_{uv}\cup A_v)\setminus D'} d(z) \\
&\le d(u)+d(v)+|D_{uv}|(2n+1) + \left(|A_u\cup B_{uv}\cup A_v| - |D_{uv}|\right)(n-1)  \\
&= 2(n+c) +\left(x-1-2c+2\cdot\mathbf{1}_{uv}\right)(2n+1) + \left(2n-1-2\left(x-1-2c+2\cdot\mathbf{1}_{uv}\right)\right)(n-1)  \\
&= 2n^2 - n - 4c + 3x + 6\cdot\mathbf{1}_{uv} - 2 \\
&\le 2n^2+2n-c+3\cdot\mathbf{1}_{uv}-2. \label{n+1}
\end{align*}
The last inequality holds as $x=|B_{uv}|\le d(u)- \mathbf{1}_{uv}=n + c - \mathbf{1}_{uv}$.
It follows from $ e(G) \ge n^2+n$ that $3\cdot\mathbf{1}_{uv} - c - 2\geq 0.$ Since $\mathbf{1}_{uv} \in \{0,1\}$ and $c \ge 1$,
we conclude that $c = 1$ and $\mathbf{1}_{uv} = 1$.
Substituting these equalities into the previous estimates,
we find that every intermediate inequality must hold with equality.
Hence we obtain $x = n + c - \mathbf{1}_{uv} = n$
and consequently $|D_{uv}| = x - 1 - 2c + 2\cdot\mathbf{1}_{uv} = n - 1$.

Now $|D_{uv}|=n-1\ge 2$ (since $n\ge 11$). Recall that $d(w)$ is maximal and $d(w) \ge n+2$. If every vertex in $D_{uv}$ had degree $d(w)$, then $d(w)$ would appear at least twice, contradicting the definition of $\beta$. Thus there exists a vertex $z\in D_{uv}$ distinct from $w$ such that $d(z) < d(w)$. For this $z$, we have $d(z)+d(w_1) < d(w)+d(w_1) \le 2n+1$ for every $w_1\in A_u\cup B_{uv}\cup A_v$. This means that the required equality does not hold for the pair $(z,w_1)$,
%Consequently, in the pairing, the sum for the couple containing $z$ is strictly less than $2n+1$, 
a contradiction. Thus, we complete the proof of this claim.
\end{proof}
\begin{claim}\label{2.10}
    If $|D_{uv}| \neq 0$, then either $d(w) \le n$ for every vertex $w \in D_{uv}$, or $G$ is isomorphic to $K_{n,n+1}$.
\end{claim}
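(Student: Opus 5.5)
By Claim~\ref{2.9} every vertex of $D_{uv}$ has degree at most $n+1$. So I would assume that some $w\in D_{uv}$ has $d(w)=n+1$ and aim to show that $G\cong K_{n,n+1}$. The plan is a counting argument that pins down the degree sequence almost exactly, followed by a structural step that identifies the graph.

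\emph{Degree and adjacency constraints.} Apply Claim~\ref{D2N+1} to $w$. Every $w_1\in A_u\cup B_{uv}\cup A_v$ then satisfies either $d(w_1)\le n-1$, or $d(w_1)=n$ and $ww_1\in E(G)$. In particular every such $w_1$ has degree at most $n$, and every vertex of degree exactly $n$ in $A_u\cup B_{uv}\cup A_v$ lies in $N(w)$. Together with $d(z)\le n+1$ for all $z\in D_{uv}$ and $d(u)=d(v)=n+c$, this gives
\[
2e(G)\le 2(n+c)+|D_{uv}|(n+1)+\bigl(2n-1-|D_{uv}|\bigr)n = 2n^2+n+2c+|D_{uv}|.
\]
I would then substitute $|D_{uv}|=x-1-2c+2\cdot\mathbf{1}_{uv}$ and $x\le n+c-\mathbf{1}_{uv}$. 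Comparing with $2e(G)\ge 2n^2+2n$ leaves a slack $s$ of at most roughly $c-1+\mathbf{1}_{uv}$. This bound alone is not decisive, so I would sharpen it by also counting the vertices of degree at most $n-1$ in the middle set.

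\emph{Using the adjacency to $w$.} Let $M$ be the set of vertices of degree exactly $n$ in $A_u\cup B_{uv}\cup A_v$. Then $M\subseteq N(w)$, so $|M|\le n+1$. Every middle vertex outside $M$ loses at least $1$ from the bound above. This forces
\[
2n-1-|D_{uv}|-(n+1)\le s,
\quad\text{that is,}\quad
|D_{uv}|\ge n-2-s.
\]
Since $|D_{uv}|\le n-c-1+\mathbf{1}_{uv}$, combining these inequalities should force $c=1$, $\mathbf{1}_{uv}=1$, $x=n$, and $|D_{uv}|=n-1$, with all intermediate inequalities tight (or tight up to a single unit that I would have to track). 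I would then extract the following tight configuration: every vertex of $D_{uv}$ has degree $n+1$; the middle set consists of $n$ vertices of degree $n$, all adjacent to $w$; and $u,v$ are adjacent with $N(u)\setminus\{v\}=N(v)\setminus\{u\}$.

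\emph{Main obstacle: identifying the graph.} The final step, which I expect to be the hardest, is to show that this near-extremal configuration is exactly $K_{n,n+1}$, or else contains a forbidden path. Repeating the argument with each vertex of $D_{uv}$ in place of $w$ shows that all vertices of $D_{uv}$ are adjacent to all of $M$. The contradiction with $uv\in E(G)$ should then come from an explicit path of length five between two degree-$n$ vertices of $M$, routed through $u$, $v$, and vertices of $D_{uv}$. If an exact contradiction cannot be reached this way, I would fall back on Lemma~\ref{dudv}, applied to pairs of equal-degree vertices in $D_{uv}$ together with a common neighbor, to bound degrees. The delicate part is handling the off-by-one slack cases, namely where $c=2$ or $\mathbf{1}_{uv}=0$ leaves one unit of room; I would treat each of these by a short separate path-finding argument using the many common neighbors guaranteed by Observation~\ref{1}.
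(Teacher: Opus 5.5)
There is a genuine gap in the step that is supposed to pin down the configuration.

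\textbf{The counting does not force $c=1$.} The exact slack is $s=2c+|D_{uv}|-n$. So your inequality $2n-1-|D_{uv}|-(n+1)\le s$ only says $|D_{uv}|\ge n-1-c$. This is compatible with $|D_{uv}|\le n-1-c+\mathbf{1}_{uv}$ for every $c\ge 1$, so the bound $|M|\le n+1$ carries no information about $c$.

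\textbf{The target configuration is the wrong one.} Your ``tight configuration'' cannot occur: its degree sum $2(n+1)+(n-1)(n+1)+n^2=2n^2+2n+1$ is odd. In $K_{n,n+1}$, with $u,v$ on the $n$-side, one has $uv\notin E(G)$, $c=1$, $x=n+1$, $A_u=A_v=\emptyset$ and $|D_{uv}|=n-2$. The middle set is $B_{uv}$, and all its vertices have degree $n$. You list $\mathbf{1}_{uv}=0$ among the off-by-one cases to be eliminated by path-finding. But that is exactly where $K_{n,n+1}$ lives, and it contains no forbidden path. So, as written, your plan never reaches the conclusion $G\cong K_{n,n+1}$.

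\textbf{The missing ingredient is structural rather than numerical.} Let $w_1$ be a middle vertex with $d(w_1)=n$, so $ww_1\in E(G)$ by Claim~\ref{D2N+1}. Then $w_1$ has no neighbour in $A_u\cup B_{uv}\cup A_v$. Otherwise take such a neighbour $w_2$, say $w_2\in N(u)$, and take $w_3\in (N(w)\cap N(v))\setminus\{w_1,w_2\}$. Such $w_3$ exists because $|N(w)\cap N(v)|\ge c+2\ge 3$ by Observation~\ref{1}. Then $uw_2w_1ww_3v$ is a forbidden path. Hence $N(w_1)\subseteq\{u,v\}\cup D_{uv}$ and $|D_{uv}|\ge n-2$. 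Combined with $|D_{uv}|\le n-c-1+\mathbf{1}_{uv}$, this gives two cases.
\begin{itemize}
\item If $uv\notin E(G)$, then $c=1$, $x=n+1$ and $A_u=A_v=\emptyset$. The degree count is now tight, so every vertex of $B_{uv}$ has degree $n$. By the no-middle-neighbour property, each is adjacent to all of $\{u,v\}\cup D_{uv}$, which yields $K_{n,n+1}$.
\item If $uv\in E(G)$, then $c\le 2$. Suppose $u_1,u_2\in D_{uv}$ both have degree $n+1$, and choose $w_1\in N(u_1)\cap N(u)$ and $w_2\in (N(u_2)\cap N(v))\setminus\{w_1\}$. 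Then $u_1w_1uvw_2u_2$ is a forbidden path, so at most one vertex of $D_{uv}$ has degree $n+1$. Counting then gives $n\le 2c+1\le 5$, a contradiction.
\end{itemize}
Finally, if no middle vertex has degree $n$, the direct count $2e(G)\le 2n^2+n+2\mathbf{1}_{uv}-1$ already contradicts $e(G)\ge n^2+n$.
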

\begin{proof}
 By Claim \ref{2.9}, we have $d(w) \le n+1$ for every vertex $w \in D_{uv}$. Suppose that there exists a vertex $w\in D_{uv}$ with $d(w)=n+1$. By Claim~\ref{D2N+1}, for any $w_1 \in A_u \cup B_{uv} \cup A_v$,
 we have either $d(w_1) \le n-1$,
or $d(w_1) = n$ and $ww_1 \in E(G)$ as $d(w) = n+1$.
In particular, $d(w_1) \le n$ for all $w_1 \in A_u \cup B_{uv} \cup A_v$.

\bigskip
{\bf Case 1}. There exists $w_1\in A_u\cup B_{uv}\cup A_v$ with $d(w_1)=n$ and $ww_1 \in E(G)$.
\medskip

We first show that $w_1$ has no neighbor in $A_u\cup B_{uv}\cup A_v$. Suppose, for contradiction, that there exists $w_2\in N(w_1)\cap (A_u\cup B_{uv}\cup A_v)$. Without loss of generality, assume $w_2\in N(u)$. Since $w\in D_{uv}$, we have $wu,wv\notin E(G)$. Applying  Observation \ref{1} to $(w,v)$ yields $|N(w)\cap N(v)|\ge 3$. Hence we can pick $w_3\in N(w)\cap N(v)$ distinct from $w_1$ and $w_2$. Then $u w_2 w_1 w w_3 v$ is a path of length five whose endpoints $u$ and $v$ have the same degree, a contradiction. Thus, $N(w_1)\cap (A_u\cup B_{uv}\cup A_v)=\emptyset$.

Since $d(w_1) = n$ and $N(w_1) \cap (A_u \cup B_{uv} \cup A_v) = \emptyset$, we have $|N(w_1) \cap D_{uv}| \ge n - 2$.
Consequently, $|D_{uv}| \ge n - 2$.
Recall that $|D_{uv}| = x - 1 - 2c + 2\cdot\mathbf{1}_{uv}$. This together with  $x \le n + c - \mathbf{1}_{uv}$ implies that
\begin{equation}\label{eq:Duv_bound}
|D_{uv}|  \le n - c - 1 + \mathbf{1}_{uv}.
\end{equation}

If $\mathbf{1}_{uv}=0$, then $|D_{uv}|\le n-c-1\le n-2$. This together with $|D_{uv}| \ge n - 2$ implies that $|D_{uv}|=n-2$ and $c=1$. It follows that $|B_{uv}|=x=n+1$ and $|A_u|=|A_v|=0$.
Note that every vertex in $D_{uv}\cup\{u,v\}$ has degree at most $n+1$
and every vertex in $B_{uv}$ has degree at most $n$.
%with $d(u)=d(v)=n+1$,
If there exists a vertex in $B_{uv}$ with degree at most $n-1$,
then
\[
\sum_{z\in V(G)} d(z) \le (|D_{uv}|+2)(n+1) + n|B_{uv}| - 1 = 2n^2+2n-1,
\]
a contradiction.
This implies that all $x = n+1$ vertices in $B_{uv}$ have degree  $n$.
Recall that $N(w_1)\cap B_{uv}=\emptyset$ for any vertex $w_1 \in B_{uv}$ with $d(w_1)=n$. This implies that both $B_{uv}$ and $D_{uv}$ are independent sets.
A straightforward verification shows that $G$ is isomorphic to $K_{n,n+1}$.

If $\mathbf{1}_{uv}=1$, then \eqref{eq:Duv_bound} implies that $|D_{uv}| \le n - c$.
Since $|D_{uv}| \ge n - 2$, we obtain $1 \le c \le 2$. We now argue that  no two vertices in $D_{uv}$ can have same degree $n+1$. Indeed, if $u_1,u_2\in D_{uv}$ both have degree $n+1$, then applying  Observation \ref{1} to the pairs $(u,u_1)$ and $(v,u_2)$ yields at least three common neighbors for each pair. Suppose that $w_1 \in N(u_1) \cap N(u)$ and $w_2 \in N(u_2) \cap N(v) \setminus \{w_1\}$.
Note also that $u_1$ and $u_2$ belong to $D_{uv}$.
Then $u_1 w_1 u v w_2 u_2$ is a path of length five whose endpoints $u_1$ and $u_2$ have the same degree $n+1$,
a contradiction. Therefore, at most one vertex in $D_{uv}$ has degree $n+1$, and all others have degree at most $n$. This together with $|D_{uv}| = x - 1 - 2c + 2\cdot\mathbf{1}_{uv}$ implies that
\begin{align*}
    \sum_{z\in V(G)} d(z) &\le (|D_{uv}|-1)n + (n+1) + (2n-1-|D_{uv}|)n + d(u)+d(v)\\&=(x-2c)n+(n+1)+(2n-1-(x+1-2c))n+2(n+c)\\
    &=2n^2+n+2c+1.
\end{align*}
Since  $e(G)\ge n^2 + n$, we have $n\le 2c+1$. This leads to a contradiction as $n \ge 11$ and $1 \le c \le 2$.

\bigskip
{\bf Case 2}. For every vertex $w \in A_u\cup B_{uv}\cup A_v$, we have $d(w) \le n-1$.
\medskip

Since every vertex in $D_{uv}$ has degree at most $n + 1$
and every vertex in $A_u \cup B_{uv} \cup A_v$ has degree at most $n - 1$,
we conclude that
\begin{align*}
\sum_{z\in V(G)} d(z)
&\le |D_{uv}|(n + 1) + (2n - 1 - |D_{uv}|)(n - 1) + d(u) + d(v)\\
&= (x - 1 - 2c + 2\cdot\mathbf{1}_{uv})(n + 1)
 + (2n - 1 - (x - 1 - 2c + 2\cdot\mathbf{1}_{uv}))(n - 1) + 2(n + c)\\
&= 2n^2 - n - 2c + 2x + 4\cdot\mathbf{1}_{uv} - 1\\
&\le 2n^2 + n + 2\cdot\mathbf{1}_{uv} - 1,
\end{align*}
where the last inequality holds due to $x \le n + c - \mathbf{1}_{uv}$ .
Since $n \ge 11$ and $\mathbf{1}_{uv} \in \{0, 1\}$, this contradicts our assumption that $e(G)\ge n^2 + n$.
\end{proof}

\begin{claim}\label{2.11}
    For any two vertices $w_1, w_2 \in A_u \cup B_{uv} \cup A_v$ with $d(w_1) \ge n + 1$ and $d(w_2) \ge n + 1$, we have $w_1w_2 \in E(G)$.
\end{claim}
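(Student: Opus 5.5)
The plan is to argue by contradiction: suppose $w_1,w_2\in A_u\cup B_{uv}\cup A_v$ satisfy $d(w_1),d(w_2)\ge n+1$ but $w_1w_2\notin E(G)$. I will then build a path of length five between $u$ and $v$, whose endpoints both have degree $\beta=n+c$. The target shape is
\[ u\,w_1\,a\,w_2\,b\,v \quad\text{or}\quad u\,w_2\,a\,w_1\,b\,v, \]
where $a\in N(w_1)\cap N(w_2)$ and $b$ is a common neighbour of $v$ and the third vertex of the path. All six vertices must be distinct.

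\textbf{Supply of middle vertices.} Observation \ref{1} applied to $(w_1,w_2)$ with $\mathbf{1}_{w_1w_2}=0$ gives
\[ |N(w_1)\cap N(w_2)|\ge (n+1)+(n+1)+0-(2n+1)+2=3. \]
So there are at least three candidates for $a$, and after discarding $u$ and $v$ at least one remains.

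\textbf{Supply of end vertices.} Next I apply Observation \ref{1} to $(w_i,v)$ and to $(w_i,u)$, using $d(u)=d(v)=n+c$ with $c\ge 1$:
\begin{itemize}
\item if $w_i$ is not adjacent to $v$ (that is, $w_i\in A_u$), then $|N(w_i)\cap N(v)|\ge (n+1)+(n+c)-(2n+1)+2=c+2\ge 3$;
\item if $w_i\in B_{uv}\cup A_v$, then $|N(w_i)\cap N(v)|\ge c+|D_{uv}|\ge 1$.
\end{itemize}
The same bounds hold with $u$ in place of $v$.

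\textbf{Case split.} Up to the symmetries $u\leftrightarrow v$ and $w_1\leftrightarrow w_2$, the cases are:
\begin{itemize}
\item both $w_1,w_2\in A_u$;
\item $w_1\in A_u$ and $w_2\in A_v$;
\item $w_1\in A_u$ and $w_2\in B_{uv}$;
\item both $w_1,w_2\in B_{uv}$.
\end{itemize}
In each case I take $w_1$ as the vertex adjacent to $u$. If $w_2\in A_u$, then $w_2\notin N(v)$, so the second bound gives at least three choices of $b\in N(w_2)\cap N(v)$. At most $u$, $w_1$ and the chosen $a$ must be avoided. Since $u\notin N(v)$ unless $\mathbf{1}_{uv}=1$, a little bookkeeping leaves a valid $b$. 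When $w_2\in A_v$ the same count works, with $w_1$ and $w_2$ playing symmetric roles.

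\textbf{Main obstacle.} I expect the hard case to be $w_2\in B_{uv}$, and especially both vertices in $B_{uv}$. There the lower bound on $|N(w_2)\cap N(v)|$ is only $c+|D_{uv}|$, which may equal $1$, so $b$ could coincide with $a$ or $w_1$. My first attempt exploits the freedom in the construction:
\begin{itemize}
\item choose $a$ among the at least three common neighbours of $w_1,w_2$ after first fixing $b$;
\item swap which of $w_1,w_2$ is attached to $u$ (both are adjacent to $u$ when they lie in $B_{uv}$);
\item swap the roles of $u$ and $v$.
\end{itemize}
If all such choices collide, then $N(w_2)\cap N(v)$ and $N(w_1)\cap N(u)$ are tiny, and I would also need $|N(w_1)\cap N(w_2)|$ to be tiny. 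But $d(w_2)\ge n+1$ while $d(v)=n+c$, so Observation \ref{1} forces $|D_{uv}|$ and $|A_u|+|A_v|$ into a very restricted range. Together with $|D_{uv}|=x-1-2c+2\mathbf{1}_{uv}$, I would then aim to derive either an explicit extra common neighbour or a contradiction with the edge count $e(G)\ge n^2+n$, in the style of Claims \ref{2.9} and \ref{2.10}.
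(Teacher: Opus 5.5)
You handle the configurations with $w_1$ or $w_2$ outside $B_{uv}$ essentially correctly; this matches the paper's opening steps, which show $w_2v\in E(G)$ and then $w_1v,w_2u\in E(G)$, i.e.\ reduce to $w_1,w_2\in B_{uv}$. Two small fixes. For $w_i\in N(v)$ the bound is $c+|D_{w_iv}|$, not $c+|D_{uv}|$. For $w_1\in A_u$, $w_2\in A_v$ it is not ``the same count''; instead fix $b\in N(w_2)\cap N(v)$ first and take $a\in N(w_1)\cap N(w_2)\setminus\{b\}$, which works since here $u,v\notin N(w_1)\cap N(w_2)$.

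The genuine gap is the case $w_1,w_2\in B_{uv}$. All of the paper's work is there, and you offer only a hope. Moreover, the tools you propose for it --- $u$--$v$ paths of length five, Observation~\ref{1}, and an edge count --- provably cannot suffice. Let $G_0$ be $K_{n,n+1}$ with parts $X,Y$ ($|X|=n$) plus one edge $uv$ inside $Y$. Then $d(u)=d(v)=\beta=n+1$, $e(G_0)=n^2+n+1$, and $B_{uv}=X$ is an independent set of vertices of degree $n+1$. The conclusions of Lemma~\ref{dudv} and Claims~\ref{2.9},~\ref{2.10} also hold in $G_0$. Yet $G_0$ has no $u$--$v$ path of length five, because $G_0-uv$ is bipartite with $u,v$ on the same side. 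So the contradiction must come from a path joining some \emph{other} pair of equal-degree vertices.

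This is what the paper does. Fix $u_1\in (N(w_1)\cap N(w_2))\setminus\{u,v\}$. The collision analysis you describe then gives $N(w_2)\cap N(v)\subseteq\{u,u_1\}$ and $N(w_1)\cap N(u)\subseteq\{v,u_1\}$.

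If $d(w_2)\ne d(v)$, then $d(w_2)+d(v)\ge 2n+3$, so Observation~\ref{1} gives $N(w_2)\cap N(v)=\{u,u_1\}$. The path $vu_1w_2u_2w_1u$ then forces $N(w_1)\cap N(w_2)=\{u,v,u_1\}$, whence $D_{w_1w_2}=\emptyset$. Hence $N(v)\setminus\{w_1,w_2\}\subseteq N(w_1)\cup N(w_2)$, so $|N(v)\cap N(w_1)|\ge 5$ for $n\ge 11$, producing the $u$--$v$ path $vu_2w_1u_1w_2u$. This is a local covering argument, not a global edge count.

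If $d(w_2)=d(v)$ --- precisely the situation of $G_0$ --- the paper switches to the equal-degree pair $(w_2,v)$. Any $u_2\in B_{uv}\setminus\{u_1,w_1,w_2\}$ gives the path $w_2u_1w_1uu_2v$, so $|B_{uv}|\le 3$ and hence $|D_{uv}|\le 2$ by Observation~\ref{1}. Then $N(w_1)\setminus(D_{uv}\cup\{u,v\})\subseteq N(u)\cup N(v)$ yields $|N(w_1)\cap N(v)|\ge 4$ and the path $uw_2u_1w_1u_2v$.

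Both ingredients are missing from your proposal, above all the switch to the pair $(w_2,v)$.
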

\begin{proof}
Suppose that $w_1w_2\notin E(G)$. Without loss of generality, we may further assume that $w_1\in N(u)$. By Observation \ref{1} and $d(w_1),d(w_2)\geq n+1$, we have
\begin{align}\label{ge3}
    |N(w_1)\cap N(w_2)|\geq3.
\end{align}
So there exists $u_1\in(N(w_1)\cap N(w_2))\setminus\{u,v\}$.
Since $w_1w_2 \notin E(G)$, we have $w_1 \notin N(w_2) \cap N(v)$. If there exists $u_2 \in (N(w_2) \cap N(v)) \setminus \{u, u_1\}$, then $uw_1u_1w_2u_2v$ is a path of length five whose endpoints $u$ and $v$ have the same degree, a contradiction. Thus,
\begin{align}\label{w_2v}    
N(w_2) \cap N(v) \subseteq \{u, u_1\}.
\end{align}
If $vw_2 \notin E(G)$, then Observation \ref{1} implies $|N(w_2) \cap N(v)| \ge 3$, which contradicts \eqref{w_2v}. Therefore, $vw_2 \in E(G)$.
Similarly, since $w_1w_2 \notin E(G)$, we have $w_2 \notin N(w_1) \cap N(u)$. If there exists $u_2 \in (N(w_1) \cap N(u)) \setminus \{v, u_1\}$, then $uu_2w_1u_1w_2v$ is a path of length five whose endpoints $u$ and $v$ have the same degree, a contradiction. Thus,
\begin{align}\label{w_1u}    
N(w_1) \cap N(u) \subseteq \{v, u_1\}.
\end{align}

We next prove that $w_1v, w_2u \in E(G)$.
Without loss of generality, we may assume that $w_1v \notin E(G)$.
It follows that $v \notin N(w_1)\cap N(w_2)$ and $v \notin N(w_1)\cap N(u)$.
By \eqref{ge3}, there exist distinct vertices $u_1, u_2 \in (N(w_1)\cap N(w_2))\setminus\{u, v\}$.
By  Observation \ref{1}, there exists $u_3 \in (N(w_1) \cap N(u)) \setminus \{v\}$. This together with \eqref{w_1u} implies that $\{u_1\} = \{u_3\} = N(w_1) \cap N(u)$. Furthermore, since $w_2v \in E(G)$, $uu_3w_1u_2w_2v$ is a path of length five whose endpoints $u$ and $v$ have the same degree, a contradiction.

Now we proceed by distinguishing two cases based on whether $d(w_2) = d(v)$ or not.

{\bf Case 1}. $d(w_2) \neq d(v)$.

Since $d(w_2) \ge n + 1$, $d(v) \ge n + 1$, and $d(w_2) \neq d(v)$, we have $d(w_2) + d(v) \ge 2n + 3$. Applying Observation~\ref{1} to the pair $(w_2, v)$ yields that
\[
|N(w_2) \cap N(v)|  \ge 2. 
\]
By \eqref{w_2v}, we have $N(w_2) \cap N(v) = \{u, u_1\}$, which implies $u_1v \in E(G)$. If there exists a vertex $u_2 \in N(w_1) \cap N(w_2)$ with $u_2 \notin \{u, v, u_1\}$, then $v u_1 w_2 u_2 w_1 u$ is a path of length five whose endpoints $u$ and $v$ have the same degree, a contradiction. Hence, by \eqref{ge3}, we have $N(w_1) \cap N(w_2) = \{u, v, u_1\}$.  
This together with Observation \ref{1} implies that $D_{w_1w_2} = \emptyset$.
%, since $N(w_1) \cap N(w_2) = \{u, v, u_1\}$ and $N(w_2) \cap N(v) = \{u, u_1\}$, we get $D_{w_1w_2} = \emptyset$ and $D_{w_2v} = \emptyset$.
Since $d(v) \ge n + 1$, we obtain
\[
|N(v) \setminus (D_{w_1w_2} \cup \{w_1, w_2\})| \ge n - 1.
\]
Note also that $N(v) \setminus (D_{w_1w_2} \cup \{w_1, w_2\}) \subseteq N(w_1) \cup N(w_2)$.
This deduces that either $|N(v) \cap N(w_1)| \ge \frac{n - 1}{2}$ or $|N(v) \cap N(w_2)| \ge \frac{n - 1}{2}$, which together with \eqref{w_2v} yields that $|N(v) \cap N(w_1)| \ge 5$ as $n \ge 11$.
%Since $n \ge 17$, we conclude that either $|N(v) \cap N(w_1)| \ge 8$ or $|N(v) \cap N(w_2)| \ge 8$.
%By \eqref{w_2v}, we further get $|N(v) \cap N(w_1)| \ge 8$.
Thus, we can find a vertex $u_2 \in N(w_1) \cap N(v)$ distinct from $u, w_2, u_1$.
It follows from $uw_2 \in E(G)$ that $v u_2 w_1 u_1 w_2 u$ is a path of length five whose endpoints $u$ and $v$ have the same degree, a contradiction.

{\bf Case 2}. $d(w_2) = d(v)$.

Suppose that there exists a vertex $u_2$ distinct from $u_1, w_1, w_2$ such that $u_2 \in N(u) \cap N(v)$. Then $w_2u_1w_1uu_2v$ is a path of length five whose endpoints $w_2$ and $v$ have the same degree, a contradiction. Thus,
$
|N(u) \cap N(v)| \le 3.
$
By Observation \ref{1}, we have $|D_{uv}| \le 2$. Since $d(w_1) \ge n + 1$, it follows that
\[
|N(w_1) \setminus (D_{uv} \cup \{u, v\})| \ge n - 3.
\]
Note also that
$N(w_1) \setminus (D_{uv} \cup \{u, v\}) \subseteq N(u) \cup N(v).$
Therefore, either $|N(w_1) \cap N(u)| \ge \frac{n - 3}{2}$ or $|N(w_1) \cap N(v)| \ge \frac{n - 3}{2}$, yielding that $|N(w_1) \cap N(v)| \ge 4$ in view of \eqref{w_1u} and $n \ge 11$.
%As $n \ge 17$, we have either $|N(w_1) \cap N(u)| \ge 7$ or $|N(w_1) \cap N(v)| \ge 7$.  
%Without loss of generality, assume $|N(w_1) \cap N(v)| \ge 7$. 
Then there exists a vertex $u_2$ distinct from $u_1, u, w_2$ such that $u_2 \in N(w_1) \cap N(v)$. Thus, $uw_2u_1w_1u_2v$ is a path of length five whose endpoints $u$ and $v$ have the same degree, a contradiction.
\end{proof}
\begin{claim}\label{2.15}
    The number of vertices in $A_u \cup B_{uv} \cup A_v$ with degree at least $n + 1$ is at most two.
\end{claim}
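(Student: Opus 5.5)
We argue by contradiction. Suppose three distinct vertices $w_1,w_2,w_3\in A_u\cup B_{uv}\cup A_v$ all have degree at least $n+1$. By Claim~\ref{2.11} they are pairwise adjacent, so they form a triangle. The plan is to use this triangle as the middle of a path of length five from $u$ to $v$, which is forbidden because $d(u)=d(v)$.

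The template is a path $u\,a\,b\,c\,z\,v$ where $\{a,b,c\}=\{w_1,w_2,w_3\}$, $a\in N(u)$ and $z\in (N(c)\cap N(v))\setminus\{u,a,b\}$. The available common neighbours come from Observation~\ref{1}. Since $d(c)\ge n+1$ and $d(v)=n+c'\ge n+1$, where $c'$ is the constant $c$ in $\beta=n+c$:
\begin{itemize}
\item if $cv\notin E(G)$, then $|N(c)\cap N(v)|\ge 3$;
\item if $cv\in E(G)$, then $|N(c)\cap N(v)|\ge 1+|D_{cv}|$, and the bound improves to $\ge 2$ whenever $d(c)+d(v)\ge 2n+3$.
\end{itemize}
The symmetric statements hold for $u$.

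\textbf{Step 1: some $w_i$ misses $v$ (or, symmetrically, $u$).} Say $w_3v\notin E(G)$, so $|N(w_3)\cap N(v)|\ge 3$. Since $w_3\in A_u\cup B_{uv}\cup A_v$ and $w_3\notin N(v)$, we have $w_3\in A_u$. Apply the template with $c=w_3$ and choose $z$ from the at least three common neighbours. We need $a\in N(u)\cap\{w_1,w_2\}$ and $z\notin\{u,a,b\}$. The vertices $u$, $a$ and $b$ exclude at most three candidates, but $u\notin N(v)$ here if $uv\notin E(G)$, and $w_3\notin N(w_3)$. So only small configurations remain, and these should be closed by reversing roles (using $w_3\in N(u)$ as $a$ instead) or by reusing the containments~\eqref{w_2v} and~\eqref{w_1u} from the proof of Claim~\ref{2.11}. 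The same argument applies with $u$ and $v$ swapped. We may therefore assume every $w_i$ lies in $B_{uv}$.

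\textbf{Step 2: all three $w_i$ lie in $B_{uv}$.} Now Lemma~\ref{dudv} applies to each $w_i$ and gives
$$d(w_i)\le 2n+1-(n+c')+2+\mathbf{1}_{uv}\le n+2+\mathbf{1}_{uv}.$$
So each $d(w_i)$ lies in $\{n+1,\dots,n+3\}$. Degrees above $\beta$ occur at most once, and the pair $u,v$ already attains $\beta$. I will use this to force some ordered choice with $d(c)+d(v)\ge 2n+3$ or with $D_{cv}\neq\emptyset$. That gives two vertices of $N(c)\cap N(v)$, and we need one of them outside $\{u,a,b\}$. Since $a,b\in N(v)$ as well, I will instead try the alternative path $u\,a\,x\,b\,c\,v$ with $x\in (N(a)\cap N(b))\setminus\{u,v,c\}$. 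This holds for any ordering of the triangle, because every $w_i$ is adjacent to both $u$ and $v$. So it suffices that some pair $w_i,w_j$ has at least four common neighbours.

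\textbf{Main obstacle.} The hard part is Step~2, namely producing that fourth common neighbour. Observation~\ref{1} only gives $|N(w_i)\cap N(w_j)|=d(w_i)+d(w_j)+|D_{w_iw_j}|-2n-1$. I would first try to show that some pair has degree sum at least $2n+4$, using the restricted degree range together with the facts that $\beta=n+c'$ and that repeated degrees cannot exceed $\beta$. If that fails, the remaining extremal case has all three $w_i$ of degree exactly $n+1$ and $D_{w_iw_j}=\emptyset$. There I would fall back on the edge count $e(G)\ge n^2+n$, as in Claims~\ref{2.9} and~\ref{2.10}, to rule the configuration out.
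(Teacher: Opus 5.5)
There is a genuine gap: neither step is actually closed, and the one idea that closes both is missing.

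In Step~1, the subcases $uv\notin E(G)$ and ``neither $w_1$ nor $w_2$ lies in $N(u)$'' do work. The configuration you set aside as ``small'' is, however, precisely the hard case: $uv\in E(G)$, $w_3\in A_u$, and $N(w_3)\cap N(v)=\{u,w_1,w_2\}$ with, say, $w_1\in N(u)$. Every template $u\,a\,b\,w_3\,z\,v$ fails here. Reversing roles ($a=w_3$) needs either a common neighbour of $v$ and $w_1$ (or $w_2$) outside $\{u,w_1,w_2\}$, or a fourth common neighbour of $w_1,w_2$. Observation~\ref{1} provides neither, since for an adjacent pair of degrees $\ge n+1$ it only guarantees $1+|D|$ common neighbours. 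Nor can you reuse \eqref{w_2v} and \eqref{w_1u}: those containments were derived in the proof of Claim~\ref{2.11} under the hypothesis $w_1w_2\notin E(G)$, which is false here.

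In Step~2 you correctly reduce to finding a fourth common neighbour of some pair, but the proposed routes do not deliver it.
\begin{itemize}
\item A degree sum of $2n+4$ only gives $|N(w_i)\cap N(w_j)|\ge 3+|D_{w_iw_j}|$, which is not $4$ unless $D_{w_iw_j}\neq\emptyset$.
\item Lemma~\ref{dudv} and the definition of $\beta$ do not force large degree sums; all three degrees may equal $n+1$.
\item Your ``extremal case'' with $D_{w_iw_j}=\emptyset$ cannot occur. The vertices $u,v,w_k$ are already three common neighbours, so Observation~\ref{1} gives $|D_{w_iw_j}|\ge 2n+4-d(w_i)-d(w_j)=2$ when both degrees are $n+1$.
\end{itemize}
The fallback to the global edge count is not an argument.

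The missing idea, which the paper uses in both places, is a covering argument with the third vertex. Suppose the common neighbourhood of an adjacent pair $w_i,w_j$ is confined to a set of at most three vertices. Then Observation~\ref{1} forces $|D_{w_iw_j}|\le 2$. Since $N(w_k)\setminus(D_{w_iw_j}\cup\{w_i,w_j\})\subseteq N(w_i)\cup N(w_j)$ and $d(w_k)\ge n+1$, the vertex $w_k$ has at least $(n-3)/2\ge 4$ common neighbours with $w_i$ or with $w_j$ (this uses $n\ge 11$). One of them lies outside the few forbidden vertices and closes a path through the triangle.

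In your Step~2, any $y\in N(w_i)\cap N(w_j)\setminus\{u,v,w_k\}$ gives $u\,w_i\,y\,w_j\,w_k\,v$. So the argument applies with confining set $\{u,v,w_k\}$ and yields, e.g., $v\,w_j\,w_k\,x\,w_i\,u$.

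In your Step~1 residual case, any $y\in N(w_1)\cap N(w_3)\setminus\{u,w_2\}$ gives $u\,w_3\,y\,w_1\,w_2\,v$, so $|D_{w_1w_3}|\le 1$. Covering $N(w_2)$ by $N(w_1)\cup N(w_3)$ then gives a vertex $x$ completing $u\,w_3\,w_2\,x\,w_1\,v$ or $u\,w_3\,x\,w_2\,w_1\,v$.

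This is exactly the paper's proof: fix $w_1\in N(u)$, first force $w_2,w_3\in N(v)$, then cover. No appeal to Lemma~\ref{dudv} or to $e(G)\ge n^2+n$ is needed.
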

\begin{proof}
   Suppose, for contradiction, that there exist three distinct vertices $w_1,w_2,w_3\in A_u\cup B_{uv}\cup A_v$ with $d(w_i)\ge n+1$ for $i=1,2,3$. By Claim \ref{2.11}, they are pairwise adjacent. Without loss of generality, we may assume that $w_1\in N(u)$.

We first show that  $w_2\in N(v)$. Suppose that $w_2v\notin E(G)$. Applying Observation \ref{1} to $(w_2,v)$ yields that $|N(w_2)\cap N(v)|\ge 3$. If there exists $u_1\in N(w_2)\cap N(v)$ with $u_1\notin\{w_1,w_3,u\}$, then $uw_1w_3w_2u_1v$ is a path of length five whose endpoints $u$ and $v$ have the same degree, a contradiction. Hence $N(w_2)\cap N(v)=\{w_1,w_3,u\}$, so $w_1,w_3\in N(v)$ and $uv\in E(G)$. Now consider $N(w_1)\cap N(w_2)$. If there exists $y\in N(w_1)\cap N(w_2)$ with $y\notin\{w_3,u\}$, then $uw_2yw_1w_3v$ is a path of length five whose endpoints $u$ and $v$ have the same degree, a contradiction. Thus $N(w_1)\cap N(w_2)\subseteq\{w_3,u\}$. By Observation \ref{1}, we obtain $|D_{w_1w_2}|\le 1$. Since $d(w_3) \ge n + 1$, it follows that
\[
|N(w_3) \setminus (D_{w_1w_2} \cup \{w_1, w_2\})| \ge n - 2.
\]
This together with $n \ge 11$ implies that either $|N(w_3) \cap N(w_1)| \ge \frac{n - 2}{2} >4$ or $|N(w_3) \cap N(w_2)| \ge \frac{n - 2}{2} > 4$.  
Without loss of generality, we may assume that $|N(w_3) \cap N(w_1)| > 4$. Then there exists a vertex $u_1 \in N(w_3) \cap N(w_1)$ distinct from $w_2, u, v$. Thus, $uw_2w_3u_1w_1v$ is a path of length five whose endpoints $u$ and $v$ have the same degree, a contradiction. Therefore, $w_2v \in E(G)$. By symmetry, $w_3v\in E(G)$.

Now, we have $\{w_2, w_3\} \subseteq N(v)$. If there exists a vertex $w \in N(w_1) \cap N(w_2) \setminus \{w_3, v, u\}$. Then $uw_1ww_2w_3v$ is a path of length five whose endpoints $u$ and $v$ have the same degree, a contradiction. Thus, $N(w_1) \cap N(w_2) \subseteq \{w_3, v, u\}$. By Observation~\ref{1}, we have $|D_{w_1w_2}| \le 2$. 
Since $d(w_3) \ge n + 1$, it follows that
\[
|N(w_3) \setminus (D_{w_1w_2} \cup \{w_1, w_2\})| \ge n - 3.
\]
This implies that either $|N(w_3) \cap N(w_1)| \ge \frac{n - 3}{2} \ge 4$ or $|N(w_3) \cap N(w_2)| \ge \frac{n - 3}{2} \ge 4$ as $n \ge 11$.
Without loss of generality, we may assume that $|N(w_3) \cap N(w_1)| \ge 4$. Then there exists a vertex $u_1 \in N(w_3) \cap N(w_1)$ distinct from $w_2, u, v$. Thus, $vw_2w_3u_1w_1u$ is a path of length five whose endpoints $u$ and $v$ have the same degree, a contradiction. 
This completes the proof of this claim.
\end{proof}
\begin{claim}\label{2n+4}
    For any vertices $w_1, w_2 \in A_u \cup B_{uv} \cup A_v$, we have $d(w_1) + d(w_2) \le 2n + 4$.
\end{claim}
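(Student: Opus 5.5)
The plan is to argue by contradiction. Suppose there are $w_1,w_2\in A_u\cup B_{uv}\cup A_v$ with $d(w_1)+d(w_2)\ge 2n+5$, and assume without loss of generality that $d(w_1)\ge d(w_2)$. Then $d(w_1)\ge n+3$. The goal is to build a path $u\,a\,b\,c\,e\,v$ of length five, or to use the maximality of $\beta$, exactly as in Claims~\ref{2.11} and~\ref{2.15}.

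First, restrict where $w_1$ can lie. The bound $|V(G)|-d(u)+2+\mathbf{1}_{uv}=2n+1-(n+c)+2+\mathbf{1}_{uv}\le n+2$ holds whenever $\mathbf{1}_{uv}=0$ or $c\ge2$. In those situations Lemma~\ref{dudv} forces $w_1\notin B_{uv}$. In the remaining case ($c=1$, $uv\in E(G)$) the bound is $n+3$, which still leaves little room. So typically $w_1\in A_u$; by symmetry I will assume $w_1\in N(u)$ throughout.

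Next, apply Observation~\ref{1} to the pair $(w_1,w_2)$. This gives $|N(w_1)\cap N(w_2)|\ge d(w_1)+d(w_2)-(2n+1)+2(1-\mathbf{1}_{w_1w_2})\ge 4$, and at least $6$ if $w_1w_2\notin E(G)$. Then split according to $d(w_2)$.

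\textbf{Case 1: $d(w_2)\ge n+1$.} Claim~\ref{2.11} gives $w_1w_2\in E(G)$, and Claim~\ref{2.15} says these are the only two high-degree vertices of $A_u\cup B_{uv}\cup A_v$. Since $d(w_2)+d(v)\ge 2n+2$, Observation~\ref{1} gives $N(w_2)\cap N(v)\neq\emptyset$ if $w_2v\in E(G)$, and at least three common neighbours otherwise. I then try to complete the path $u\,w_1\,x\,w_2\,y\,v$, choosing $x\in N(w_1)\cap N(w_2)$ and $y\in N(w_2)\cap N(v)$ distinct from everything else. The at least $4$ choices of $x$ make this possible unless $N(w_2)\cap N(v)$ is contained in a set of size at most two. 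That degenerate subcase I would handle like Claim~\ref{2.11}: first show it forces $w_2v,w_1v\in E(G)$, then route the path through $v\,w_2\,x\,w_1\,z\,u$ with $z\in N(w_1)\cap N(u)$.

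\textbf{Case 2: $d(w_2)\le n$.} Then $d(w_1)\ge n+5$, so $d(w_1)+d(v)\ge 2n+6$. Observation~\ref{1} gives at least $5$ common neighbours of $w_1$ and $v$, and similarly at least $5$ for $w_1$ and $u$. The path $u\,a\,w_1\,b\,c\,v$ needs an edge $bc$ with $b\in N(w_1)$ and $c\in N(v)$, disjoint from $a$. To find it I would take $b\in N(w_1)\cap N(w_2)$ and $c=w_2$, provided $w_2\in N(v)$. If $w_2\notin N(v)$, I would use a common neighbour of $w_2$ and $v$ through Observation~\ref{1} together with the large common neighbourhood of $w_1$ and $w_2$.

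The main obstacle is the bookkeeping for distinctness in the degenerate configurations, where the relevant common neighbourhoods are tiny (of size $\le 2$ or $3$). If the path construction stalls there, the fallback is a degree count. Claim~\ref{2.10} bounds the degrees in $D_{uv}$ by $n$, Claim~\ref{2.15} bounds all but two vertices of $A_u\cup B_{uv}\cup A_v$ by $n$, and degrees above $\beta$ are distinct. Together with the small forced common neighbourhood, which by Observation~\ref{1} makes some $D$-set tiny and hence some neighbourhood large, this should contradict $e(G)\ge n^2+n$ for $n\ge 11$.
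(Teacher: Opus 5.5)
\textbf{Gap in Case 2.} Your Case 1, and the branch of Case 2 with $w_2\in N(v)$, both go through. The branch of Case 2 with $w_2\notin N(v)$ does not. There $w_2\in A_u$, $d(w_2)\le n$, and you propose to use a common neighbour of $w_2$ and $v$ supplied by Observation~\ref{1}. That observation only gives
\[
|N(w_2)\cap N(v)|\ge d(w_2)+c+1-n,
\]
which is $\le 0$ as soon as $d(w_2)\le n-c-1$. In this case you only know $d(w_2)\ge 2n+5-d(w_1)\ge 5$, so no such common neighbour need exist. The ``fallback degree count'' is not specified, and you give no reason why a purely local statement like this would follow from $e(G)\ge n^2+n$. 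As written, this branch is unproved.

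\textbf{Missing idea.} The problem is the symmetry choice. You fixed the high-degree vertex $w_1$ on the $u$-side, which forces you to route from the possibly low-degree $w_2$ to $v$. Instead, after choosing $w_1$ with $d(w_1)\ge n+3$, assume without loss of generality that $w_2\in N(u)$, and let $w_1$ reach $v$. Since $d(w_1)+d(v)\ge 2n+4$, Observation~\ref{1} gives $|N(w_1)\cap N(v)|\ge 3$ whether or not $w_1v\in E(G)$. So you can pick
\[
w\in (N(w_1)\cap N(v))\setminus\{u,w_2\}.
\]
Since $|N(w_1)\cap N(w_2)|\ge 4$, you can also pick
\[
u_1\in (N(w_1)\cap N(w_2))\setminus\{u,v,w\}.
\]
Then $u\,w_2\,u_1\,w_1\,w\,v$ is a path of length five joining $u$ and $v$, which have equal degree. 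This is exactly the paper's proof.

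It handles every case at once: there is no split on $d(w_2)$, and no appeal to Lemma~\ref{dudv}, Claim~\ref{2.11} or Claim~\ref{2.15}. Your preliminary discussion of where $w_1$ can lie is also unnecessary. Within your own framework, the failing branch is repaired by this same path, since $w_2\in N(u)$ there.
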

\begin{proof}
    Suppose there exist two vertices $w_1, w_2 \in A_u \cup B_{uv} \cup A_v$ such that $d(w_1) + d(w_2) \ge 2n + 5$. Since $d(w_1)$ and $d(w_2)$ are integers, at least one of $d(w_1)$ and $d(w_2)$ is at least $n + 3$. Without loss of generality, assume $d(w_1) \ge n + 3$. 

Since $w_2 \in A_u \cup B_{uv} \cup A_v$, without loss of generality, we may assume that $w_2 \in N(u)$.
As $d(w_1) \ge n + 3$ and $d(v) \ge n + 1$, by Observation~\ref{1}, we have $|N(w_1) \cap N(v)| \ge 3$.
Thus, there exists a vertex $w$ distinct from $w_2$ and $u$ such that $w \in N(w_1) \cap N(v)$. 
Since $d(w_1) + d(w_2) \ge 2n + 5$, by Observation~\ref{1}, we have $|N(w_1) \cap N(w_2)| \ge 4$.
Therefore, there exists a vertex $u_1$ distinct from $u$, $v$, and $w$ such that $u_1 \in N(w_1) \cap N(w_2)$.
Then $uw_2u_1w_1wv$ is a path of length five whose endpoints $u$ and $v$ have the same degree, a contradiction.
\end{proof}

By Claim \ref{2n+4}, the sum of the degrees of the two vertices with maximum degree in $A_u \cup B_{uv} \cup A_v$ is at most $2n + 4$. Claim \ref{2.15} implies that there are at most two vertices in $A_u \cup B_{uv} \cup A_v$ having degree at least $n + 1$. Consequently, all other vertices in this set have degree at most $n$. Hence,
\begin{align}\label{2n+4.}
    \sum_{z\in A_u \cup B_{uv} \cup A_v} d(z) \le (2n - 1 - |D_{uv}| - 2)n + (2n + 4).
\end{align}
By Lemma~\ref{dudv}, we also have $d(w) \le n - c + 3+\textbf{1}_{uv}$ for any $w \in B_{uv}$.
If $c\ge3+\textbf{1}_{uv}$, then 
\begin{align}\label{2n+4..}
\sum_{z\in A_u \cup B_{uv} \cup A_v} d(z)
&\le |B_{uv}|(n - c + 3+\textbf{1}_{uv}) + (2n - 1 - |D_{uv}| - 2 - |B_{uv}|)n + (2n + 4)\notag\\
&=(2n - 1 - |D_{uv}| - 2)n + (2n + 4)-|B_{uv}|(c-3-\textbf{1}_{uv}).
\end{align}
By Claim \ref{2.10}, every vertex in $D_{uv}$ has degree at most $n$ if $G$ is not isomorphic to $K_{n,n+1}$. Thus
\begin{align} \label{n}
    \sum_{z\in D_{uv}} d(z) \le |D_{uv}| \, n.
\end{align}
Combining \eqref{2n+4.}, \eqref{2n+4..} and \eqref{n}, if $G$ is not isomorphic to $K_{n,n+1}$, then we obtain
\begin{align*}
\sum_{z\in V(G)} d(z)
&= \sum_{z\in A_u \cup B_{uv} \cup A_v} d(z) + \sum_{z\in D_{uv}} d(z) + d(u) + d(v) \\
&\le (2n - 1 - |D_{uv}| - 2)n + 2n + 4 + |D_{uv}|n + 2(n + c)-|B_{uv}|\cdot\max\{0,c-3-\textbf{1}_{uv}\} \\
%&= (x - 1 - 2c + 2\cdot\mathbf{1}_{uv})n + \bigl(2n - 1 - (x - 1 - 2c + 2\cdot\mathbf{1}_{uv}) - 2\bigr)n + 2n + 4 + 2n + 2c \\
&= 2n^2 + n + 2c + 4-x\cdot\max\{0,c-3-\textbf{1}_{uv}\} .
\end{align*}
This together with the fact $x\ge2c-1$ (by Observation~\ref{1}) and $e(G)  \ge n^2 + n$ yields that
\[
n\le 2c + 4 - (2c-1)\cdot\max\{0,c-3-\textbf{1}_{uv}\}.
\]
This leads to a contradiction for $n\ge11$ unless $c=4$ and $\mathbf{1}_{uv}=1$. In the last situation, we conclude that $|B_{uv}|\ge 2c-1\ge7$ and $d(w) \le n $ for any $w \in B_{uv}$. It follows that there must exist two distinct vertices $u_1,u_2$ in $B_{uv}$ such that $d(u_1)=d(u_2)=n$. Otherwise, we have $\sum_{z\in A_u \cup B_{uv} \cup A_v} d(z)
\le(2n - 1 - |D_{uv}| - 2)n + (2n + 4)-2,$ which together with  \eqref{n} implies that $\sum_{z\in V(G)} d(z)\le2n^2+n+10$. This leads to a contradiction for $n\ge11$.
Recall that $x\le\beta-\mathbf{1}_{uv}=n+3$ and
$
|D_{uv}|=x-1-2c+2\cdot\mathbf{1}_{uv}.
$
We have $|D_{uv}|\le n-4$. This together with $d(u_1)=n$ yields that
$
|N(u_1)\cap(A_u\cup B_{uv}\cup A_v)|\ge2.
$
Thus, there exists a vertex $u_3\in N(u_1)\cap(A_u\cup B_{uv}\cup A_v)$ different from $u_2$.
Without loss of generality, we may assume that $u_3\in N(u)$.
Since $|B_{uv}|\ge7$, there also exists a vertex $u_4\in B_{uv}$ distinct from $u_1,u_2,u_3$.
Clearly, $u_1u_3uu_4vu_2$ is a path of length five with equal-degree endpoints, a contradiction.
%By Observation~\ref{1}, we have $x\ge2c-1$.
%By our assumption that $n \ge 17$, and since $c$ is an integer, it follows that $c \ge \frac{n-4}{2} \ge 7$.
%Since $c \ge 7$, by Observation~\ref{1}, we obtain $x = |B_{uv}| \ge 13$.
%Furthermore, by Lemma~\ref{dudv}, for any $w \in B_{uv}$, we have $d(w) \le 2n - (n + c) + 4 = n - c + 4$.
%Thus,
%\begin{align}\label{2n+4..}
%\sum_{z\in A_u \cup B_{uv} \cup A_v} d(z)
%\le |B_{uv}|(n - c + 4) + (2n - 1 - |D_{uv}| - 2 - |B_{uv}|)n + (2n + 4).
%\end{align}
%From \eqref{n} and \eqref{2n+4..}, we obtain
%\begin{align*}
%\sum_{z\in V(G)} d(z)
%&= \sum_{z\in A_u \cup B_{uv} \cup A_v} d(z) + \sum_{z\in D_{uv}} d(z) + d(u) + d(v)\\
%&\le (2n - 1 - |D_{uv}| - |B_{uv}| - 2)n + |B_{uv}|(n - c + 4) + 2n + 4 + |D_{uv}|n + d(u) + d(v) \\
%&= -cx + 4x + 2n^2 + n + 2c + 4.
%\end{align*}
%Since $2e(G) \ge 2(n^2 + n)$, we derive that
%\[4x + (2 - x)c - n + 4 \ge 0.\]
%Since $c \ge \frac{n - 4}{2}$ and $x \ge 13$, we have
%\[0 \le 4x + (2 - x)c - n + 4 \le 4x + (2 - x)\frac{n - 4}{2} - n + 4 = 4x - \frac{(n - 4)x}{2}.\]
%This contradicts our assumption that $n \ge 17$.
Hence, we complete the proof of Lemma~\ref{beta 1/2}.
\end{proof}

\subsection{Proof of Lemma \ref{beta n+2}} 
In this subsection, we prove Lemma \ref{beta n+2}. We first give the following easy bounds for $\beta$.
\begin{lem}\label{beta=4}
We have $5\leq \beta\leq \Delta$.
\end{lem}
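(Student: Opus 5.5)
The upper bound $\beta\le\Delta$ is immediate from the definitions: $\beta$ is the degree of some vertex, hence at most the maximum degree $\Delta$. (Note $\beta$ is well defined, since any graph on at least two vertices has two vertices of equal degree.) So the content is the lower bound $\beta\ge 5$. I will prove it by a counting argument using only the edge bound $e(G)\ge n^2+n$ and the fact that the degrees are bounded by $2n$.

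Suppose for contradiction that $\beta\le 4$. Then every value $k\ge 5$ occurs as a degree at most once. Every degree lies in $\{0,1,\dots,2n\}$, and the $2n+1$ vertices are split into those with degree at most $4$ and those with distinct degrees in $\{5,\dots,2n\}$. Let $m$ be the number of vertices of degree at least $5$. Then
\[
\sum_{z\in V(G)} d(z)\le \sum_{k=2n-m+1}^{2n} k + 4(2n+1-m).
\]
The right-hand side is maximized by taking $m$ as large as possible, since each high-degree vertex contributes at least $5>4$. The largest possible value is $m=2n-4$, using the distinct values $5,\dots,2n$, which leaves $5$ vertices of degree at most $4$. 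This gives
\[
2e(G)\le \sum_{k=5}^{2n}k+20=n(2n+1)-10+20=2n^2+n+10.
\]
This is less than $2n^2+2n$ whenever $n>10$, so for $n\ge 11$ it contradicts $e(G)\ge n^2+n$. (Equivalently, the degree-sum bound would force $n\le 10$.)

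I expect no real obstacle. The only care needed is the optimization step, namely noting that it is best to use every distinct value from $5$ to $2n$ rather than smaller repeated values. One could sharpen the count further, for instance by observing that a vertex of degree $2n$ rules out a vertex of degree $0$, but this is not needed since $n\ge 11$ already suffices. If a stronger lower bound on $\beta$ were wanted for later use, the same counting template with threshold $t$ in place of $5$ gives $2e(G)\le 2n^2+n+\binom{t}{2}$ roughly, so $\beta\ge t$ whenever $\binom{t}{2}<n$. For the stated claim, $t=5$ works for $n\ge 11$.
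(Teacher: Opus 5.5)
Your proposal is correct and follows essentially the same route as the paper. The paper assumes $\beta\le 4$, bounds the degree sum by $4\cdot 5+\sum_{k=5}^{2n}k=2n^2+n+10<2n^2+2n$ for $n\ge 11$, and notes that $\beta\le\Delta$ is immediate. Your added justification of the optimization step in $m$ just makes explicit what the paper takes for granted.
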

\begin{proof}[\bf{Proof}]
The upper bound $\beta\le\Delta$ is immediate from the definition of $\beta$. For the lower bound, we may assume that $\beta\le4$. Then every degree greater than $4$ appears at most once. It follows that
\[
2e(G)=\sum_{v\in V(G)} d(v)\le 4\cdot5+\sum_{k=5}^{2n}k=2n^2+n+10<2n^2+2n,
\]
where the last inequality holds for $n \ge 11$. This leads to a contradiction as $e(G)\ge n^2 + n$.
\end{proof}

\begin{proof}[\bf{Proof of Lemma \ref{beta n+2}}]
Suppose for a contradiction that $\beta \le \Delta-4$ and $\Delta \ge n+4$.
Clearly, there is a unique vertex $v_0$ of degree $\Delta$. Partition $N(v_0)$ into two parts $A$ and $B$ with
\[
A := \{v\in N(v_0) \mid d(v) \ge 2n-\Delta+5\} \quad \text{and} \quad
B := \{v\in N(v_0) \mid d(v) \le 2n-\Delta+4\}.
\]
If there exist two vertices of $A$ with equal degree, then by Lemma~\ref{dudv} we  have
\[
\Delta = d(v_0) \le 2n - (2n-\Delta+5) + 4 = \Delta-1,
\]
a contradiction. Hence all vertices of $A$ have distinct degrees. The possible degree for a vertex of $A$ lies in the interval $[2n-\Delta+5,\;\Delta-1]$, which contains exactly $2\Delta-2n-5$ integers. Consequently,
$|A| \le 2\Delta-2n-5.$
It follows that
\begin{align}\label{|B|ge}
|B|=\Delta - |A| \ge 2n-\Delta+5.
\end{align}

Let $\overline{N}(v_0) = V(G)\setminus(\{v_0\}\cup N(v_0))$, and note that $|\overline{N}(v_0)| = 2n-\Delta$. We aim to bound $\sum_{v\in V(G)} d(v)$ from above. The sum is split as
\begin{align}\label{sumv}
\sum_{v\in V(G)} d(v) = \Delta + \sum_{v\in A\cup\overline{N}(v_0)} d(v) + \sum_{v\in B} d(v).
\end{align}
By the definition of $B$, we have $\sum_{v\in B} d(v) \le |B|(2n-\Delta+4)$. It suffices to bound $\sum_{v\in A\cup\overline{N}(v_0)} d(v)$, and we maximize it under the following constraints:
\begin{itemize}
    \item[(a)] all the vertices of $A$ have pairwise distinct degrees;
    \item[(b)] no two vertices in $A\cup\overline{N}(v_0)$ share a degree greater than $\beta$. % (they may share a degree equal to $\beta$).
\end{itemize}
Let $\lambda(2n+1,\Delta,\beta,|B|)$ denote the maximum possible sum of degrees for the $2n-|B|$ vertices in $A\cup\overline{N}(v_0)$. The degrees are chosen from $\{1,2,\dots,\Delta-1\}$.
By Lemma \ref{lambda}, we obtain
\begin{align}\label{sumA}
\sum_{v\in A\cup\overline{N}(v_0)} d(v)\le\lambda(2n+1, \Delta, \beta, |B|)
= \sum_{k=|B|}^{\Delta-1}k
+ \big(2n-\Delta\big)\beta
+ \binom{|B|-\beta}{2}
- \binom{(|B|-\beta)-(2n-\Delta)}{2}.
\end{align}

In what follows, we proceed our proof by distinguishing three cases according to the value of $|B|-\beta$.

\bigskip
{\bf Case 1}. $|B|-\beta \le 0$.
\medskip

By \eqref{sumv} and \eqref{sumA}, we obtain
\[
\sum_{v\in V(G)} d(v) \le \Delta + \sum_{k=|B|}^{\Delta-1} k + (2n-\Delta)\beta + |B|(2n-\Delta+4).
\]
The right‑hand side can be  simplified to a quadratic function $f_1(|B|)$ in the variable $|B|$ as follows:
\[
f_1(|B|) = -\frac12|B|^2 + \Bigl(2n-\Delta+\frac92\Bigr)|B| + (2n-\Delta)\beta + \frac12\Delta^2 + \frac12\Delta.
\]
Recall that $|B| \ge 2n-\Delta+5$ by \eqref{|B|ge}. Since the coefficient of $|B|^2$ is negative and the axis of symmetry of the parabola is given by $|B| = 2n-\Delta+\frac92$, the function decreases for $|B| \ge 2n-\Delta+5$. Thus
\[
\sum_{v\in V(G)} d(v) \le f_1(2n-\Delta+5) = 2n^2+9n-2n\Delta+\Delta^2-4\Delta + (2n-\Delta)\beta +10.
\]
This together with $2e(G) = \sum_{v\in V(G)} d(v) \ge 2n^2+2n$ implies that
\begin{align}\label{CASE1}
    (2n-\Delta)\beta - 2n\Delta +7n + \Delta^2 -4\Delta +10 \ge 0.
\end{align}
Note that $\Delta\le2n$. If $\Delta =2n$, then \eqref{CASE1} gives $n\le 10$, a contradiction. Hence, we have $2n-\Delta\ge1$. It follows from \eqref{CASE1} that
\[
\beta \ge\frac{2n\Delta-\Delta^2-7n+4\Delta-10}{2n-\Delta}= \Delta-4 + \frac{n-10}{2n-\Delta},
\]
which contradicts $\beta \le \Delta-4$ for $n\ge 11$.

\bigskip
{\bf Case 2}. $|B|-\beta \ge 2n-\Delta+1$.
\medskip

By \eqref{sumv} and \eqref{sumA}, we obtain
\begin{align*}
\sum_{v\in V(G)} d(v) 
&\le \Delta +\sum_{k=|B|}^{\Delta-1}k+\big(2n-\Delta\big)\beta
+ \binom{|B|-\beta}{2}
- \binom{(|B|-\beta)-(2n-\Delta)}{2} + |B|(2n-\Delta+4)\\
&=\Delta +\sum_{k =|B|-(2n-\Delta)}^{\Delta-1} k + |B|(2n-\Delta+4).
\end{align*}
This rightmost expression can be simplified to a quadratic function $f_2(|B|)$ in $|B|$ as follows:
\[
f_2(|B|) = -\frac12|B|^2 + \Bigl(4n-2\Delta+\frac92\Bigr)|B| -2n^2+2n\Delta -n +\Delta.
\]
Clearly, the maximum of $f_2(|B|)$ occurs at $|B| = 4n-2\Delta+\frac92$. Since $|B|$ is an integer, we have
\begin{align}\label{case21}
    \sum_{v\in V(G)} d(v) \le f_2(4n-2\Delta+4) = 2\Delta^2 - (6n+8)\Delta + 6n^2+17n+10.
\end{align}
Using $2e(G) \ge 2n^2+2n$ gives that
\begin{align*}\label{case22}
    2\Delta^2 - (6n+8)\Delta + 4n^2+15n+10 \ge 0.
\end{align*}
Solving the quadratic inequality in $\Delta$ yields
\[
\Delta \le \frac{3n+4 - \sqrt{n^2-6n-4}}{2} \quad\text{or}\quad \Delta \ge \frac{3n+4 + \sqrt{n^2-6n-4}}{2}.
\]
The first upper bound is less than $n+4$ for $n\ge 11$, contradicting $\Delta \ge n+4$; the second lower bound exceeds $2n$ for $n\ge 11$, which is impossible.

\bigskip
{\bf Case 3}. $1\le |B|-\beta \le 2n-\Delta$.
\medskip

By \eqref{sumv} and \eqref{sumA}, we obtain
\begin{align}\label{case3}
\sum_{v\in V(G)} d(v) &\le \Delta +\sum_{k=|B|}^{\Delta-1}k+\big(2n-\Delta\big)\beta
+ \binom{|B|-\beta}{2}
 + |B|(2n-\Delta+4)\notag
 \\&=
 \Delta + \sum_{k=\beta+1}^{\Delta-1} k + (2n-\Delta+\beta+1)\beta + |B|\bigl(2n-\Delta+4-\beta\bigr).
\end{align}
In what follows, we consider two subcases depending on the sign of $2n-\Delta+4-\beta$.

\bigskip
{\bf Subcase 3a}. $2n-\Delta+4-\beta \ge 0$.
\medskip

Note that the coefficient of $|B|$ in \eqref{case3} is non‑negative. Using $|B| \le \Delta$, we obtain
\[
\sum_{v\in V(G)} d(v) \le \Delta + \sum_{k=\beta+1}^{\Delta-1} k + (2n-\Delta+\beta+1)\beta + \Delta(2n-\Delta+4-\beta).
\]
Simplifying the rightmost expression gives a quadratic function $f_3(\beta)$ in $\beta$ as follows:
\[
f_3(\beta) = \frac12\beta^2 + \Bigl(2n-2\Delta+\frac12\Bigr)\beta + 2n\Delta -\frac12\Delta^2 +\frac92\Delta.
\]
By Lemma~\ref{beta=4}, $\beta \ge 5$, and the subcase condition implies $\beta \le 2n-\Delta+4$. Thus $\beta$ lies in $[5,\,2n-\Delta+4]$. Since $f_3$ is convex, its maximum on this interval occurs at an endpoint. Evaluating $f_3(5)$ gives
\[
f_3(5) = -\frac12\Delta^2 + \Bigl(2n-\frac{11}2\Bigr)\Delta + 10n+15.
\]
Using $2e(G) \ge 2n^2+2n$ gives
\begin{align*}        
-\frac{1}{2}\Delta^2 + (2n-\frac{11}{2}) \Delta-2n^2+8n+15\ge 0.
\end{align*}
View the expression above as a quadratic polynomial in the variable \(\Delta\).
Then its discriminant
$
\left(2n - \frac{11}{2}\right)^2 - 2(2n^2 - 8n - 15) = -6n + \frac{241}{4}
$
must be non-negative; however, this leads to a contradiction for $n\ge 11$. Hence, the maximum is attained at the other endpoint and
\[
f_3(\beta)\le f_3(2n-\Delta+4) = 2\Delta^2 - (6n+8)\Delta + 6n^2+17n+10,
\]
which is exactly the expression obtained in \eqref{case21}. Arguments similar to those in Case~2 yield a contradiction.

\bigskip
{\bf Subcase 3b}. \(2n-\Delta+4-\beta < 0\).
\medskip

Note that the coefficient of \(|B|\) in \eqref{case3} is negative.  Since \(|B|\ge \beta+1\), we have 
\[
\sum_{v\in V(G)} d(v) \le \Delta + \sum_{k=\beta+1}^{\Delta-1} k + (2n-\Delta+\beta+1)\beta + (\beta+1)(2n-\Delta+4-\beta).
\]
Simplifying the rightmost expression gives a quadratic function $f_4(\beta)$ in $\beta$ as follows:
%Simplifying gives another quadratic in \(\beta\):
\begin{align*}\label{CASE3B}
    f_4(\beta) = -\frac12\beta^2 + \Bigl(4n-2\Delta+\frac72\Bigr)\beta + \frac12\Delta^2 -\frac12\Delta + 2n +4.
\end{align*}
Since \(\beta\) is an integer, the maximum value of $f_4(\beta)$ is achieved at \(\beta = 4n-2\Delta+3\). Then 
\[
f_4(\beta)\le f_4(4n-2\Delta+3) = \frac52\Delta^2 - \Bigl(8n+\frac{15}{2}\Bigr)\Delta + 8n^2+16n+10.
\]
Using \(2e(G)\ge2n^2+2n\), we obtain
\[
\frac52\Delta^2 - \Bigl(8n+\frac{15}{2}\Bigr)\Delta + 6n^2+14n+10 \ge 0.
\]
Solving this quadratic inequality in \(\Delta\) yields either
\[
\Delta \ge \frac{16n+15 + \sqrt{16n^2-80n-175}}{10}
\quad\text{or}\quad
\Delta \le \frac{16n+15 - \sqrt{16n^2-80n-175}}{10}.
\]
The first bound implies that $\Delta>2n$ for $n\ge11$, which is impossible. The second one shows that $\Delta<6n/5+3$ for  $n\ge 11$, implying that $\Delta-4 < 4n-2\Delta+3$ for $n\ge 11$. This together with $\beta \le \Delta-4$ deduces that
\[
f_4(\beta)\le f_4(\Delta-4) = -2\Delta^2 + (4n+15)\Delta -14n -18.
\]
Using $2e(G) \ge 2n^2+2n$ gives
\begin{align*}        
-2\Delta^2 + (4n+15)\Delta -2n^2 -16n -18 \ge 0.
\end{align*}
View the expression above as a quadratic polynomial in the variable \(\Delta\).
Then its discriminant
\((4n+15)^2 - 8(2n^2+16n+18) = -8n +81\)
must be non-negative; however, this leads to a contradiction for $n\ge 11$.

Combining the above three cases, we conclude that our initial assumption is false. Hence, either \(\beta \ge \Delta-3\) or \(\Delta \le n+3\), completing the proof of Lemma \ref{beta n+2}.
\end{proof}

\section{Concluding remarks}
In this paper, we show that for all $n \ge 11$, $K_{n,n+1}$ is the unique $(2n+1)$-vertex graph with at least $n^2+n$ edges that avoids two equal-degree vertices joined by a path of length five. Our method can also be extended to graphs with an even number of vertices, yielding the following theorem. We leave the proof of this theorem in the appendix to avoid redundancy, as it is essentially identical to that of Theorem~\ref{LZ}.
\begin{thm}\label{even}
Let $n\ge 13$ be an integer. The unique \(2n\)-vertex graph with at least \(n^{2}-1\) edges,
which does not contain two vertices of the same degree joined by a path of length five, is the
complete bipartite graph \(K_{n-1,n + 1}\).
\end{thm}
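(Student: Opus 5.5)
We will mirror the proof of Theorem~\ref{LZ} on $2n$ vertices. Let $G$ be a $2n$-vertex graph with at least $n^2-1$ edges, containing no two equal-degree vertices joined by a path of length five. Let $\beta$ be the largest degree attained by two vertices and $\Delta$ the maximum degree. The target extremal graph is $K_{n-1,n+1}$. Its degree pattern is $n-1$ vertices of degree $n+1$ and $n+1$ vertices of degree $n-1$. So the natural threshold is now $\beta=n+1$. The proof splits into two lemmas, the analogues of Lemmas~\ref{beta 1/2} and~\ref{beta n+2}:
\begin{itemize}
\item[(I)] $\beta\le n+1$, with equality only if $G\cong K_{n-1,n+1}$;
\item[(II)] either $\beta\ge \Delta-3$ or $\Delta\le n+3$.
\end{itemize}

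Granting (I) and (II), suppose $G\not\cong K_{n-1,n+1}$. Then $\beta\le n$, and by (II) we get $\Delta\le n+3$. Every degree above $\beta$ occurs at most once, so
\[
2e(G)\le (2n-3)n+(n+1)+(n+2)+(n+3)=2n^2+6.
\]
Since $2e(G)\ge 2n^2-2$, this bound alone is not contradictory, unlike in the odd case. We therefore sharpen it. Equality-type configurations force three vertices of degrees $n+1,n+2,n+3$ and all remaining vertices of degree exactly $n$. Because the slack is only $8$, at least $2n-11$ vertices have degree exactly $n$. Lemma~\ref{dudv} caps every common neighbour $w$ of two such vertices at $d(w)\le n+3$, which gives no contradiction by itself. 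Instead we pick two degree-$n$ vertices $u,v$ and apply Observation~\ref{1}, which gives $|B_{uv}|\ge 2-2\cdot\mathbf{1}_{uv}+|D_{uv}|$. We then run the Case~1 pigeonhole argument of Lemma~\ref{dudv} together with the many degree-$n$ vertices to build a path of length five between two of them. If that argument turns out too delicate, the fallback is to strengthen (II) to $\Delta\le n+2$.

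For (I), we copy the proof of Lemma~\ref{beta 1/2} with $\beta=n+1+c$, $c\ge1$, and fix $u,v$ with $d(u)=d(v)=\beta$. Now $|V(G)|=2n$, so Observation~\ref{1} gives $|D_{uv}|=x-2-2c+2\cdot\mathbf{1}_{uv}$, where $x=|B_{uv}|$. Claims~\ref{D2N+1}--\ref{2n+4} go through with thresholds shifted by one:
\begin{itemize}
\item vertices of $D_{uv}$ have degree at most $n+1$;
\item at most two vertices of $A_u\cup B_{uv}\cup A_v$ have degree at least $n+2$;
\item pairwise degree sums in $A_u\cup B_{uv}\cup A_v$ are at most $2n+4$.
\end{itemize}
Lemma~\ref{dudv} bounds the degrees in $B_{uv}$ by $n-1-c+2+\mathbf{1}_{uv}$. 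The final degree count against $2n^2-2$ then forces $n$ to be bounded, which is where $n\ge13$ is used. The boundary case $\beta=n+1$ must be shown to give exactly $K_{n-1,n+1}$, as in Case~1 of Claim~\ref{2.10}: $u,v$ nonadjacent, $A_u=A_v=\emptyset$, and independence of the two sides.

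For (II), we repeat the proof of Lemma~\ref{beta n+2} verbatim with $\nu=2n$. Now $|\overline N(v_0)|=2n-1-\Delta$, and the set $A$ is defined by $d\ge 2n-\Delta+4$. Lemma~\ref{lambda} then gives four quadratic case bounds, each contradicting $e(G)\ge n^2-1$ for $n\ge13$.

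\textbf{Main obstacle.} The main obstacle is (I). Its many path constructions each need enough spare vertices, for example $\ge\frac{n-3}{2}$ common neighbours exceeding a small constant. Its equality analysis must also land exactly on the asymmetric $K_{n-1,n+1}$ rather than on a balanced graph. I would first redo Claim~\ref{2.10}'s equality case carefully, since that case identifies the extremal graph.
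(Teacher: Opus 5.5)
Your architecture matches the paper's. You use the two lemmas (I) and (II), with (II) proved by the $\lambda$-counting of Lemma~\ref{beta n+2} using $A=\{v\in N(v_0): d(v)\ge 2n-\Delta+4\}$ and $|\overline N(v_0)|=2n-1-\Delta$. You then count to get at least $2n-11$ vertices of degree $n$. The \textbf{first gap} is that the final step is left open, and neither route you suggest can close it:
\begin{itemize}
\item The pigeonhole in Lemma~\ref{dudv} needs a common neighbour of degree at least $n+3+\mathbf{1}_{uv}$, which $\Delta\le n+3$ essentially excludes.
\item Proving $\Delta\le n+2$ does not help, since even $\Delta\le n+1$ only gives $2e(G)\le 2n^2+1$.
\end{itemize}
The missing idea is a direct path construction. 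If $\Delta\le n+1$, at most four vertices have degree $\neq n$, and one strings a path $u_4u_2u_1u_3u_5u_6$ through degree-$n$ vertices. So some $w$ has $d(w)\ge n+2$, and hence at least $n-8$ neighbours of degree $n$. Pick such a neighbour $u_1$, some $u_2\in N(u_1)\cap N(w)$ (by Observation~\ref{1}), and three more degree-$n$ neighbours $u_3,u_4,u_5$ of $w$. If $|N(u_3)\cap N(u_4)|\ge 4$, then $u_1u_2wu_3u_6u_4$ is a forbidden path for a suitable $u_6$. Otherwise Observation~\ref{1} gives $|D_{u_3u_4}|\le 3$, so $u_5$ shares at least four neighbours with $u_3$ or $u_4$, and the same shape of path ends at $u_5$.

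The \textbf{second gap} is in (I): your thresholds move in the wrong direction, and the concluding degree count then cannot contradict $2e(G)\ge 2n^2-2$. The hypothesis only guarantees average degree $n-1/n$. So almost every vertex of $V(G)\setminus\{u,v\}$ must be shown to have degree at most $n-1$; a per-vertex bound of $n$ is already too weak. Your bounds allow degree up to $n+1$ outside two exceptional vertices whose degrees sum to at most $2n+4$. With $c=1$ and $\mathbf{1}_{uv}=1$ they give only $2e(G)\le 2n^2+2n+4$, and even the odd-case bound $n$ leaves a total above $2n^2$ for small $c$.

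The paper needs claims that are new or strictly stronger in the even case:
\begin{itemize}
\item vertices of $D_{uv}$ have degree at most $n-1$ unless $G\cong K_{n-1,n+1}$, via a fresh case analysis on $d(w)=n+c_1$ with $c_1\in\{0,1\}$;
\item any two vertices of degree at least $n$ in $A_u\cup B_{uv}\cup A_v$ are adjacent;
\item pairwise degree sums in $A_u\cup B_{uv}\cup A_v$ are at most $2n+3$;
\item at most two vertices there have degree at least $n+1$, and at most two have degree exactly $n$.
\end{itemize}
With $\beta=n+c$ these give $2e(G)\le 2n^2-2n+2c+9$, forcing $c>7$. Lemma~\ref{dudv} then caps degrees in $B_{uv}$ at $n-c+3$, and the count closes.

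Separately, with your parametrization $\beta=n+1+c$, Observation~\ref{1} gives $|D_{uv}|=x-4-2c+2\cdot\mathbf{1}_{uv}$, not $x-2-2c+2\cdot\mathbf{1}_{uv}$. Also, the case $\beta=n+1$ needs the full contradiction argument, not just identification of the equality configuration.
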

%Since the proof is essentially identical to that of Theorem~\ref{LZ}, we leave it to the appendix to avoid redundancy.

Chen and Ma \cite{CM2025} also posed the following problem on paths of even length with equal-degree endpoints, and asserted that the behavior of \(p_\ell(N)\) differs significantly between odd and even $\ell$.
\begin{prob}[Chen and Ma \cite{CM2025}]\label{Prob-CM}
Determine the exact value of \(p_\ell(2n)\) for all even \(\ell\) and sufficiently large \(n\).
\end{prob}

In upcoming work, we show that \(p_4(2n)\le(1-\epsilon+o(1))n^2\) for some constant $\epsilon\in(0,1/2]$ and sufficiently large $n$; in particular, for $n\ge400$, among all $2n$-vertex graphs with at least $(n^2+n)/2$ edges, the half graph is the unique graph that contains two vertices of equal degree at least $n$ and admits no path of length four between any two vertices of equal degree.

%the unique \(2n\)-vertex graph with at least \(\frac{n(n+1)}{2}\) edges, which contains two vertices of equal degree at least \(n\) and does not contain two vertices of equal degree connected by a path of length four, is the half graph. \(p_4(2n)=(n^2+n)/2\) for sufficiently large $n$ if the equal-degree joined by the path of length four is at least \(n\), and the extremal graph is unique.

\section{Appendix: Proof of Theorem \ref{even}}
In this section, we complete the proof of Theorem~\ref{even}. Let \(n \ge 13\) be an integer and let \(G\) be a graph on \(2n\) vertices with at least \(n^2-1\) edges. Assume that \(G\) contains no two vertices of equal degree joined by a path of length five. Our goal is to show that \(G\) must be the complete bipartite graph \(K_{n-1,n+1}\).

We first establish the following two lemmas. Recall that \(\beta\) is the largest integer such that \(G\) contains two vertices of degree \(\beta\).
\begin{lem}\label{ebeta 1/2}
   We have \(\beta \leq n + 1\). Moreover, if \(\beta = n + 1\), then \(G\) is isomorphic to \(K_{n-1,n+1}\).
\end{lem}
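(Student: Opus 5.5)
We mimic the proof of Lemma \ref{beta 1/2}, replacing $|V(G)|=2n+1$ by $|V(G)|=2n$ and the edge bound $n^2+n$ by $n^2-1$. Suppose $\beta=n+c$ with $c\ge1$ and choose $u,v$ with $d(u)=d(v)=n+c$. By Observation \ref{1} with $|V(G)|=2n$, we get $|D_{uv}|=x-2c+2\cdot\mathbf{1}_{uv}$ with $x=|B_{uv}|$, $|A_u\cup B_{uv}\cup A_v|=2n-2-|D_{uv}|$, and $x\le n+c-\mathbf{1}_{uv}$. The total degree we must beat is $2n^2-2$.

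We then re-establish the chain of claims, checking each constant against the new vertex count. For Claim \ref{D2N+1}, the analogue is that for $w\in D_{uv}$ with $d(w)\ge n+1$ and $w_1\in A_u\cup B_{uv}\cup A_v$, either $d(w)+d(w_1)\le 2n-1$, or $d(w)+d(w_1)=2n$ and $ww_1\in E(G)$. The pair $(w,v)$ now gives $|N(w)\cap N(v)|\ge 4$, so the path argument goes through. For the analogues of Claims \ref{2.9} and \ref{2.10}, we use the same pairing and summation. Vertices of $D_{uv}$ have degree at most $n+1$. If some $w\in D_{uv}$ has degree exactly $n+1$, the neighbours $w_1$ have degree at most $n-1$, or equal to $n-1$ with adjacency, and the extremal configuration is forced: $|A_u|=|A_v|=0$, $B_{uv}$ independent of size $n+1$, and $D_{uv}\cup\{u,v\}$ of size $n-1$. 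This gives $K_{n-1,n+1}$, where the $n+1$ side has degree $n-1$ and the $n-1$ side has degree $n+1$.

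The analogues of Claims \ref{2.11}, \ref{2.15} and \ref{2n+4} are purely local path arguments. Their numerical inputs, such as $(n-3)/2\ge4$, need only $n\ge13$, with Observation \ref{1} shifted by one. In the even case the threshold should become $d(w_1)+d(w_2)\le 2n+3$, or we keep $2n+4$ with a weaker bound.

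In the final count we bound the degree sum by $2(n+c)$ plus $|D_{uv}|\cdot n$ for $D_{uv}$, plus at most $(2n-4-|D_{uv}|)n+(2n+4)$ for $A_u\cup B_{uv}\cup A_v$. We also subtract the Lemma \ref{dudv} saving $x\cdot\max\{0,c-2-\mathbf{1}_{uv}\}$ coming from $d(w)\le n-c+2+\mathbf{1}_{uv}$ on $B_{uv}$. This gives a total of roughly $2n^2+2c+4-x\max\{0,c-2-\mathbf{1}_{uv}\}$.

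\emph{Main obstacle.} Here the slack is much smaller: the target $2n^2-2$ sits below $2n^2$ rather than $n$ above it. So the simple inequality $n\le 2c+\dots$ no longer appears. Instead we must show the vertices of degree at least $n+1$ outside $\{u,v\}$ are few and small. I would first sharpen the bound on $A_u\cup B_{uv}\cup A_v$, using that these vertices, unless few, have degree at most $n-1$, since degree $n$ twice together with $B_{uv}$ creates a path $u_1u_3uu_4vu_2$ as at the end of the proof of Lemma \ref{beta 1/2}. With degree $n-1$ as the baseline, the loss of about $n$ recovers the margin. The remaining small values of $c$ then require the case analysis of equality, as in Claim \ref{2.9}, to force $K_{n-1,n+1}$.
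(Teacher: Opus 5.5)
There is a genuine gap, and it sits exactly where the even case departs from the odd one. Your setup and the transplanted local claims are fine. You correctly see that the transplanted count gives only about $2n^2+2c+4$, which does not beat $2n^2-2$. But what follows is a plan rather than an argument, and the mechanism you propose does not work. Two new ingredients are needed.

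\emph{First}, every vertex of $D_{uv}$ must have degree at most $n-1$, not $n$, unless $G\cong K_{n-1,n+1}$ (Claim~\ref{e2.10}). For this, the analogue of Claim~\ref{D2N+1} has to be run already for $d(w)\ge n$. The analysis of Claim~\ref{2.10} must then be redone for $d(w)=n+c_1$ with $c_1=0$ as well as $c_1=1$. You treat only $d(w)=n+1$ and then count $D_{uv}$ at degree $n$. Since $|D_{uv}|$ can be of order $n$, even with everything else in place this leaves the bound $2n^2-n+c+7+\mathbf{1}_{uv}$. That bound does not exclude, for instance, $n=13$, $c=3$, $\mathbf{1}_{uv}=1$, where Lemma~\ref{dudv} gives no saving.

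\emph{Second}, at most two vertices of $A_u\cup B_{uv}\cup A_v$ may have degree exactly $n$ (Claim~\ref{enn}). Your path $u_1u_3uu_4vu_2$ does not give this. It needs $u_2\in N(v)$ and a neighbour $u_3\in N(u)\setminus\{u_2\}$ of $u_1$ inside $A_u\cup B_{uv}\cup A_v$. The odd proof could guarantee these only in its special case $c=4$, $\mathbf{1}_{uv}=1$, where $u_1,u_2\in B_{uv}$ and $|D_{uv}|\le n-4$. Here $|D_{uv}|\le n-c-2+\mathbf{1}_{uv}$ can equal $n-2$, so a degree-$n$ vertex may have no neighbour in $A_u\cup B_{uv}\cup A_v$ at all. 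Also, two degree-$n$ vertices lying in $A_u$ are not covered by that path. The paper instead takes three such vertices and uses Observation~\ref{1} and pigeonhole to find two of them, $w_1,w_2$, with $|N(w_1)\cap N(w_2)|\ge 4$. It then forces in turn $w_2v\in E(G)$, $uv\notin E(G)$ and $w_1w_2\in E(G)$. It closes with the path $w_3uu_3vw_2w_1$ through $B_{uv}$.

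Your endgame is also pointed the wrong way. Combine both ingredients with the analogues of Claims~\ref{2.15} and~\ref{2n+4}, with top-two sum at most $2n+3$. The degree sum is then at most $2n^2-2n+2c+9$. So what survives is large $c$, namely $c\ge n-5\ge 8$, not small $c$. It is killed by Lemma~\ref{dudv}: then $|B_{uv}|\ge 2c\ge 16$, and every vertex of $B_{uv}$ has degree at most $n-c+3<n-1$. No equality analysis is needed at the end. The graph $K_{n-1,n+1}$ arises only inside the analogue of Claim~\ref{2.10}, in the subcase $\mathbf{1}_{uv}=0$, $c=c_1=1$.

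Finally, your formula $|D_{uv}|=x-2c+2\mathbf{1}_{uv}$ is off by two. With $|V(G)|=2n$ one gets $|D_{uv}|=x-2-2c+2\mathbf{1}_{uv}$. Your own description of $K_{n-1,n+1}$, with $x=n+1$ and $|D_{uv}|=n-3$, already contradicts the version you wrote, and every count above depends on it.
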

\begin{lem}\label{ebeta n+2}
We have either $\beta \geq \Delta-3$ or $\Delta\leq n+3$.
\end{lem}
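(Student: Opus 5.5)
The statement to prove is Lemma \ref{ebeta n+2}, the even-order analogue of Lemma \ref{beta n+2}. I will follow the proof of Lemma \ref{beta n+2} almost verbatim, now with $|V(G)|=2n$ and $e(G)\ge n^2-1$, so the degree sum is at least $2n^2-2$.

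\textbf{Lower bound on $\beta$.} First I record the analogue of Lemma \ref{beta=4}. If $\beta\le 4$, then every degree above $4$ occurs at most once, so
\[
\sum_v d(v)\le 4\cdot 5+\sum_{k=5}^{2n-1}k=2n^2-n+10,
\]
which is less than $2n^2-2$ for $n\ge 13$. Hence $5\le\beta\le\Delta$.

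\textbf{Setup.} Suppose for contradiction that $\beta\le\Delta-4$ and $\Delta\ge n+4$. Let $v_0$ be the unique vertex of degree $\Delta$. Lemma \ref{dudv} with $|V(G)|=2n$ says any two equal-degree neighbours $u,v$ of $v_0$ satisfy $\Delta\le 2n-d(u)+3$. So I split $N(v_0)$ into
\[
A=\{v\in N(v_0): d(v)\ge 2n-\Delta+4\},\qquad B=\{v\in N(v_0): d(v)\le 2n-\Delta+3\}.
\]
Vertices of $A$ have pairwise distinct degrees in $[2n-\Delta+4,\Delta-1]$, giving $|A|\le 2\Delta-2n-4$ and $|B|\ge 2n-\Delta+4$. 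The non-neighbourhood has size $|\overline N(v_0)|=2n-1-\Delta$. I then apply Lemma \ref{lambda} with $\nu=2n$ and $b=|B|$ to bound $\sum_{A\cup\overline N(v_0)}d(v)$. The total degree is bounded by $\Delta+\lambda(2n,\Delta,\beta,|B|)+|B|(2n-\Delta+3)$.

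\textbf{Case analysis.} The three cases on $|B|-\beta$ mirror the odd proof, with the shift $2n-\Delta\to 2n-1-\Delta$.
\begin{itemize}
\item Case 1, $|B|\le\beta$: the bound is a concave quadratic in $|B|$ with its vertex just below $2n-\Delta+4$, so I evaluate it at $|B|=2n-\Delta+4$. Comparing with $2n^2-2$ should give $\beta\ge\Delta-4+\frac{n-c}{2n-1-\Delta}$ for a small constant $c$, a contradiction. The boundary case $\Delta=2n-1$ has to be handled separately, as the case $\Delta=2n$ was in the odd proof.
\item Case 2: the bound becomes a quadratic in $|B|$ that I maximise over integers. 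This yields a quadratic inequality in $\Delta$ whose admissible roots lie below $n+4$ or above $2n-1$.
\item Case 3: I split on the sign of the coefficient $2n-\Delta+3-\beta$ of $|B|$. When it is nonnegative, I use $|B|\le\Delta$ and convexity in $\beta$, evaluating at the endpoints $\beta=5$ and $\beta=2n-\Delta+3$. When it is negative, I use $|B|\ge\beta+1$ and optimise the concave function in $\beta$, first unconstrained and then at $\beta=\Delta-4$, exactly as in Subcase 3b. Each step ends with a negative discriminant or an out-of-range root.
\end{itemize}

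\textbf{Main obstacle.} I expect the bottleneck to be confirming that the constants still close at $n\ge 13$. The edge threshold loses about $2n$ relative to the odd case, while the vertex count drops by one. The most delicate checks are Subcase 3a at $\beta=5$ and the final discriminant in Subcase 3b. I will recompute these polynomials explicitly and verify that their discriminants become negative from $n=13$ onward. If one of them fails, I will refine the estimate on $\sum_{B}d(v)$ using $\beta$, since degrees above $\beta$ in $B$ are also distinct.
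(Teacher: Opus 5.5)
Your proposal is correct and is essentially the paper's own proof: the same split of $N(v_0)$ into $A$ and $B$ at the threshold $2n-\Delta+4$ via Lemma~\ref{dudv}, the same application of Lemma~\ref{lambda} with $\nu=2n$, and the same three cases on $|B|-\beta$, including the same two subcases in Case~3. The constants close at $n\ge 13$ with no refinement needed: Case~1 gives $\beta\ge\Delta-4+\frac{n-12}{2n-1-\Delta}$, the boundary case $\Delta=2n-1$ forces $n\le 12$, and the discriminants in Subcases~3a and~3b are $-6n+\frac{265}{4}$ and $97-8n$; your value $2n^2-n+10$ in the bound for $\beta\ge 5$ is also the correct one.
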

We now give a proof of Theorem \ref{even}, postponing the proofs of Lemmas~\ref{ebeta 1/2} and \ref{ebeta n+2} to the next two subsections.
\begin{proof}[{\bf Proof of Theorem \ref{even}}]
Lemmas~\ref{ebeta 1/2} and \ref{ebeta n+2} imply that either (i) $\beta = n+1$ and $G$ is isomorphic to $K_{n-1,n+1}$, or (ii) $\beta \le n$ and $\Delta \le n+3$. In what follows, we may assume that $G$ is not isomorphic to $K_{n-1,n+1}$.
\begin{claim}\label{Vtx-Deg-n}
The following results hold:
\begin{itemize}
    \item[$(\mathrm{i})$] There are at least $2n-11$ vertices of degree $n$ in $G$.
    \item[$(\mathrm{ii})$] There exists a vertex $w$ such that $d(w)\ge n+2$.
\end{itemize}
\end{claim}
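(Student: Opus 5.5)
For both parts I would argue by degree counting, combining the condition $\beta\le n$ with the edge bound $\sum_{z\in V(G)}d(z)=2e(G)\ge 2n^2-2$. For (ii) I would add a short greedy path argument.

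\emph{Setup.} By the preceding reduction we are in case (ii): $\beta\le n$ and $\Delta\le n+3$. Since $\beta\le n$, each degree larger than $n$ occurs at most once. So at most three vertices have degree exceeding $n$, namely at most one each of degree $n+1$, $n+2$ and $n+3$. For each vertex set $\epsilon(z)=d(z)-n$. Define the excess $E=\sum_{d(z)>n}\epsilon(z)$ and the deficit $F=\sum_{d(z)<n}(-\epsilon(z))$. Then
\[
\sum_{z\in V(G)} d(z)=2n\cdot n+E-F\ge 2n^2-2,
\]
so $F\le E+2$.

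\emph{Part (i).} We have $E\le 1+2+3=6$, hence $F\le 8$. Every vertex of degree less than $n$ contributes at least $1$ to $F$, so at most $8$ vertices have degree below $n$. At most $3$ vertices have degree above $n$. Therefore at least $2n-11$ vertices have degree exactly $n$. This part is routine.

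\emph{Part (ii).} Suppose for contradiction that no vertex has degree at least $n+2$. Then $E\le 1$ and $F\le 3$. So at most three vertices have degree below $n$ and at most one has degree $n+1$. Let $S$ be the set of vertices of degree $n$; then $|S|\ge 2n-4$. Each $z\in S$ has at most four neighbours outside $S$, so $G[S]$ has minimum degree at least $n-4\ge 9$, using $n\ge 13$.

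In a graph of minimum degree $\delta$, a greedy walk that always moves to an unused neighbour produces a path with at least $\delta$ edges. Hence $G[S]$ contains a path $s_0s_1\cdots s_5$ of length five. Its endpoints both lie in $S$, so $d(s_0)=d(s_5)=n$. This contradicts the assumption that $G$ has no two equal-degree vertices joined by a path of length five, which proves (ii).

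The only delicate point is the bookkeeping in (ii). Once the vertices of degree $\ge n+2$ are excluded, one must confirm that so few vertices fall outside $S$ that $G[S]$ still has minimum degree at least $5$. This is where $n\ge 13$ is used, with room to spare. I do not expect a genuine obstacle.
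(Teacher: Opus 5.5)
Your proposal is correct and follows essentially the same route as the paper. Part (i) is the same degree count using $\beta\le n$, $\Delta\le n+3$ and $2e(G)\ge 2n^2-2$. For (ii), the paper also shows that at most four vertices have degree $\neq n$, then builds the path $u_4u_2u_1u_3u_5u_6$ among degree-$n$ vertices by explicit greedy neighbour choices; your appeal to the minimum-degree greedy path lemma in $G[S]$, where $\delta(G[S])\ge n-4$, is just a cleaner packaging of that same step.
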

\begin{proof}
(i) Suppose for a contradiction that there are at most $2n-12$ vertices of degree $n$ in $G$. Note that $\beta \le n$. Then
\[
\sum_{v\in V(G)} d(v) \le \sum_{k=n+1}^{n+3} k + (2n-12)n + \bigl(2n-3-(2n-12)\bigr)(n-1) = 2n^2 - 3,
\]
which contradicts $e(G)\ge n^2-1$.

(ii) Suppose that $\Delta\le n+1$. We first show that $G$ has at most $4$ vertices whose degree is not equal to $n$.
Suppose, to the contrary, that there exist $5$ vertices with degree different from $n$. Note that $\beta \le n$. Then
\[
\sum_{v\in V(G)} d(v) \le n(2n-5)+(n+1)+4(n-1)=2n^2-3,
\]
which contradicts $e(G)\ge n^2-1$. Let $u_1$ be a vertex with $d(u_1)=n$.
Since $n\ge13$ and at most $4$ vertices have degree not equal to $n$,
there exist distinct vertices $u_2,u_3\in N(u_1)$ with equal degree $n$.
Similarly, we can find vertices
$u_4\in N(u_2)$, $u_5\in N(u_3)$, $u_6\in N(u_5)$ with equal degree $n$,
all of which are different from $u_1,u_2,u_3$.
Then $u_4u_2u_1u_3u_5u_6$ is a path of length $5$ with equal-degree endpoints, a contradiction.
Thus there exists a vertex $w$ such that $d(w)\ge n+2$.
\end{proof}

Let $W = \{v\in V(G) \mid d(v)=n\}$. By Claim \ref{Vtx-Deg-n}, we have $|W|\ge2n-11$ and $d(w)\ge n+2$ for some $w$ in $G$.
It follows that $|N(w)\cap W|\ge n-8\ge5$ as $n\ge13$.
Choose $u_1\in N(w)\cap W$.
By Observation~\ref{1}, there exists a vertex $u_2\in N(u_1)\cap N(w)$.
Since $|N(w)\cap W|\ge5$, we may choose three distinct vertices
$u_3,u_4,u_5\in \bigl(N(u_1)\cap N(w)\bigr)\setminus\{u_1,u_2\}$.
If $|N(u_3)\cap N(u_4)|\ge4$, then we can choose a vertex
$u_6\in N(u_3)\cap N(u_4)$ different from $u_1,u_2,w$,
and $u_1u_2wu_3u_6u_4$ is a path of length $5$ with endpoints of equal degree, a contradiction.
Thus $|N(u_3)\cap N(u_4)|\le3$.
By Observation~\ref{1}, we have $|D_{u_3u_4}|\le3$.
Since $n\ge13$, we obtain
\(
|N(u_5)\setminus\bigl(D_{u_3u_4}\cup\{u_3,u_4\}\bigr)|\ge n-5\ge8.
\)
This implies that either $|N(u_5)\cap N(u_3)|\ge4$ or $|N(u_5)\cap N(u_4)|\ge4$.
Without loss of generality, we may assume that $|N(u_5)\cap N(u_3)|\ge4$.
Then there exists a vertex $u_7\in N(u_5)\cap N(u_3)$ different from $w,u_1,u_2$,
and $u_1u_2wu_3u_7u_5$ is a path of length $5$ with endpoints of equal degree, a contradiction. Thus, we complete the proof of Theorem~\ref{even}.
\end{proof}

\subsection{Proof of Lemma \ref{ebeta 1/2}}
In this subsection, we give a proof of Lemma \ref{ebeta 1/2}.
\begin{proof}[\bf{Proof of Lemma \ref{ebeta 1/2}}]  
Suppose for contradiction that \(n+c=\beta \geq n + 1\). Let \(u\) and \(v\) be two vertices in \(G\) with \(d(u) = d(v) = \beta\), and let \(\textbf{1}_{uv} = 1\) if \(uv \in E(G)\) and \(\textbf{1}_{uv} = 0\) if \(uv \notin E(G)\). Let \(|B_{uv}| = x\). Then, \(|A_u| =|A_v|=n+c-x-\textbf{1}_{uv}\) and \(|D_{uv}| = 2n-2-|B_{uv}| -|A_u|-|A_v|=x-2-2c+2\textbf{1}_{uv}\). Note that since $c \ge 1$, we have $|D_{uv}| = x - 2 - 2c + 2\cdot\mathbf{1}_{uv} < x = |B_{uv}|$, hence $|D_{uv}| < |B_{uv}|$.

\begin{claim}\label{eD2N+1}
   Let $|D_{uv}| \neq 0$ and suppose there exists a vertex $w \in D_{uv}$ with $d(w) \ge n$. Then for every vertex $w_1 \in A_u \cup A_v \cup B_{uv}$, either (i) $d(w)+d(w_1) \le 2n-1$, or (ii) $d(w)+d(w_1)=2n$ and $ww_1 \in E(G)$.
\end{claim}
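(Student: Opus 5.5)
The proof will copy Claim~\ref{D2N+1} with $|V(G)|=2n$ in place of $2n+1$. Suppose, to the contrary, that some $w_1\in A_u\cup A_v\cup B_{uv}$ has either $d(w)+d(w_1)\ge 2n+1$, or $d(w)+d(w_1)=2n$ and $ww_1\notin E(G)$. By the symmetry between $u$ and $v$ we may assume $w_1\in N(u)$. The goal is to build the path $u\,w_1\,w_2\,w\,w_3\,v$ of length five, which joins the equal-degree vertices $u$ and $v$ and so gives a contradiction.

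First, we find $w_2\in N(w)\cap N(w_1)$. Apply Observation~\ref{1} to the pair $(w,w_1)$ with $|V(G)|=2n$. This gives
\[
|N(w)\cap N(w_1)|\ge d(w)+d(w_1)-2n+2(1-\mathbf{1}_{ww_1}).
\]
If $d(w)+d(w_1)\ge 2n+1$, the right-hand side is at least $1$. If $d(w)+d(w_1)=2n$ and $ww_1\notin E(G)$, it equals $2$. In both cases $w_2$ exists. Note that $w_2\ne u,v$, because $w\in D_{uv}$ is adjacent to neither $u$ nor $v$. Also $w_2\neq w_1$ and $w_2\neq w$.

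Second, we find $w_3\in N(w)\cap N(v)$ with $w_3\notin\{w_1,w_2\}$. Apply Observation~\ref{1} to $(w,v)$. Since $wv\notin E(G)$, $d(w)\ge n$ and $d(v)=n+c\ge n+1$, we get
\[
|N(w)\cap N(v)|\ge n+(n+1)-2n+2=3.
\]
Hence we can pick such a $w_3$. It automatically avoids $u$ and $v$, since $w$ has no neighbour among them.

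The vertices $u,w_1,w_2,w,w_3,v$ are then pairwise distinct, and $u\,w_1\,w_2\,w\,w_3\,v$ is a path of length five whose endpoints have equal degree $\beta$, a contradiction. The only delicate point is the second step: we need three common neighbours of $w$ and $v$ so that $w_3$ can avoid both $w_1$ and $w_2$. This is exactly why the hypothesis $d(w)\ge n$ suffices here, whereas Claim~\ref{D2N+1} needed $d(w)\ge n+1$ for $2n+1$ vertices.
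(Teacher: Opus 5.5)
Your proof is correct and takes essentially the same route as the paper. Observation~\ref{1} applied to $(w,w_1)$ gives $w_2$; applied to $(w,v)$ with $|V(G)|=2n$ it gives three common neighbours, so $w_3$ can avoid $w_1,w_2$; and the path $u\,w_1\,w_2\,w\,w_3\,v$ is the contradiction. Your explicit distinctness check and your calculation for the boundary case $d(w)+d(w_1)=2n$ with $ww_1\notin E(G)$ are just a more careful write-up of the same argument.
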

\begin{proof}
   Assume, to the contrary, that there exists some $w_1 \in A_u \cup A_v \cup B_{uv}$ such that $d(w)+d(w_1) \ge 2n+1$, or $d(w)+d(w_1)=2n$ but $ww_1 \notin E(G)$. Without loss of generality, we may suppose $w_1 \in N(u)$. Applying Observation \ref{1} to the pair $(w,w_1)$ and using the above condition, we obtain $|N(w)\cap N(w_1)| \ge 1$; hence there exists a vertex $w_2 \in N(w)\cap N(w_1)$. Since $w\in D_{uv}$, we have $wu,wv\notin E(G)$. Now apply Observation \ref{1} to $(w,v)$; from $d(w)\ge n$ and $d(v)=\beta\ge n+1$ we get $|N(w)\cap N(v)| \ge 3$. Consequently we can choose a vertex $w_3\in N(w)\cap N(v)$ distinct from $w_1,w_2,u$. Then $u w_1 w_2 w w_3 v$ is a path of length five whose endpoints $u$ and $v$ have equal degree, contradicting the assumption on $G$.
\end{proof}
\begin{claim}\label{e2.9}
  If $|D_{uv}|\neq 0$, then for any vertex $w\in D_{uv}$, we have $d(w)\le n+1$.
\end{claim}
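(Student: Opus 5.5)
We will mirror the proof of Claim~\ref{2.9} in the odd case, with the even-case parameters and Claim~\ref{eD2N+1} in place of Claim~\ref{D2N+1}. Choose $w\in D_{uv}$ with $d(w)$ maximal and suppose for contradiction that $d(w)\ge n+2$.

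First we bound the degrees in $A_u\cup B_{uv}\cup A_v$. Since $d(w)\ge n+2\ge n$, Claim~\ref{eD2N+1} gives $d(w)+d(w_1)\le 2n$ for every $w_1\in A_u\cup B_{uv}\cup A_v$. Hence $d(w_1)\le n-2$ for all such $w_1$. Moreover, since $d(z)\le d(w)$, we also have $d(z)+d(w_1)\le 2n$ for every $z\in D_{uv}$ and every such $w_1$.

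Next we pair vertices and sum degrees. Because $|D_{uv}|<|B_{uv}|$, we can match each $z\in D_{uv}$ with a distinct $w_1\in A_u\cup B_{uv}\cup A_v$. Each matched pair contributes at most $2n$. The $2n-2-2|D_{uv}|$ unmatched vertices of $A_u\cup B_{uv}\cup A_v$ contribute at most $n-2$ each. The vertices $u,v$ contribute $2(n+c)$. Substituting $|D_{uv}|=x-2-2c+2\cdot\mathbf{1}_{uv}$, the total is
\[
2(n+c)+|D_{uv}|\,2n+(2n-2-2|D_{uv}|)(n-2)=2n^2-2n+4+2c+4|D_{uv}|,
\]
which equals $2n^2-2n-4-6c+4x+8\cdot\mathbf{1}_{uv}$. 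Using $x\le n+c-\mathbf{1}_{uv}$, this is at most $2n^2+2n-4-2c+4\cdot\mathbf{1}_{uv}$.

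Now compare with the edge count. We need $\sum d\ge 2n^2-2$, which forces $4\cdot\mathbf{1}_{uv}-2c-2\ge 0$. So $\mathbf{1}_{uv}=1$ and $c=1$, and every inequality in the chain must be nearly tight; the slack is only $0$.

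The remaining step, which is the main obstacle, is to exploit this tightness. With $c=\mathbf{1}_{uv}=1$, the slack is exactly zero, so every inequality is an equality. In particular $x=n$, which gives $|D_{uv}|=n-2\ge 2$, and every matched pair $(z,w_1)$ satisfies $d(z)+d(w_1)=2n$. Since $d(w)$ is the unique maximum degree above $\beta=n+1$, some $z\in D_{uv}\setminus\{w\}$ has $d(z)<d(w)$. The pairing is arbitrary, so we may match this $z$ with any $w_1$. Then $d(z)+d(w_1)<d(w)+d(w_1)\le 2n$, which contradicts tightness.

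If equality were not forced, we would instead use the unmatched vertices. Each of them has degree exactly $n-2$ in the tight case, and we would derive a contradiction via Lemma~\ref{dudv} or via equal-degree paths. The first thing I will check is therefore whether the arithmetic leaves slack exactly $0$ when $c=\mathbf{1}_{uv}=1$; the computation above indicates that it does, which yields the claim.
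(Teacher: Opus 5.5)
Your strategy is exactly the paper's. You take $w$ of maximum degree in $D_{uv}$ and use Claim~\ref{eD2N+1} to get $d(w)+d(w_1)\le 2n$, hence $d(w_1)\le n-2$. You then match $D_{uv}$ into $A_u\cup B_{uv}\cup A_v$, force $c=\mathbf{1}_{uv}=1$ with zero slack, and break tightness with some $z\in D_{uv}\setminus\{w\}$ having $d(z)<d(w)$. The structure is right, but your displayed computation contains an arithmetic error, and as written your inequality chain does not support your next step.

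\textbf{The error.} The degree sum is
\[
2(n+c)+2n|D_{uv}|+(2n-2-2|D_{uv}|)(n-2)=2n^2-4n+4+2c+4|D_{uv}|,
\]
not $2n^2-2n+4+2c+4|D_{uv}|$; you are off by $2n$. The correct chain is
\[
2n^2-4n-4-6c+4x+8\cdot\mathbf{1}_{uv}\le 2n^2-4-2c+4\cdot\mathbf{1}_{uv}.
\]
Your bound $2n^2+2n-4-2c+4\cdot\mathbf{1}_{uv}\ge 2n^2-2$ only yields $2n-2-2c+4\cdot\mathbf{1}_{uv}\ge 0$. That forces nothing about $c$ or $\mathbf{1}_{uv}$ and leaves slack $2n$. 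So neither $4\cdot\mathbf{1}_{uv}-2c-2\ge 0$ nor ``slack exactly $0$'' follows from what you displayed.

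\textbf{The fix.} With the corrected arithmetic, both statements are true. Tightness gives $x=n$ and $|D_{uv}|=x-2c=n-2\ge 2$, which supplies the needed $z$. The rest of your argument then goes through verbatim, as in the paper.

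Your final paragraph about a fallback via the unmatched vertices and Lemma~\ref{dudv} is unnecessary and somewhat incoherent, since it speaks of ``the tight case'' under the hypothesis that equality is not forced. Drop it.
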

\begin{proof}
 Choose $w$ to be a vertex of maximum degree in $D_{uv}$. Assume, to the contrary, that $d(w) \ge n+2$. By Claim \ref{eD2N+1}, for every $w_1\in A_u\cup B_{uv}\cup A_v$, we have $d(w)+d(w_1)\le 2n$, and consequently $d(w_1)\le n-2$ because $d(w)\ge n+2$. Moreover, for any $z\in D_{uv}$ and any $w_1\in A_u\cup B_{uv}\cup A_v$, we have $d(z)+d(w_1)\le d(w)+d(w_1)\le 2n$.

Recall that $|D_{uv}| = x-2-2c+2\cdot\mathbf{1}_{uv}$ and $|A_u\cup B_{uv}\cup A_v| = 2n-2-|D_{uv}|$. Note that $|D_{uv}| < |B_{uv}| = x$ because $c\ge 1$. Also, we have $|D_{uv}| \le |A_u\cup B_{uv}\cup A_v|$. Thus we can pair each vertex of $D_{uv}$ with a distinct vertex of $A_u\cup B_{uv}\cup A_v$, leaving $|A_u\cup B_{uv}\cup A_v| - |D_{uv}|$ vertices unpaired. For each paired couple $(z,w_1)$, we have $d(z)+d(w_1)\le 2n$, and each unpaired vertex has degree at most $n-2$. Let $D'$ be the set of vertices in $A_u\cup B_{uv}\cup A_v$ paired with $D_{uv}$. Therefore,
\begin{align*}    
\sum_{z\in V(G)} d(z) 
&\le \sum_{z\in D'\cup D_{uv}} d(z) + \sum_{z\in (A_u\cup B_{uv}\cup A_v)\setminus D'} d(z) + d(u)+d(v) \\ &\le |D_{uv}|(2n) + (|A_u\cup B_{uv}\cup A_v| - |D_{uv}|)(n-2) + d(u)+d(v)\\    
&= (x-2-2c+2\cdot\mathbf{1}_{uv})(2n)+(2n-2-2(x-2-2c+2\cdot\mathbf{1}_{uv}))(n-2)+2(n+c)\\    
&= 2n^2 - 4n - 6c + 4x + 8\cdot\mathbf{1}_{uv} - 4. \\    
&\le 2n^2-2c+4\cdot\mathbf{1}_{uv}-4.
\end{align*}
The last inequality holds as $|B_{uv}|=x \le n + c - \mathbf{1}_{uv}$.
Since $\sum_{v\in V(G)} d(v) = 2e(G) \ge 2(n^2-1)$, it follows that
\(
4\cdot\mathbf{1}_{uv} - 2c - 2\geq 0.
\)   
Since $\mathbf{1}_{uv} \in \{0,1\}$ and $c \ge 1$,
we conclude that $c = 1$ and $\mathbf{1}_{uv} = 1$.
Substituting these equalities into the previous estimates,
we find that every intermediate inequality must hold with equality.
Hence we obtain $x = n + c - \mathbf{1}_{uv} = n$
and consequently $|D_{uv}| = x - 2 - 2c + 2\cdot\mathbf{1}_{uv} = n - 2$.
Now $|D_{uv}|=n-2\ge 2$. Because $w$ is a vertex of maximum degree in $D_{uv}$ and $d(w)\ge n+2$, if every vertex in $D_{uv}$ had degree $d(w)$, then $d(w)$ would appear at least twice, contradicting the definition of $\beta$. Thus there exists a vertex $z\in D_{uv}$ distinct from $w$ such that $d(z) < d(w)$. For this $z$, we have $d(z)+d(w_1) < d(w)+d(w_1) \le 2n$ for every $w_1\in A_u\cup B_{uv}\cup A_v$. Consequently, in the pairing, the sum for the couple containing $z$ is strictly less than $2n$, contradicting the equality required above. This completes the proof.
\end{proof}
\begin{claim}\label{e2.10}
    If $|D_{uv}| \neq 0$, then either $d(w) \le n-1$ for every vertex $w \in D_{uv}$, or $G$ is isomorphic to $K_{n-1,n+1}$.
\end{claim}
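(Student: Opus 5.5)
We mirror the proof of Claim~\ref{2.10}, with the constants shifted to $2n$ vertices and $2e(G)\ge 2n^2-2$. Throughout we use $|D_{uv}|=x-2-2c+2\cdot\mathbf{1}_{uv}$ and $x\le n+c-\mathbf{1}_{uv}$, which give $|D_{uv}|\le n-c-2+\mathbf{1}_{uv}$.

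By Claim~\ref{e2.9}, every $w\in D_{uv}$ has $d(w)\le n+1$. So suppose some $w\in D_{uv}$ has $d(w)\in\{n,n+1\}$. Claim~\ref{eD2N+1} then says that every $w_1\in A_u\cup B_{uv}\cup A_v$ satisfies $d(w_1)\le 2n-1-d(w)$, or else $d(w_1)=2n-d(w)$ and $ww_1\in E(G)$. We split into two cases according to whether this second alternative occurs.

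\textbf{Case 1: some $w_1$ has $d(w_1)=2n-d(w)$ and $ww_1\in E(G)$.}

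First we show that $w_1$ has no neighbour in $A_u\cup B_{uv}\cup A_v$. Observation~\ref{1} applied to $(w,v)$ gives $|N(w)\cap N(v)|\ge n+(n+1)-2n+2=3$. If $w_2\in N(w_1)\cap N(u)$, pick $w_3\in N(w)\cap N(v)$ distinct from $w_1,w_2$. Then $uw_2w_1ww_3v$ is a forbidden path.

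Hence $N(w_1)\subseteq\{u,v\}\cup D_{uv}$, so $|D_{uv}|\ge d(w_1)-2=2n-d(w)-2\ge n-3$.

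\emph{Subcase $\mathbf{1}_{uv}=0$.} The bound $|D_{uv}|\le n-c-2$ forces $c=1$, $|D_{uv}|=n-3$, $x=n+1$ and $A_u=A_v=\emptyset$.
\begin{itemize}
\item If $d(w)=n$, then $d(w_1)=n>n-1\ge|\{u,v\}\cup D_{uv}|$, which is impossible.
\item Hence $d(w)=n+1$, and then every vertex of $B_{uv}$ has degree at most $n-1$. Together with $d(z)\le n+1$ on $D_{uv}\cup\{u,v\}$, this gives $\sum_z d(z)\le (n-1)(n+1)+(n+1)(n-1)=2n^2-2$.
\item So equality holds throughout. Every vertex of $B_{uv}$ has degree exactly $n-1=2n-d(w)$, so by Claim~\ref{eD2N+1} it is adjacent to $w$. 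The argument above then shows it has no neighbour in $B_{uv}$.
\item Thus $B_{uv}$ is independent, of size $n+1$, with all degrees $n-1$. Each of its vertices is therefore joined to all of $\{u,v\}\cup D_{uv}$, a set of size $n-1$.
\item The degree sum on $\{u,v\}\cup D_{uv}$ is exactly $(n-1)(n+1)$, which is already accounted for by the edges to $B_{uv}$. So $\{u,v\}\cup D_{uv}$ is independent and $G\cong K_{n-1,n+1}$.
\end{itemize}

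\emph{Subcase $\mathbf{1}_{uv}=1$.} Now $|D_{uv}|\le n-c-1$ together with $|D_{uv}|\ge n-3$ gives $c\le 2$. As in the odd case, at most one vertex of $D_{uv}$ has degree $n+1$. Indeed, if $u_1,u_2\in D_{uv}$ both had degree $n+1$, Observation~\ref{1} gives $w_1\in N(u_1)\cap N(u)$ and $w_2\in N(u_2)\cap N(v)\setminus\{w_1\}$, and $u_1w_1uvw_2u_2$ is forbidden.

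Here I expect the main obstacle. The crude count gives about $2n^2+2c+1$, which no longer beats $2n^2-2$; the even case has much less slack than the odd one. I would first try to sharpen the count by pairing, as in Claim~\ref{e2.9}:
\begin{itemize}
\item Each $z\in D_{uv}$ with $d(z)\ge n$ is paired with a distinct vertex of $A_u\cup B_{uv}\cup A_v$, and the pair has degree sum at most $2n$.
\item The vertex $w_1$ constructed above has all of its at least $n-3$ non-$\{u,v\}$ neighbours in $D_{uv}$. This should force many vertices of $D_{uv}$ to be adjacent only to vertices like $w_1$.
\item Using this, I would exhibit a forbidden path $u_1w_1u\,y\,v\,u_2$ between two equal-degree vertices $u_1,u_2$ of $D_{uv}$ (or of $A_u\cup B_{uv}\cup A_v$). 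The middle vertex $y$ can be taken in $B_{uv}$ since $x\ge 2c\ge 2$.
\item Alternatively, Lemma~\ref{dudv} bounds degrees in $B_{uv}$, and that might recover the missing $2c+3$.
\end{itemize}

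\textbf{Case 2: every $w_1\in A_u\cup B_{uv}\cup A_v$ has $d(w_1)\le 2n-1-d(w)$.} Then all these degrees are at most $n-1$, and at most $n-2$ if $d(w)=n+1$. In the latter situation, summing with $d\le n+1$ on $D_{uv}$ and $d(u)=d(v)=n+c$, and inserting $|D_{uv}|\le n-c-2+\mathbf{1}_{uv}$, gives roughly $2n^2-2n+O(1)$. This is a contradiction for $n\ge 13$.

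When $d(w)=n$ but no vertex of $D_{uv}$ has degree $n+1$, the same computation with $D_{uv}$-degrees at most $n$ and the rest at most $n-1$ yields $\sum_z d(z)\le 2n^2-4+2\cdot\mathbf{1}_{uv}-c$. This contradicts $2e(G)\ge 2n^2-2$, completing Case 2.
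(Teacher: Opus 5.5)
Your proposal has a genuine gap in two places, both in the final degree counts. First, Case 1 with $\mathbf{1}_{uv}=1$ is left open: the pairing, the new forbidden path and the appeal to Lemma~\ref{dudv} are only suggestions. No new idea is needed here, because your ``crude count'' $2n^2+2c+1$ discards information you already have. Choose $w$ of maximum degree $n+c_1$ ($c_1\in\{0,1\}$) in $D_{uv}$. Then Claim~\ref{eD2N+1} bounds \emph{every} vertex of $A_u\cup B_{uv}\cup A_v$ by $2n-d(w)=n-c_1$, not by $n$. Your path $u_1w_1uvw_2u_2$ also works verbatim for two vertices of degree $n$, since Observation~\ref{1} still gives three common neighbours. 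So in both cases at most one vertex of $D_{uv}$ has degree $n+c_1$. With $|D_{uv}|=x-2c$ this gives
\[
\sum_z d(z)\le (|D_{uv}|-1)(n+c_1-1)+(n+c_1)+(2n-2-|D_{uv}|)(n-c_1)+2(n+c)=2n^2-x+4c+1+(2x-2n-4c+2)c_1 .
\]
For $c_1=0$, your bound $|D_{uv}|\ge 2n-d(w)-2=n-2$ together with $|D_{uv}|\le n-c-1$ forces $c=1$ and $x=n$, so the total is $2n^2-n+5$. For $c_1=1$ it is $2n^2-2n+3+x\le 2n^2-n+c+2$ with $c\le 2$. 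Both are below $2n^2-2$ for $n\ge 13$, and this is exactly how the paper closes the subcase.

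Second, in Case 2 when the maximum degree in $D_{uv}$ is $n$, your bound $2n^2-4+2\cdot\mathbf{1}_{uv}-c$ is an arithmetic error. The correct value is
\[
|D_{uv}|\,n+(2n-2-|D_{uv}|)(n-1)+2(n+c)=2n^2-2n+x+2\cdot\mathbf{1}_{uv}\le 2n^2-n+c+\mathbf{1}_{uv}.
\]
This does not contradict $2e(G)\ge 2n^2-2$ when $c\ge n-2-\mathbf{1}_{uv}$. For instance, $c=n-2$, $\mathbf{1}_{uv}=1$, $x=2n-3$, $|D_{uv}|=1$ gives $2n^2-1$. This residual range needs an extra ingredient, and the paper uses Lemma~\ref{dudv}. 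Every $z\in B_{uv}$ has $d(z)\le n-c+3$, and using this instead of $n-1$ on $B_{uv}$ gives $\sum_z d(z)\le 2n^2-2n+(5-c)x+2\cdot\mathbf{1}_{uv}\le 2n^2-2n+2\cdot\mathbf{1}_{uv}$. That is a contradiction, since $c\ge n-3>5$.

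The rest of your proposal is sound. The subcase $\mathbf{1}_{uv}=0$ of Case 1 is correct, and more explicit than the paper's ``straightforward verification''. Case 2 with $d(w)=n+1$ also works, although the total there is $2n^2-n-c-2+3\cdot\mathbf{1}_{uv}$, not $2n^2-2n+O(1)$.
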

\begin{proof}
    By Claim~\ref{e2.9}, every vertex in $D_{uv}$ has degree at most $n+1$.
Suppose for contradiction that there exists a vertex $w \in D_{uv}$ with $d(w) = n + c_1$ where $c_1 \in \{0, 1\}$. (Otherwise, all vertices in $D_{uv}$ have degree at most $n-1$, and we are done.)
Apply Claim~\ref{eD2N+1} to $w$ and any $w_1 \in A_u \cup B_{uv} \cup A_v$.
Since $d(w) = n + c_1$, we have either $d(w_1) \le n - c_1 - 1$, or $d(w_1) = n - c_1$ and $ww_1 \in E(G)$.
In particular, $d(w_1) \le n - c_1$ for all $w_1 \in A_u \cup B_{uv} \cup A_v$.

\bigskip
{\bf Case 1}: There exists $w_1\in A_u\cup B_{uv}\cup A_v$ with $d(w_1)=n-c_1$.
\medskip

We first show that $w_1$ has no neighbor in $A_u\cup B_{uv}\cup A_v$. Suppose, for contradiction, that there exists $w_2\in N(w_1)\cap (A_u\cup B_{uv}\cup A_v)$. Without loss of generality, assume $w_2\in N(u)$. Since $w\in D_{uv}$, we have $wu,wv\notin E(G)$. Applying Observation \ref{1} to $(w,v)$ yields $|N(w)\cap N(v)|\ge 3$. Hence we can pick $w_3\in N(w)\cap N(v)$ distinct from $w_1,w_2$. Then $u w_2 w_1 w w_3 v$ is a path of length five joining the equal-degree vertices $u$ and $v$, a contradiction. Thus $N(w_1)\cap (A_u\cup B_{uv}\cup A_v)=\emptyset$.

Since $d(w_1)=n-c_1$ and $N(w_1)\cap (A_u\cup B_{uv}\cup A_v)=\emptyset$, we must have $|N(w_1)\cap D_{uv}|\ge n-c_1-2$. Consequently $|D_{uv}|\ge n-c_1-2$. On the other hand, from the definition of $D_{uv}$ and $x=|B_{uv}|\le n+c-\mathbf{1}_{uv}$, we have
\[
|D_{uv}| = x-2-2c+2\cdot\mathbf{1}_{uv} \le n-c-2+\mathbf{1}_{uv}.
\]

If $\mathbf{1}_{uv} = 0$, then $|D_{uv}| \le n - c - 2 \le n - 3$.
Combined with $|D_{uv}| \ge n - c_1 - 2$, we obtain $|D_{uv}| = n - 3$, $c = 1$, and $c_1 = 1$. Since $c=1$ and $|D_{uv}|=n-3$, we obtain $|B_{uv}|=x=n+1$ and $|A_u|=|A_v|=0$. If there exists a vertex in $ B_{uv}$ with degree at most $n-2$, then since every vertex in $D_{uv}$ has degree at most $n+1$, every vertex in $ B_{uv}$ has degree at most $n-1$, and $d(u)=d(v)=n+1$, this contradicts our assumption that $G$ has at least $n^2-1$ edges.
This implies that all $x = n+1$ vertices in $B_{uv}$ have degree $n-1$, so the vertices in $B_{uv}$ are pairwise nonadjacent.
A straightforward verification shows that $G$ is exactly $K_{n-1,n+1}$ in this subcase. 

If $\mathbf{1}_{uv} = 1$ and $c_1 = 0$, then $|D_{uv}| \le n - c - 1$.
Together with $|D_{uv}| \ge n - c_1 - 2 = n - 2$, this implies $c = 1$ and $|D_{uv}| =x-2c= n - 2$.
If $\mathbf{1}_{uv} = 1$ and $c_1 = 1$, then $|D_{uv}| \le n - c - 1$.
Together with $|D_{uv}| \ge n - c_1 - 2$, this implies $1 \le c \le 2$. 
We now argue that no two vertices in $D_{uv}$ can have degree $n+c_1$. Indeed, if $u_1,u_2\in D_{uv}$ both had degree $n+c_1$, applying  Observation \ref{1} to the pairs $(u,u_1)$ and $(v,u_2)$ yields at least three common neighbors for each pair. So we may assume that $w_1 \in N(u_1) \cap N(u)$ and $w_2 \in N(u_2) \cap N(v) \setminus \{w_1\}$, leading to a path $u_1 w_1 u v w_2 u_2$ of length five with equal-degree endpoints $u_1,u_2$, which is a contradiction. Therefore at most one vertex in $D_{uv}$ has degree $n+c_1$, and all others have degree at most $n+c_1-1$. Then
\begin{align}\notag   
\sum_{z\in V(G)} d(z) &\le (|D_{uv}|-1)(n+c_1-1) + (n+c_1) + (2n-2-|D_{uv}|)(n-c_1)+ d(u)+d(v)\\ 
&=(x-2c-1)(n+c_1-1)+(n+c_1)+(2n-2-(x-2c))(n-c_1)+2(n+c)\notag\\    
&=2n^2+4c+1+(2x-2n-4c+2)c_1-x.\label{ec1}
\end{align}
If $\mathbf{1}_{uv} = 1$ and $c_1 = 0$, then \eqref{ec1} implies
\[
\sum_{v \in V(G)} d(v) \le 2n^2 + 4c + 1 - x = 2n^2 + 5 - n.
\]
The last equality holds since $c = 1$ and $|D_{uv}| = x - 2c = n - 2$.
Since we assume $n \ge 13$, this contradicts our assumption that $G$ has at least $n^2 - 1$ edges. 

If $\mathbf{1}_{uv} = 1$ and $c_1 = 1$, then \eqref{ec1} implies
\[
\sum_{v \in V(G)} d(v) \le 2n^2 - 2n + 3 + x \le 2n^2 - n + c + 3 - \mathbf{1}_{uv}.
\]
The last inequality holds since $x \le n + c - \mathbf{1}_{uv}$.
As $1 \le c \le 2$ and we assume $n \ge 13$, this contradicts our assumption that $G$ has at least $n^2 - 1$ edges.

\bigskip
{\bf Case 2}: For every vertex $w \in A_u\cup B_{uv}\cup A_v$, $d(w) \le n-c_1-1$.
\medskip

Since every vertex in $D_{uv}$ has degree at most $n + c_1$
and every vertex in $A_u \cup B_{uv} \cup A_v$ has degree at most $n - c_1-1$,
we obtain
\begin{align}
    \sum_{z\in V(G)} d(z) &\le |D_{uv}|(n+c_1) + (2n-2-|D_{uv}|)(n-c_1-1) + d(u)+d(v)\notag\\
    &=(x-2-2c+2\cdot\mathbf{1}_{uv})(n+c_1)+(2n-2-(x-2-2c+2\cdot\mathbf{1}_{uv}))(n-c_1-1)+2(n+c)\notag\\
    &=2n^2-2n+x+2\cdot\mathbf{1}_{uv}+(2x+4\cdot\mathbf{1}_{uv}-2n-4c-2)c_1\notag\\
    &\le 2n^2-n+c+\mathbf{1}_{uv}+(2\cdot\mathbf{1}_{uv}-2c-2)c_1.\label{ecc}
\end{align}
The last inequality holds since $x \le n + c - \mathbf{1}_{uv}$. If $c_1 = 1$, then \eqref{ecc} implies
\[
\sum_{v \in V(G)} d(v) \le 2n^2 - n - c + 3\cdot\mathbf{1}_{uv} - 2.
\]
Since $n \ge 13$, this contradicts our assumption that $e(G)\ge n^2 - 1$.

If $c_1 = 0$, then \eqref{ecc} implies
\[
\sum_{v \in V(G)} d(v) \le 2n^2 - n + c + \mathbf{1}_{uv}.
\]
Since $G$ has at least $n^2 - 1$ edges, we further obtain $c + \mathbf{1}_{uv} + 2 - n \ge 0$, i.e., $c \ge n - 3$.
Then by Lemma~\ref{dudv}, for any $w \in B_{uv}$, we have $d(w) \le 2n - (n + c) + 3 = n - c + 3$.
Since every vertex in $D_{uv}$ has degree at most $n + c_1 = n$, every vertex in $A_u \cup A_v$ has degree at most $n - c_1 - 1 = n - 1$, and every vertex in $B_{uv}$ has degree at most $n - c + 3$, we have
\begin{align*}
\sum_{z \in V(G)} d(z)
&\le |D_{uv}|  n + \bigl(2n - 2 - (|D_{uv}| + |B_{uv}|)\bigr)  (n-1) + |B_{uv}|(n - c + 3) + d(u) + d(v) \\
&= \bigl(x - 2 - 2c + 2\cdot\mathbf{1}_{uv}\bigr)n + \bigl(2n  - (2x  - 2c + 2\cdot\mathbf{1}_{uv})\bigr)(n-1) + x(n - c + 3) + 2(n + c) \\
&= 2n^2 - 2n - cx + 5x + 2\cdot\mathbf{1}_{uv} \\
&\le 2n^2 - 2n + 2\cdot\mathbf{1}_{uv}.
\end{align*}
The last inequality holds since $c \ge n - 3$ and $n \ge 13$.
This contradicts our assumption that $e(G)\ge n^2 - 1$.
%Thus Case 2 cannot occur.
\end{proof}

\begin{claim}\label{e2.11}
    For any two vertices $w_1, w_2 \in A_u \cup B_{uv} \cup A_v$ with $d(w_1) \ge n $ and $d(w_2) \ge n $, we have $w_1w_2 \in E(G)$.
\end{claim}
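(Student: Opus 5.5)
The plan is to mirror the proof of Claim~\ref{2.11}, replacing the threshold $n+1$ by $n$ and the vertex count $2n+1$ by $2n$. Then Observation~\ref{1} gives the same common-neighbourhood bounds.

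Suppose $w_1w_2\notin E(G)$, and without loss of generality let $w_1\in N(u)$. Since $d(w_1)+d(w_2)\ge 2n$ and $|V(G)|=2n$, Observation~\ref{1} gives $|N(w_1)\cap N(w_2)|\ge 2+|D_{w_1w_2}|\ge 2$. This is one less than the bound $3$ available in the odd case, so the first task is to find $u_1\in (N(w_1)\cap N(w_2))\setminus\{u,v\}$.
\begin{itemize}
\item If $N(w_1)\cap N(w_2)=\{u,v\}$ exactly, then $D_{w_1w_2}=\emptyset$ and every other vertex lies in $N(w_1)\cup N(w_2)$.
\item In that situation, take a neighbour $z\ne u,v,w_1,w_2$ of $v$; this is possible because $d(v)\ge n+1$ is large. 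Without loss of generality $z$ is adjacent to $w_1$ or $w_2$.
\item Then build a path such as $u w_1 z \cdots v$, or use $u w_2$ and $v w_1$ adjacencies, to reach a contradiction.
\end{itemize}

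Once $u_1$ is available, I would repeat the argument of Claim~\ref{2.11} verbatim. The path $uw_1u_1w_2u_2v$ forces $N(w_2)\cap N(v)\subseteq\{u,u_1\}$. Observation~\ref{1} applied to $(w_2,v)$ gives $|N(w_2)\cap N(v)|\ge d(w_2)+d(v)-2n+2(1-\mathbf 1_{vw_2})\ge 3$ when $vw_2\notin E(G)$, hence $vw_2\in E(G)$. Symmetrically, $N(w_1)\cap N(u)\subseteq\{v,u_1\}$ and $uw_1\in E(G)$. Next I would show $w_1v,w_2u\in E(G)$ as before.

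I would then split on whether $d(w_2)=d(v)$.
\begin{itemize}
\item \textbf{Case $d(w_2)\ne d(v)$.} Here $d(w_2)+d(v)\ge 2n+2$, so Observation~\ref{1} gives $|N(w_2)\cap N(v)|\ge 2$ (using $vw_2\in E$). This forces $N(w_2)\cap N(v)=\{u,u_1\}$ and $N(w_1)\cap N(w_2)=\{u,v,u_1\}$, hence $|D_{w_1w_2}|\le 1$. Then $N(v)$ mostly splits between $N(w_1)$ and $N(w_2)$, giving $|N(v)\cap N(w_1)|\ge (n-3)/2\ge 5$ for $n\ge13$, and the path $vu_2w_1u_1w_2u$ gives the contradiction.
\item \textbf{Case $d(w_2)=d(v)$.} The path $w_2u_1w_1uu_2v$ bounds $|N(u)\cap N(v)|\le 3$, so $|D_{uv}|\le 3$. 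Then $|N(w_1)\cap N(v)|\ge (n-5)/2\ge 4$, which yields the path $uw_2u_1w_1u_2v$.
\end{itemize}

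The main obstacle is the lost slack: every common-neighbourhood bound drops by one. The delicate points are securing the first vertex $u_1\notin\{u,v\}$ and the step proving $w_1v\in E(G)$, which used two vertices of $N(w_1)\cap N(w_2)$ outside $\{u,v\}$. For the latter, when $w_1v\notin E(G)$ I would instead apply Observation~\ref{1} to $(w_1,v)$, obtaining at least $3$ common neighbours, to route $u w_1 \cdot \cdot\, w_2 v$-type paths through $N(w_1)\cap N(v)$. I would use $n\ge13$ to absorb the extra constant losses in the halving estimates.
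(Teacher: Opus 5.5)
There is a genuine gap, located exactly where the threshold $n$ differs from the threshold $n+1$ of Claim~\ref{2.11}.

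\textbf{The case $d(w_2)\neq d(v)$ fails.} You assert $d(w_2)+d(v)\ge 2n+2$, but the hypotheses only give $d(w_2)\ge n$ and $d(v)\ge n+1$, so $d(w_2)=n$, $d(v)=n+1$ is allowed. Then $vw_2\in E(G)$ and $|V(G)|=2n$ mean Observation~\ref{1} yields only $|N(w_2)\cap N(v)|\ge 1$. You can no longer conclude $N(w_2)\cap N(v)=\{u,u_1\}$. Hence you lose $u_1v\in E(G)$, then $N(w_1)\cap N(w_2)=\{u,v,u_1\}$, then $|D_{w_1w_2}|\le 1$, and the halving estimate feeding the path $vu_2w_1u_1w_2u$ collapses. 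For instance, if $u_1\notin N(u)\cup N(v)$, Observation~\ref{1} together with your inclusions forces $N(w_1)\cap N(u)=\{v\}$, $N(w_2)\cap N(v)=\{u\}$, $D_{w_1u}=D_{w_2v}=\emptyset$ and $d(w_1)=d(w_2)=n$. Your proposal never treats this configuration.

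\textbf{The two steps you flag as delicate are only announced, not proved.}
\begin{itemize}
\item Finding $u_1\notin\{u,v\}$: the missing observation is that $N(w_1)\cap N(w_2)=\{u,v\}$ forces $D_{w_1w_2}=\emptyset$ and $d(w_1)+d(w_2)=2n$. So $d(w_1)=d(w_2)=n$ and $w_1,w_2\in B_{uv}$. The natural contradiction is then a path between $w_1$ and $w_2$, not a $u$--$v$ path: e.g.\ $w_1uavbw_2$ or $w_1bvauw_2$ with $a\in B_{uv}\setminus\{w_1,w_2\}$ and $b\in N(v)\setminus\{u,w_1,w_2,a\}$. When $B_{uv}=\{w_1,w_2\}$, use $A_u$ and $A_v$ in place of $a$ and $b$.
\item Showing $w_1v\in E(G)$: split on $uw_2$. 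If $uw_2\in E(G)$, take $x\in (N(w_1)\cap N(v))\setminus\{u,u_1\}$, which exists since $|N(w_1)\cap N(v)|\ge 3$, and use $uw_2u_1w_1xv$. If $uw_2\notin E(G)$, then $u,v\notin N(w_1)\cap N(w_2)$, so this set contains some $u_2\ne u_1$. Also $N(w_1)\cap N(u)=\{u_1\}$, by Observation~\ref{1} and your inclusion $N(w_1)\cap N(u)\subseteq\{v,u_1\}$. Then $uu_1w_1u_2w_2v$ works.
\end{itemize}

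\textbf{Comparison with the paper.} The paper's argument uses $d(w_1),d(w_2),d(v)\ge n+1$ at the two applications of Observation~\ref{1} that finish it. This gives $|N(w_1)\cap N(w_2)|\ge 4$ and $|N(v)\cap N(w_2)|\ge 2$. One then picks $u_1\in (N(v)\cap N(w_2))\setminus\{u\}$ and $u_2\in (N(w_1)\cap N(w_2))\setminus\{u,v,u_1\}$, and $uw_1u_2w_2u_1v$ is the forbidden path. So the paper effectively proves the claim for degrees at least $n+1$, which is the only form used afterwards (in Claim~\ref{e2.15}), and the loss of slack you worry about never arises. Your mirrored argument is fine in that range. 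For the threshold $n$ as stated, you still have to supply the degree-$n$ analysis above.
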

\begin{proof}
Suppose for a contradiction that $w_1w_2 \notin E(G)$, and assume that $w_1 \in N(u)$.

\textbf{Case 1}: $(N(w_1) \cap N(w_2)) \setminus \{u, v\} \neq \emptyset$.

Suppose there exists a vertex $w \in (N(w_1) \cap N(w_2)) \setminus \{u, v\}$.
Then $w_2v \in E(G)$; otherwise, by Observation \ref{1}, we have $|N(w_2) \cap N(v)| \ge 3$.
Since $w_1w_2 \notin E(G)$, there exists a vertex $u_1 \in (N(w_2) \cap N(v)) \setminus \{u, w\}$.
Then $u w_1 w w_2 u_1 v$ is a path of length five whose endpoints $u$ and $v$ have the same degree, a contradiction.

We next show that $w_1v \in E(G)$ and $w_2u \in E(G)$.
If $w_1v \notin E(G)$ or $w_2u \notin E(G)$, then without loss of generality, assume $w_1v \notin E(G)$.
By Observation \ref{1}, we have $|N(w_1) \cap N(v)| \ge 3$.
Since $w_1w_2 \notin E(G)$, there exists a vertex $u_1 \in (N(w_2) \cap N(v)) \setminus \{u, w\}$, which implies $w_2u \notin E(G)$; otherwise, $v u_1 w_1 w w_2 u$ is a path of length five whose endpoints $u$ and $v$ have the same degree, a contradiction.
Since $w_1v \notin E(G)$, by Observation \ref{1}, there exists a vertex $u_1 \in (N(w_1) \cap N(u)) \setminus \{v, w_2\}$ (possibly $u_1 = w$).
Since $w_1v$, $w_2u$, and $w_1w_2$ are all not in $E(G)$, by Observation \ref{1}, there exists a vertex $u_2 \in (N(w_1) \cap N(w_2)) \setminus \{u, v, u_1\}$.
Then $u u_1 w_1 u_2 w_2 v$ is a path of length five whose endpoints $u$ and $v$ have the same degree, a contradiction.
This completes the proof that $w_1v \in E(G)$ and $w_2u \in E(G)$.

By Observation \ref{1} and $d(v), d(w_2) \ge n + 1$, we get
$|N(v) \cap N(w_2)| \ge 2.$
Since $w_1w_2 \notin E(G)$, there exists a vertex $u_1 \in (N(v) \cap N(w_2)) \setminus \{u, w_1\}$.
By Observation \ref{1} and $d(w_1), d(w_2) \ge n + 1$, we have $|N(w_1) \cap N(w_2)| \ge 4$.
Thus, there exists a vertex $u_2 \in (N(w_1) \cap N(w_2)) \setminus \{u, v, u_1\}$.
Then $u w_1 u_2 w_2 u_1 v$ is a path of length five whose endpoints $u$ and $v$ have equal degree, a contradiction.
\end{proof}
\begin{claim}\label{e2.15}
    The number of vertices in $A_u \cup B_{uv} \cup A_v$ with degree at least $n + 1$ is at most two.
\end{claim}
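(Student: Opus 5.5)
I will follow the proof of Claim~\ref{2.15} in the odd case almost verbatim, now working on $2n$ vertices. Suppose, for contradiction, that $w_1,w_2,w_3\in A_u\cup B_{uv}\cup A_v$ are distinct with $d(w_i)\ge n+1$. Since $n+1\ge n$, Claim~\ref{e2.11} applies and shows that $w_1,w_2,w_3$ are pairwise adjacent. Without loss of generality $w_1\in N(u)$. Recall that $d(u)=d(v)=n+c\ge n+1$. On $2n$ vertices, Observation~\ref{1} reads $|B_{xy}|=d(x)+d(y)+|D_{xy}|-2n+2(1-\mathbf 1_{xy})$. In particular, two nonadjacent vertices of degree at least $n+1$ have at least $4$ common neighbours. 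This is one more than in the odd case, so the counting only becomes easier.

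\emph{Step 1: $w_2v,w_3v\in E(G)$.} Suppose $w_2v\notin E(G)$. Then $|N(w_2)\cap N(v)|\ge 4$, so we may pick $u_1\in N(w_2)\cap N(v)\setminus\{w_1,w_3,u\}$. This gives the path $uw_1w_3w_2u_1v$ of length five between $u$ and $v$, a contradiction. By symmetry between $w_2$ and $w_3$, we also get $w_3v\in E(G)$.

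\emph{Step 2: bound $D_{w_1w_2}$.} Any $y\in N(w_1)\cap N(w_2)\setminus\{w_3,u,v\}$ yields the path $uw_1yw_2w_3v$, a contradiction. Hence $N(w_1)\cap N(w_2)\subseteq\{w_3,u,v\}$. Since $w_1w_2\in E(G)$, Observation~\ref{1} gives
\[
3\ge |B_{w_1w_2}|\ge 2(n+1)+|D_{w_1w_2}|-2n,
\]
so $|D_{w_1w_2}|\le 1$.

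\emph{Step 3: the final contradiction.} Every neighbour of $w_3$ outside $D_{w_1w_2}\cup\{w_1,w_2\}$ lies in $N(w_1)\cup N(w_2)$. Therefore
\[
|N(w_3)\cap N(w_1)|+|N(w_3)\cap N(w_2)|\ge n+1-3=n-2\ge 11,
\]
and one of these sets, say $N(w_3)\cap N(w_1)$ (the case of $N(w_3)\cap N(w_2)$ is symmetric), has size at least $6$. Choose $u_1\in N(w_3)\cap N(w_1)\setminus\{w_2,u,v\}$. Then $uw_1u_1w_3w_2v$ is a path of length five from $u$ to $v$. It uses $uw_1$, the edges $w_1u_1$, $u_1w_3$ and $w_3w_2$, and the edge $w_2v$ from Step~1. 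This is a contradiction.

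In the symmetric case with $N(w_3)\cap N(w_2)$ large, we instead use the path $uw_1w_2u_1w_3v$. This requires $u_1\ne w_1$, which holds since $u_1\in N(w_3)\cap N(w_2)$ is chosen outside $\{w_1,u,v\}$, and it uses $w_3v\in E(G)$.

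The main obstacle is in Step~1, where the path $uw_1w_3w_2u_1v$ needs $u_1\ne w_1,w_3,u$ and this must be available in all configurations. The fallback is that if $u_1$ cannot be found, then $N(w_2)\cap N(v)\subseteq\{w_1,w_3,u\}$, which contradicts the bound $|N(w_2)\cap N(v)|\ge 4$. So Step~1 goes through, and the argument is in fact simpler than the odd case. The only other thing to check is that $n\ge 13$ makes the pigeonhole bound in Step~3 large enough to avoid the at most three excluded vertices.
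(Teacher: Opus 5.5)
Your proposal is correct and follows the paper's proof essentially step for step. You use the four common neighbours of a nonadjacent pair to force $w_2v,w_3v\in E(G)$ via the path $uw_1w_3w_2u_1v$. You then confine $N(w_1)\cap N(w_2)$ to $\{w_3,u,v\}$ to get $|D_{w_1w_2}|\le 1$, and finish by pigeonholing $N(w_3)$ between $N(w_1)$ and $N(w_2)$. Your explicit treatment of both pigeonhole cases, with the valid paths $uw_1u_1w_3w_2v$ and $uw_1w_2u_1w_3v$, is actually cleaner than the printed argument. The paper uses a ``without loss of generality'' there, and its final path $uw_2w_3u_1w_1v$ would need the unestablished edges $uw_2$ and $w_1v$; the intended orientation is $vw_2w_3u_1w_1u$, as in the odd case.
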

\begin{proof}
   Suppose, for contradiction, that there exist three distinct vertices $w_1,w_2,w_3\in A_u\cup B_{uv}\cup A_v$ with $d(w_i)\ge n+1$ for $i=1,2,3$. By Claim \ref{e2.11}, they are pairwise adjacent. Without loss of generality, assume $w_1\in N(u)$.

We first show $w_2 \in N(v)$.
Assume $w_2v \notin E(G)$.
Applying Observation \ref{1} to $(w_2, v)$ yields $|N(w_2) \cap N(v)| \ge 4$.
Thus, there exists a vertex $u_1 \in (N(w_2) \cap N(v)) \setminus \{w_1, w_3, u\}$.
Then $u w_1 w_3 w_2 u_1 v$ is a path of length five with endpoints $u$ and $v$ of equal degree, a contradiction.
By symmetry, $w_3v \in E(G)$.

Since $\{w_2, w_3\} \subseteq N(v)$, suppose there exists a vertex $w \in N(w_1) \cap N(w_2) \setminus \{w_3, v, u\}$. Then $uw_1ww_2w_3v$ is a path of length five with $d(u) = d(v)$, which is a contradiction. Thus, $N(w_1) \cap N(w_2) \subseteq \{w_3, v, u\}$. By Observation \ref{1}, we have $|D_{w_1w_2}| \le 1$. 
Since $d(w_3) \ge n + 1$, it follows that
\(|N(w_3) \setminus (D_{w_1w_2} \cup \{w_1, w_2\})| \ge n - 2.\)
Therefore, together with our assumption that $n \ge 13$, this implies that either $|N(w_3) \cap N(w_1)| \ge \frac{n - 2}{2} > 5$ or $|N(w_3) \cap N(w_2)| \ge \frac{n - 2}{2} > 5$.  
Without loss of generality, assume that $|N(w_3) \cap N(w_1)| \ge 8$. Then there exists a vertex $u_1 \in N(w_3) \cap N(w_1)$ distinct from $w_2, u, v$. Thus, $uw_2w_3u_1w_1v$ is a path of length five with $d(u) = d(v)$, a contradiction.  
This completes the proof.
\end{proof}
\begin{claim}\label{e2n+4}
    For any vertices $w_1, w_2 \in A_u \cup B_{uv} \cup A_v$, we have $d(w_1) + d(w_2) \le 2n + 3$.
\end{claim}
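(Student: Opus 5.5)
We mirror the proof of Claim~\ref{2n+4}, adjusting the numerology to the $2n$-vertex setting.

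\textbf{Setup.} Suppose, for a contradiction, that there exist $w_1,w_2\in A_u\cup B_{uv}\cup A_v$ with $d(w_1)+d(w_2)\ge 2n+4$. Since degrees are integers, one of them, say $w_1$, has $d(w_1)\ge n+2$. By the symmetry between $u$ and $v$ we may assume $w_2\in N(u)$.

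\textbf{A common neighbour of $w_1$ and $v$.} Apply Observation~\ref{1} to the pair $(w_1,v)$ with $|V(G)|=2n$. This gives
\[
|N(w_1)\cap N(v)|\ge (n+2)+(n+1)-2n+2(1-\mathbf{1}_{w_1v})\ge 3 .
\]
Hence we can pick $w\in N(w_1)\cap N(v)$ with $w\notin\{u,w_2\}$.

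\textbf{A common neighbour of $w_1$ and $w_2$.} Apply Observation~\ref{1} to the pair $(w_1,w_2)$. This gives
\[
|N(w_1)\cap N(w_2)|\ge 2n+4-2n+2(1-\mathbf{1}_{w_1w_2})\ge 4 .
\]
Hence we can pick $u_1\in N(w_1)\cap N(w_2)$ with $u_1\notin\{u,v,w\}$.

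\textbf{The path.} Consider $u\,w_2\,u_1\,w_1\,w\,v$. Its vertices are distinct:
\begin{itemize}
\item $w_1,w_2\notin\{u,v\}$ since they lie in $A_u\cup B_{uv}\cup A_v$;
\item $u_1\neq w_1,w_2$ because $u_1$ is a common neighbour of both;
\item $w\neq w_1$ because $w$ is a neighbour of $w_1$, and $w\neq w_2$ by choice;
\item $u_1\neq w$ by choice.
\end{itemize}
So this is a path of length five joining $u$ and $v$, which have equal degree $\beta$. This is a contradiction.

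The only point needing care is the arithmetic in Observation~\ref{1} with $|V(G)|=2n$ rather than $2n+1$. The threshold $2n+4$ (instead of $2n+5$) is exactly what is needed to retain $|N(w_1)\cap N(w_2)|\ge 4$ and $|N(w_1)\cap N(v)|\ge 3$ for the exclusions above, and both bounds hold even in the worst case where the pairs are adjacent.
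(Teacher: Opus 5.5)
Your proof is correct and follows the paper's argument step for step. You pick $w_1$ with $d(w_1)\ge n+2$ and assume $w_2\in N(u)$, apply Observation~\ref{1} to get $|N(w_1)\cap N(v)|\ge 3$ and $|N(w_1)\cap N(w_2)|\ge 4$, and close with the path $u\,w_2\,u_1\,w_1\,w\,v$. Like the paper, you implicitly take $w_1\neq w_2$, which is the intended reading of the claim.
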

\begin{proof}
Suppose there exist two vertices $w_1, w_2 \in A_u \cup B_{uv} \cup A_v$ such that $d(w_1) + d(w_2) \ge 2n + 4$. Since $d(w_1)$ and $d(w_2)$ are integers, at least one of $d(w_1)$ and $d(w_2)$ is at least $n + 2$. Without loss of generality, assume $d(w_1) \ge n + 2$. 

Since $w_2 \in A_u \cup B_{uv} \cup A_v$, we assume without loss of generality that $w_2 \in N(u)$. As $d(w_1) \ge n + 2$ and $d(v) \ge n + 1$, by Observation \ref{1}, we have $|N(w_1) \cap N(v)| \ge 3$. Thus, there exists a vertex $w$ distinct from $w_2, u$ such that $w \in N(w_1) \cap N(v)$. 
Since $d(w_1) + d(w_2) \ge 2n + 4$, by Observation \ref{1}, we have $|N(w_1) \cap N(w_2)| \ge 4$. Therefore, there exists a vertex $u_1$ distinct from $u, v, w$ such that $u_1 \in N(w_1) \cap N(w_2)$. Then $uw_2u_1w_1wv$ is a path of length five with endpoints $u$ and $v$ of equal degree, a contradiction.
\end{proof}
\begin{claim}\label{enn}
    The number of vertices in $A_u \cup B_{uv} \cup A_v$ with degree exactly $n$ is at most two.
\end{claim}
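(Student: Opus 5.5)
Suppose, for a contradiction, that $w_1,w_2,w_3\in A_u\cup B_{uv}\cup A_v$ are distinct and $d(w_i)=n$ for $i=1,2,3$. We follow the template of Claim~\ref{e2.15}, with Claim~\ref{e2.11} (which covers degree $\ge n$) as the starting point. By Claim~\ref{e2.11} the three vertices are pairwise adjacent, so $w_1w_2w_3$ is a triangle. Since they have equal degree, any path of length five between two of them is forbidden, as well as any path of length five between $u$ and $v$. Without loss of generality $w_1\in N(u)$.

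\textbf{Step 1.} We first show that $w_2,w_3\in N(v)$. Suppose $w_2v\notin E(G)$. Observation~\ref{1} applied to $(w_2,v)$ in the $2n$-vertex graph gives
\[
|N(w_2)\cap N(v)|\ge n+(n+c)-2n+2=c+2\ge 3.
\]
If this common neighbourhood contains a vertex $u_1\notin\{w_1,w_3,u\}$, then $uw_1w_3w_2u_1v$ is a forbidden path. Otherwise $N(w_2)\cap N(v)=\{w_1,w_3,u\}$, forcing $uv\in E(G)$ and $w_1,w_3\in N(v)$. In that case we examine $N(w_1)\cap N(w_2)$: any vertex $y$ there outside $\{w_3,u\}$ gives the path $uw_2yw_1w_3v$. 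So $N(w_1)\cap N(w_2)\subseteq\{u,w_3\}$, and Observation~\ref{1} then bounds $|D_{w_1w_2}|$ by about $2$. Since $d(w_3)=n\ge 13$, the vertex $w_3$ has many neighbours in $N(w_1)\cup N(w_2)$. This yields a common neighbour of $w_3$ with $w_1$ or with $w_2$ outside $\{u,v,w_1,w_2\}$, and hence a forbidden path as in Claim~\ref{e2.15}. By symmetry the same argument gives $w_3v\in E(G)$.

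\textbf{Step 2.} With $w_2,w_3\in N(v)$, any $w\in N(w_1)\cap N(w_2)\setminus\{w_3,u,v\}$ gives the forbidden path $uw_1ww_2w_3v$. Hence $|N(w_1)\cap N(w_2)|\le 3$. Observation~\ref{1} with $d(w_1)+d(w_2)=2n$ and $w_1w_2\in E(G)$ gives $|D_{w_1w_2}|\le 3$. The $n-2$ neighbours of $w_3$ other than $w_1,w_2$ lie in $D_{w_1w_2}\cup N(w_1)\cup N(w_2)$, so $w_3$ has at least $(n-5)/2\ge 4$ common neighbours with, say, $w_1$. Choosing one, $u_1$, outside $\{u,v,w_2\}$ gives the path $vw_2w_3u_1w_1u$, a contradiction.

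\textbf{Alternative route.} Equal degrees also give direct contradictions without $u,v$. For instance, $w_1$ and $w_2$ are joined by $w_1uw_3\,\cdot\,\cdot\,w_2$-type paths, and one could use the fact that $w_1,w_2$ are endpoints with $d(w_1)=d(w_2)$ and build $w_1 u\, z\, v\, w_3 w_2$, where $z\in B_{uv}$ is distinct from the $w_i$. This needs only $u\in N(w_1)$, $v\in N(w_3)$ and $|B_{uv}|\ge 4$. I would use this wherever it applies to shortcut the case analysis.

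The main obstacle is Step 1's degenerate subcase where $N(w_2)\cap N(v)$ is exactly $\{w_1,w_3,u\}$. There the counting margin is thin, because the degree here is $n$ rather than $n+1$, so $|D_{w_1w_2}|$ is larger. That is where $n\ge 13$ and careful exclusion of $u,v$ from the chosen common neighbour are needed.
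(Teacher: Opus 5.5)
Granting the triangle $w_1w_2w_3$, your argument is correct. In the degenerate subcase of Step~1 you get $|D_{w_1w_2}|=|N(w_1)\cap N(w_2)|\le 2$. So $w_3$ has at least $n-4\ge 9$ neighbours in $(N(w_1)\cup N(w_2))\setminus\{w_1,w_2\}$, and one of $uw_2w_3u_1w_1v$ or $vw_1w_3u_1w_2u$ is a forbidden path. Step~2 is right as written; the symmetric case is handled by $uw_1w_3u_1w_2v$.

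The route, however, differs from the paper's. You follow the template of Claim~\ref{e2.15}: every forbidden path joins $u$ and $v$, and $d(w_i)=d(w_j)$ is never used. The paper does not invoke Claim~\ref{e2.11} at all.
\begin{itemize}
\item It first finds, by counting alone, a pair (say $w_1,w_2$) with $|N(w_1)\cap N(w_2)|\ge 4$. If $|N(w_1)\cap N(w_2)|\le 3$, then $|D_{w_1w_2}|\le 3$ whether or not $w_1w_2\in E(G)$, so $w_3$ has at least $(n-5)/2\ge 4$ common neighbours with $w_1$ or with $w_2$.
\item For that pair it forces $w_2v\in E(G)$, then $uv\notin E(G)$ (via the path $w_3uvw_2u_1w_1$ between two degree-$n$ vertices), then $w_1w_2\in E(G)$.
\item It closes with $w_3uu_3vw_2w_1$, where $u_3\in N(u)\cap N(v)$ exists because $uv\notin E(G)$ gives $|N(u)\cap N(v)|\ge 4$.
\end{itemize}
This is essentially your ``alternative route'', made to work by first forcing $uv\notin E(G)$. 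It needs only one adjacency among the $w_i$, and it proves that adjacency.

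What the paper's route buys is independence from Claim~\ref{e2.11} at degree exactly $n$, and that is the weak point of your proof. Claim~\ref{e2.11} is stated for degrees $\ge n$, but its proof uses $d(w_1),d(w_2)\ge n+1$ to obtain $|N(v)\cap N(w_2)|\ge 2$ and $|N(w_1)\cap N(w_2)|\ge 4$. It also treats only the case $(N(w_1)\cap N(w_2))\setminus\{u,v\}\neq\emptyset$. For two non-adjacent degree-$n$ vertices, Observation~\ref{1} gives only $|N(w_1)\cap N(w_2)|\ge 2$, and the configuration $N(w_1)\cap N(w_2)=\{u,v\}$, $D_{w_1w_2}=\emptyset$ is never ruled out. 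The rest of the paper needs the claim only for degrees $\ge n+1$ (via Claim~\ref{e2.15}).

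Your own Steps~1--2 point the same way, since they never use $d(w_i)=d(w_j)$. If Claim~\ref{e2.11} held at degree $n$, they would show at most two vertices of $A_u\cup B_{uv}\cup A_v$ have degree $\ge n$. The paper's separate treatment of degree-$n$ vertices in its final count would then be unnecessary, which suggests the paper does not actually have that claim at degree $n$.

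So the triangle is the one load-bearing step of your proof that is not secured. To make the proof self-contained, either prove the adjacencies you use for degree-$n$ vertices directly, exploiting that the $w_i$ are themselves forbidden endpoints, or switch to the pair-first strategy above.
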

\begin{proof}
    Suppose, for contradiction, that there exist three distinct vertices $w_1,w_2,w_3\in A_u\cup B_{uv}\cup A_v$ with $d(w_i)= n$ for $i=1,2,3$.
We first show that there exist $j,k\in\{1,2,3\}$ such that $|N(w_j)\cap N(w_k)|\ge 4$.
If $|N(w_1)\cap N(w_2)|\le 3$, then since $d(w_1)=d(w_2)=n$, by Observation \ref{1} we have $|D_{w_1w_2}|\le 3$.
Since $d(w_3)=n$, it follows that
\(|N(w_3) \setminus (D_{w_1w_2} \cup \{w_1, w_2\})| \ge n - 5.\)
Therefore, together with our assumption that $n \ge 13$, this implies that either $|N(w_3) \cap N(w_1)| \ge \frac{n - 5}{2} \ge 4$ or $|N(w_3) \cap N(w_2)| \ge \frac{n - 5}{2} \ge 4$.
This proves that there exist $j,k\in\{1,2,3\}$ such that $|N(w_j)\cap N(w_k)|\ge 4$.

Without loss of generality, assume $|N(w_1)\cap N(w_2)|\ge 4$.
Then there exist two distinct vertices $u_1,u_2\notin\{u,v\}$ such that $u_1,u_2\in N(w_1)\cap N(w_2)$.
Note that $w_1,w_2\in A_u\cup B_{uv}\cup A_v$, so without loss of generality assume $w_1\in N(u)$.
If $w_2v\notin E(G)$, then by Observation \ref{1} we have $|N(w_2)\cap N(v)|\ge 3$, and thus there exists a vertex $u_3\in N(w_2)\cap N(v)\setminus\{u,w_1\}$.
Since $u_1\neq u_2$, we may assume $u_1\neq u_3$.
Then $u w_1 u_1 w_2 u_3 v$ is a path of length five with $d(u) = d(v)$, a contradiction.
This proves $w_2v\in E(G)$.
If $uv\in E(G)$, then since $w_3\in A_u\cup B_{uv}\cup A_v$ and $u_1\neq u_2$, we may assume without loss of generality that $w_3\in N(u)$ and $w_3\neq u_1$.
Then $w_3 u v w_2 u_1 w_1$ is a path of length five with $d(w_3) = d(w_1)=n$, a contradiction.
This proves $uv\notin E(G)$.
If $w_1w_2\notin E(G)$, then by Observation \ref{1} there exists a vertex $u_3\in N(w_2)\cap N(v)\setminus\{w_1,u\}$.
Since $u_1\neq u_2$, we may assume $u_3\neq u_1$.
Then $u w_1 u_1 w_2 u_3 v$ is a path of length five with $d(u) = d(v)$, a contradiction.
This proves $w_1w_2\in E(G)$.

Since $d(u)=d(v)\ge n+1$ and $uv\notin E(G)$, by Observation \ref{1} we obtain $|N(u)\cap N(v)|\ge 4$.
Thus there exists a vertex $u_3\in (N(u)\cap N(v))\setminus\{w_1,w_2,w_3\}$.
Note that $w_3\in A_u\cup B_{uv}\cup A_v$; without loss of generality assume $w_3\in N(u)$.
Then $w_3 u u_3 v w_2 w_1$ is a path of length five with $d(w_3) = d(w_1)$, a contradiction.
\end{proof}

Let $w_1$ and $w_2$ be the two vertices with the maximum degrees in $A_u \cup B_{uv} \cup A_v$.
By Claims~\ref{e2n+4} and~\ref{e2.15}, it follows that for any vertex $z \in (A_u \cup B_{uv} \cup A_v) \setminus \{w_1, w_2\}$, $d(z) \le n$, and $d(w_1) + d(w_2) \le 2n + 3$.
Let $w_3$ and $w_4$ be the two vertices with the maximum degrees in $(A_u \cup B_{uv} \cup A_v) \setminus \{w_1, w_2\}$; then $d(w_3) + d(w_4) \le 2n$.
Furthermore, by Claim~\ref{enn}, we obtain that for any vertex $w \in (A_u \cup B_{uv} \cup A_v) \setminus \{w_1, w_2, w_3, w_4\}$, $d(w) \le n - 1$.
Thus, we have
\begin{align}
    \sum_{v \in A_u \cup B_{uv} \cup A_v} d(v) &\le (2n - 2 - |D_{uv}| - 4)(n - 1) + d(w_1) + d(w_2) + d(w_3) + d(w_4) \notag\\&
    \le (2n - 2 - |D_{uv}| - 4)(n - 1)+4n+3 \notag\\
    &\le 2n^2 + (2c - x - 2 - 2\cdot\mathbf{1}_{uv})n + x - 2c + 7 + 2\cdot\mathbf{1}_{uv}.\label{aba}
\end{align}
By Claim \ref{e2.10}, every vertex in $D_{uv}$ has degree at most $n-1$  if $G$ is not $K_{n-1,n+1}$. Thus,
\begin{align}
    \sum_{v\in D_{uv}} d(v) \le |D_{uv}| \,( n-1) .\label{d}
\end{align}
Now, using \eqref{aba}, \eqref{d}, and $d(u) = d(v) = n + c$, we can derive that
\begin{align*}
\sum_{z \in V(G)} d(z) &= \sum_{z \in A_u \cup B_{uv} \cup A_v} d(z) + \sum_{z \in D_{uv}} d(z) + d(u) + d(v) \\
&\le 2n^2 - 2n + 2c + 9.
\end{align*}
Since $2e(G) \ge 2(n^2 -1)$, we deduce
\(2c+11-2n\ge 0.\)
By our assumption that $n \ge 13$, and since $c$ is an integer, it follows that $c \ge \frac{2n-11}{2} > 7$.
By Observation \ref{1}, we have $x = |B_{uv}| \ge 16$.
Then by Lemma~\ref{dudv}, for any $w \in B_{uv}$, we have $d(w) \le 2n - (n + c) + 3 = n - c + 3 < n-1$.
Thus, we obtain
\begin{align}
\sum_{v \in A_u \cup B_{uv} \cup A_v} d(v)
&\le |B_{uv}|(n - c + 3) + (2n - 6 - |B_{uv}| - |D_{uv}|)(n - 1) + d(w_1) + d(w_2) + d(w_3) + d(w_4) \notag\\
&\le 2n^2 + (2c - 2 - 2\cdot\mathbf{1}_{uv} - x)n + 2\cdot\mathbf{1}_{uv} + (5 - c)x - 2c + 7.\label{2aba}
\end{align}
Now, using \eqref{d}, \eqref{2aba}, and $d(u) = d(v) = n + c$, we can derive that
\begin{align*}
\sum_{v \in V(G)} d(v) &= \sum_{v \in A_u \cup B_{uv} \cup A_v} d(v) + \sum_{v \in D_{uv}} d(v) + d(u) + d(v) \\
&\le 2n^2 - 2n + (4 - c)x + 2c + 9 \\
&\le 2n^2 - 2n + 2c - 10.
\end{align*}
The last inequality holds as $c > 7$ and $x \ge 16$.
Since $\sum_{v\in V(G)} d(v) = 2e(G) \ge 2(n^2 - 1)$, we have
\(
2c - 2n - 8 \ge 0.
\)
This implies $c > n$, but note that $d(u) = d(v) = n + c$, which is impossible.
This completes the proof of Lemma~\ref{ebeta 1/2}.
\end{proof}

\subsection{Proof of Lemma \ref{ebeta n+2}}
In this subsection, we prove Lemma \ref{ebeta n+2}. We first give the following easy bounds for $\beta$.
\begin{lem}\label{ebeta=4}
We have $5\leq \beta\leq \Delta$.
\end{lem}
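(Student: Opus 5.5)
The statement is Lemma~\ref{ebeta=4}: in the even setting, with $G$ on $2n$ vertices, $n\ge 13$ and $e(G)\ge n^2-1$, we have $5\le\beta\le\Delta$. I would copy the argument of Lemma~\ref{beta=4}, changing only the vertex count and the edge threshold.

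The upper bound $\beta\le\Delta$ is immediate. By definition, $\beta$ is the degree of some vertex, and no vertex has degree exceeding $\Delta$.

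For the lower bound I argue by contradiction and assume $\beta\le 4$. Then every degree value greater than $4$ is taken by at most one vertex. To bound $\sum_{v}d(v)$, I split the $2n$ vertices into two groups.

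\begin{itemize}
\item The vertices of degree at least $5$ have pairwise distinct degrees in $\{5,\dots,2n-1\}$. There are at most $2n-5$ of them, and their degree sum is at most $\sum_{k=5}^{2n-1}k$.
\item The vertices of degree at most $4$ each contribute at most $4$.
\end{itemize}

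To maximise the total, every value $5,\dots,2n-1$ should be used exactly once. This leaves $5$ vertices, each contributing at most $4$. Replacing a high-degree vertex by a low-degree one only lowers the total, so this configuration is extremal. Hence
\[
2e(G)=\sum_{v\in V(G)}d(v)\le 4\cdot 5+\sum_{k=5}^{2n-1}k = 20+\frac{(2n-1)2n}{2}-10 = 2n^2-n+10 .
\]
This is less than $2n^2-2$ whenever $n>12$. So for $n\ge 13$ it contradicts $e(G)\ge n^2-1$, and therefore $\beta\ge 5$.

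There is no real obstacle here. The only step needing care is the extremal-sum bound: I would state it cleanly, either as the one-line greedy argument above or as a direct appeal to Lemma~\ref{lambda}-style reasoning.
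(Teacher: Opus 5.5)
Your proof is correct and is essentially the paper's argument: $\beta\le\Delta$ is immediate, and assuming $\beta\le 4$ bounds the degree sum by $4\cdot 5+\sum_{k=5}^{2n-1}k$, which contradicts $e(G)\ge n^2-1$. Your evaluation $2n^2-n+10$ is the correct value (the paper's displayed $2n^2-n+6$ is an arithmetic slip), and it shows that the hypothesis $n\ge 13$ is exactly the threshold this step requires.
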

\begin{proof}[{\bf Proof}]
The upper bound $\beta\le\Delta$ is immediate from the definitions. For the lower bound, assume $\beta\le4$. Then every degree greater than $4$ appears at most once, so the degree sum satisfies
\[
2e(G)=\sum d(v)\le 4\cdot5+\sum_{k=5}^{2n-1}k = 20+\frac{(4+2n)(2n-5)}{2}=2n^2-n+6<2n^2-2,
\]
where the last inequality holds for $n \ge 13$. This leads to a contradiction as $e(G)\ge n^2 -1$ edges.
\end{proof}

\begin{proof}[{\bf Proof Lemma \ref{ebeta n+2}}]
    Assume, for the sake of contradiction, that $\beta \le \Delta-4$ and $\Delta \ge n+4$.
Then there is a unique vertex $v_0$ of degree $\Delta$. Partition its neighborhood into two parts:
\[
A := \{v\in N(v_0) \mid d(v) \ge 2n-\Delta+4\},\qquad 
B := \{v\in N(v_0) \mid d(v) \le 2n-\Delta+3\}.
\]
If two vertices of $A$ had the same degree, then by Lemma~\ref{dudv} we would have
\[
\Delta = d(v_0) \le 2n - (2n-\Delta+4) + 3 = \Delta-1,
\]
a contradiction. Hence all vertices of $A$ have distinct degrees. The possible degrees for a vertex of $A$ lie in the interval $[2n-\Delta+4,\;\Delta-1]$, which contains exactly $2\Delta-2n-4$ integers. Consequently,
\[
|A| \le 2\Delta-2n-4.
\]
Since $|A| = \Delta - |B|$, we obtain
\begin{align}\label{e|B|ge}
    \Delta - |B| \le 2\Delta-2n-4 \quad\Longrightarrow\quad |B| \ge 2n-\Delta+4.
\end{align}

Let $\overline{N}(v_0) = V(G)\setminus(\{v_0\}\cup N(v_0))$ and note that $|\overline{N}(v_0)| = 2n-1-\Delta$. We aim to bound $\sum_{v\in V(G)} d(v)$ from above. The sum is split as
\begin{align}\label{esumv}
\sum_{v\in V(G)} d(v) = \Delta + \sum_{v\in A\cup\overline{N}(v_0)} d(v) + \sum_{v\in B} d(v).
\end{align}
The vertices of $B$ satisfy $d(v) \le 2n-\Delta+3$ by definition, so $\sum_{v\in B} d(v) \le |B|(2n-\Delta+3)$.

To bound $\sum_{v\in A\cup\overline{N}(v_0)} d(v)$, we maximise it under the following constraints:
\begin{itemize}
    \item[(a)] vertices of $A$ have pairwise distinct degrees;
    \item[(b)] no two vertices in $A\cup\overline{N}(v_0)$ share a degree greater than $\beta$.
\end{itemize}
Let $\lambda(2n,\Delta,\beta,|B|)$ denote the maximum possible sum of degrees for the $2n-1-|B|$ vertices in $A\cup\overline{N}(v_0)$. The degrees are chosen from $\{1,2,\dots,\Delta-1\}$. By Lemma \ref{lambda}, we obtain
\begin{align}\label{esumA}
\sum_{v\in V(G)} d(v) \le\lambda(2n, \Delta, \beta, |B|)
= \sum_{k=|B|}^{\Delta-1}k
+ \big(2n-1-\Delta\big)\beta
+ \binom{|B|-\beta}{2}
- \binom{(|B|-\beta)-(2n-1-\Delta)}{2}.
\end{align}

In what follows, we proceed our proof by distinguishing three cases according to the value of $|B|-\beta$.

\bigskip
{\bf Case 1}: $|B| - \beta\le 0$.
\medskip

By \eqref{esumv} and \eqref{esumA}, we obtain
\[
\sum_{v\in V(G)} d(v) \le \Delta + \sum_{k=|B|}^{\Delta-1} k + (2n-1-\Delta)\beta + |B|(2n-\Delta+3).
\]
The right‑hand side is a quadratic function in \(|B|\):
\[
f_1(|B|) = -\frac12|B|^2 + \Bigl(2n+\frac72-\Delta\Bigr)|B| + (2n-1-\Delta)\beta + \frac12\Delta^2 + \frac12\Delta.
\]
By \eqref{e|B|ge}, \(|B| \ge 2n-\Delta+4\). Since \(f_1\) is decreasing for \(|B| \ge 2n-\Delta+\frac72\), we have
\[
\sum_{v\in V(G)} d(v) \le f_1(2n-\Delta+4) = 2n^2+7n-2n\Delta+\Delta^2-3\Delta + (2n-1-\Delta)\beta +6.
\]
Using \(2e(G) \ge 2n^2-2\) yields
\begin{align}\label{ecase1}
    (2n-1-\Delta)\beta - 2n\Delta +7n + \Delta^2 -3\Delta +8 \ge 0.
\end{align}
If \(2n-1-\Delta =0\) then \eqref{ecase1} forces \(n\le12\), contradicting \(n\ge13\); thus \(2n-1-\Delta\ge1\) and
\[
\beta \ge  \frac{2n\Delta+3\Delta-7n-\Delta^2-8}{2n-1-\Delta} =\Delta-4 + \frac{n-12}{2n-1-\Delta} > \Delta-4,
\]
contradicting \(\beta\le\Delta-4\) for $n\ge 13$.

\bigskip
{\bf Case 2}:  $|B|-\beta\ge (2n-1-\Delta)+1$
\medskip

By \eqref{esumv} and \eqref{esumA}, we obtain
\begin{align*}
\sum_{v\in V(G)}  d(v) &\le \Delta + \sum_{k=|B|}^{\Delta-1}k
+ \big(2n-1-\Delta\big)\beta
+ \binom{|B|-\beta}{2}
- \binom{(|B|-\beta)-(2n-1-\Delta)}{2} + |B|(2n-\Delta+3)
\\& = -\frac12|B|^2 + \Bigl(4n-2\Delta+\frac52\Bigr)|B| -2n^2+2n\Delta + n.
\end{align*}
Treating this as a quadratic \(f_2(|B|)\), its maximum (for real \(|B|\)) occurs at \(|B| = 4n-2\Delta+\frac52\). Taking the nearest integer \(|B| = 4n-2\Delta+2\) gives an upper bound:
\begin{align}\label{ECASE2}
    \sum_{v\in V(G)} d(v) \le f_2(4n-2\Delta+2) = 2\Delta^2 - (6n+5)\Delta + 6n^2+11n+3.
\end{align}

From \(2e(G) \ge 2n^2-2\) we obtain
\[
2\Delta^2 - (6n+5)\Delta + 4n^2+11n+5 \ge 0.
\]
Solving this quadratic inequality in \(\Delta\) yields
\[
\Delta \le \frac{6n+5 - \sqrt{4n^2-28n-15}}{4} \quad\text{or}\quad \Delta \ge \frac{6n+5 + \sqrt{4n^2-28n-15}}{4}.
\]
The second lower bound exceeds \(2n-1\) (impossible), while the first upper bound is less than \(n+4\) for \(n\ge 9\), contradicting \(\Delta\ge n+4\).

\bigskip
{\bf Case 3}: $1\le |B|-\beta \le  (2n-1-\Delta)$.
\medskip

By \eqref{esumv} and \eqref{esumA}, we obtain
\begin{align}\label{ecase3}
    \sum_{v\in V(G)} d(v) &\le\Delta + \sum_{k=|B|}^{\Delta-1}k
+ \big(2n-1-\Delta\big)\beta
+ \binom{|B|-\beta}{2}
- \binom{(|B|-\beta)-(2n-1-\Delta)}{2} + |B|(2n-\Delta+3)\notag
\\& =\Delta + \sum_{k=\beta+1}^{\Delta-1} k + (2n-\Delta+\beta)\beta + |B|\bigl(2n-\Delta+3-\beta\bigr).
\end{align}

We distinguish two subcases according to the sign of \(2n-\Delta+3-\beta\).

\bigskip
{\bf Subcase 3a}: $2n-\Delta+3-\beta \ge 0$.
\medskip

The coefficient of \(|B|\) in \eqref{ecase3} is non‑negative, so the bound increases with \(|B|\). Using $|B|\le\Delta$ gives
\[
\sum_{v\in V(G)} d(v) \le \Delta + \sum_{k=\beta+1}^{\Delta-1} k + (2n-\Delta+\beta)\beta + \Delta(2n-\Delta+3-\beta).
\]
Simplifying (as detailed in the main text) yields a quadratic in \(\beta\):
\[
f_3(\beta) = \frac12\beta^2 + \Bigl(2n-2\Delta-\frac12\Bigr)\beta + 2n\Delta - \frac12\Delta^2 + \frac72\Delta.
\]
By Lemma~\ref{ebeta=4}, \(\beta\ge5\); the subcase condition implies $\beta\le2n-\Delta+3$. Since $f_3$ is convex, its maximum on $[5,\,2n-\Delta+3]$ occurs at an endpoint. Evaluating at the lower endpoint gives
\[
f_3(5) = -\frac12\Delta^2 + \Bigl(2n-\frac{13}2\Bigr)\Delta + 10n+10,
\]
and the inequality $2e(G)\ge2n^2-2$ would require 
\[
-\frac{1}{2}\Delta^2 + (2n-\frac{13}{2})\Delta - 2n^2 + 10n + 12 \ge 0,
\]
View the expression above as a quadratic polynomial in the variable \(\Delta\).
Then its discriminant
$\left(2n - \frac{13}{2}\right)^2 - 2(2n^2 - 10n - 12) = -6n + \frac{265}{4}$
must be non-negative, which is a contradiction for $n\ge12$. This is a contradiction. Hence \(f_3(5)\) cannot achieve the required lower bound, so the maximum must be at the other endpoint:
\[
f_3(2n-\Delta+3) = 2\Delta^2 - (6n+5)\Delta + 6n^2+11n+3,
\]
which is exactly \eqref{ECASE2}. The same analysis as in Case~2 shows that this expression is less than \(2n^2-2\) for \(\Delta \ge n+4\), leading to a contradiction.

\bigskip
{\bf Subcase 3b}: \(2n-\Delta+3-\beta < 0\).
\medskip

Now the coefficient of \(|B|\) in \eqref{ecase3} is negative, so the bound decreases with \(|B|\). Taking the smallest possible \(|B| = \beta+1\) yields
\[
\sum_{v\in V(G)} d(v) \le \Delta + \sum_{k=\beta+1}^{\Delta-1} k + (2n-\Delta+\beta)\beta + (\beta+1)(2n-\Delta+3-\beta).
\]
Simplifying gives another quadratic in \(\beta\):
\begin{align}\label{ecase3b}
    f_4(\beta) = -\frac12\beta^2 + \Bigl(4n-2\Delta+\frac32\Bigr)\beta + \frac12\Delta^2 - \frac12\Delta + 2n + 3.
\end{align}
This concave parabola attains its maximum at $\beta_0 = 4n-2\Delta+\frac52$. For integer $\beta$, the maximum value is achieved at $\beta = 4n-2\Delta+2$ or $4n-2\Delta+3$; substituting \(\beta = 4n-2\Delta+2\) gives
\[
f_4(4n-2\Delta+2) = \frac52\Delta^2 - \Bigl(8n+\frac72\Bigr)\Delta + 8n^2+8n+4.
\]
Using $2e(G)\ge2n^2-2$ we obtain
\[
\frac52\Delta^2 - \Bigl(8n+\frac72\Bigr)\Delta + 6n^2+8n+6 \ge 0.
\]
Solving this quadratic inequality in $\Delta$ yields either
\[
\Delta \ge \frac{16n+7 + \sqrt{16n^2-96n-191}}{10} > 2n-1
\quad\text{or}\quad
\Delta \le \frac{16n+7 - \sqrt{16n^2-96n-191}}{10} < \frac{4}{3}n + \frac{11}{6}.
\]
The first is impossible; the second gives \(\Delta < \frac{4}{3}n + \frac{11}{6}\). Together with \(\beta \le \Delta-4\) we have \(\beta \le \Delta-4 < 4n-2\Delta+\frac32\). Now substitute \(\beta = \Delta-4\) into \eqref{ecase3b}:
\[
f_4(\Delta-4) = -2\Delta^2 + (4n+13)\Delta -14n -11.
\]
The degree sum must satisfy
\[
-2\Delta^2 + (4n+13)\Delta -14n -11 \ge 2n^2-2,
\]
i.e.,
\[
-2\Delta^2 + (4n+13)\Delta -2n^2 -14n -9 \ge 0.
\]
The discriminant of the left‑hand side is $(4n+13)^2 - 8(2n^2+14n+9) = 97 - 8n$, which is negative for $n\ge13$. This is a contradiction.

%All cases lead to a contradiction, so our initial assumption was false. Hence either \(\beta \ge \Delta-3\) or \(\Delta \le n+3\).
Combining the above three cases, we conclude that our initial assumption is false. Hence, either \(\beta \ge \Delta-3\) or \(\Delta \le n+3\), completing the proof of Lemma \ref{ebeta n+2}.
\end{proof}

\end{document}